%%%%%%%%%%%%%%%%%%%%%%%%%%%%%%%%%%%%%%%%%%%%%%%%%%%%%%%%%%%%%%%%%%%%%%%%%%%%
%% Trim Size: 9.75in x 6.5in
%% Text Area: 8in (include Runningheads) x 5in
%% ws-ijnt.tex   :   10-10-2007
%% Tex file to use with ws-ijnt.cls written in Latex2E.
%% The content, structure, format and layout of this style file is the
%% property of World Scientific Publishing Co. Pte. Ltd.
%% Copyright 1995, 2002 by World Scientific Publishing Co.
%% All rights are reserved.
%%%%%%%%%%%%%%%%%%%%%%%%%%%%%%%%%%%%%%%%%%%%%%%%%%%%%%%%%%%%%%%%%%%%%%%%%%%%
%%

\documentclass[12pt]{amsart}

\usepackage{amsmath, amssymb, graphics, setspace}
%\d{\refname}{\hfil References Cited\hfil}
%\renewcommand{\refname}{References Cited}
\usepackage[usenames]{color}
\input xypic

\usepackage{algorithm}
\usepackage[noend]{algpseudocode}

\algblock{Input}{EndInput}
\algnotext{EndInput}
\algblock{Output}{EndOutput}
\algnotext{EndOutput}
\newcommand{\Desc}[2]{\State \makebox[2em][l]{#1} #2}

\renewcommand*\Call[2]{\textproc{#1}(#2)}

\makeatletter
\def\BState{\State\hskip-\ALG@thistlm}
\makeatother

%%       Adding, deleting and comments
% Uses Comment.sty version 3.4, April 1998, by Victor Eijkhout
%\usepackage{comment}
% usage:
% \begin{commentY}
	%    ...
	% \end{commentY}
%\newenvironment{commentY}{\begingroup \sffamily [{\bf Yuval:}}{]\endgroup}
%\newenvironment{commentM}{\begingroup \sffamily [{\bf Misha:}}{]\endgroup}

\usepackage{verbatim}
\usepackage{url}
\usepackage{bm}
%\usepackage[utf8]{inputenc}
%\inputencoding{latin1}
%\inputencoding{utf8}
\usepackage{tikz}
\usepackage{algorithm}
\usepackage[noend]{algpseudocode}
\renewcommand*\Call[2]{\textproc{#1}(#2)}
\newcommand{\mathsym}[1]{{}}
\newcommand{\unicode}[1]{{}}

\makeatletter
\def\BState{\State\hskip-\ALG@thistlm}
\makeatother

%%%%%%%%%%%%%%%%%%%%%%%%%%%%%
%\pagestyle{plain}
%%%%%%%%%% EXACT 1in MARGINS %%%%%%%
\setlength{\textwidth}{6.5in}     %%
\setlength{\oddsidemargin}{0in}   %%
\setlength{\evensidemargin}{0in}  %%
\setlength{\textheight}{8.5in}    %%
\setlength{\topmargin}{0in}       %%
\setlength{\headheight}{0.2in}      %%
\setlength{\headsep}{0.2in}         %%
\setlength{\footskip}{.5in}       %%
%%%%%%%%%%%%%%%%%%%%%%%%%%%%%

%\renewcomman
\bibliographystyle{plain}                                              %%
%%%%%%%%%%%%%%%%%%%%%%%%%%%%%

%\usepackage{geometry}
%\geometry{left=1in,right=1in,top=1in,bottom=1in}

\usepackage{amsmath}
\usepackage[centertags]{amsmath}
\usepackage{amsfonts}
\usepackage{amssymb}
\usepackage{amsthm}
\usepackage{newlfont}
\usepackage{url}
\usepackage{bm}
\usepackage{chngcntr}
\theoremstyle{plain}
\newtheorem{thm}{Theorem}
\newtheorem{cor}[thm]{Corollary}

\newtheorem{lem}[thm]{Lemma}
\newtheorem{lem*}[thm]{Lemma}
\newtheorem{prop}[thm]{Proposition}

\theoremstyle{definition}

\theoremstyle{remark}
\newtheorem{rem}{Remark}
\newtheorem{rem*}{Remark}

\newtheorem{example}[rem]{Example}

%\numberwithin{}{subsection}
\numberwithin{rem}{section} %{subsection}
\numberwithin{dfn}{section} %{subsection}
\numberwithin{equation}{section} %{subsection}
\numberwithin{thm}{section} %{subsection}

\def\!{\operatorname{!}}

\def\F{\mathbb F}
\def\G{\mathbb G}

\def\1{\bold 1}

\def\deg{\operatorname{deg}}

\def\Hom{\operatorname{Hom}}

\def\Ext{\operatorname{Ext}}

 %{\underline}

\def\rk{\operatorname{rk}}

\def\deg{\operatorname{deg}}

\def\End{\operatorname{End}}

\usepackage{amssymb}

%% Optional, but useful:
\usepackage{enumitem}

%% Add only when there are figures:
\usepackage{graphicx}
\usepackage{xy}
\usepackage{float}

\makeatletter
\@namedef{subjclassname@2020}{%
	\textup{2020} Mathematics Subject Classification}
\makeatother

%\usepackage{authblk}

%% If you are using letters of the Polish alphabet, add 
\usepackage[T1]{fontenc}

%% Numbered objects of "theorem" style (text italicized).
%% Below, the optional parameters indicate that all objects are numbered together, and "by section".
%% However, you are welcome to use any other numbering system of your choice, as well as your own abbreviations.

\newtheorem{theorem}{Theorem}[section]

%\newtheorem{problem}[theorem]{Problem}

%% A numbered theorem with a fancy name:

%% Numbered objects of "non-theorem" style (text roman):

\theoremstyle{definition}
\newtheorem{definition}[theorem]{Definition}
\newtheorem{remark}[theorem]{Remark}
%\newtheorem{example}[theorem]{Example}

%% An unnumbered object:

\usepackage[foot]{amsaddr}
%% Equations numbered by section (optional):

\numberwithin{equation}{section}

%%%%%%%%%%% For IMPAN journals:

\frenchspacing

\textwidth=13.5cm
\textheight=23cm
\parindent=16pt
\oddsidemargin=-0.5cm
\evensidemargin=-0.5cm
\topmargin=-0.5cm

%%%%%%%%%%%%%%%%%%%%%%%%%%%%%%%%%%%
%%%%%%%%%%%%%%%%%%%%%%%%%%%%%%%%%%%

%%%%%%%%%%%%%

\newcommand{\Der}{\mathrm{Der} }

\newcommand{\Derin}{\Der_{in}}
\newcommand{\up}[2]{#1^{(#2)}}

\newcommand{\lra}{\longrightarrow}

\newcommand{\podwzorem}[2]{\underbrace{#1}\limits_{#2}}
\newcommand{\nadwzorem}[2]{\overbrace{#1}\limits^{#2}}

\newcommand{\uplra}[1]{\stackrel{#1}{\lra}}

\begin{document}

	\baselineskip=17pt
	
	%%%%%%%%%%%%%%%%
	
	\title[Algorithms for determination \dots]{Algorithms for determination of $\mathbf{T}$-module structures on some  extension  groups   }

\author[F. G{\l}och]{Filip G{\l}och$^{1}$}
\address{$^1$ Institute of Mathematics, Department of Exact and Natural Sciences, University of Szczecin, ul. Wielkopolska 15, 70-451 Szczecin, Poland }\email{filip.gloch@gmail.com}
\address{$^{\ast}$Corresponding author}
\author[D.E. K{\k e}dzierski]{Dawid E. K{\k e}dzierski $^{1}$}\email{dawid.kedzierski@usz.edu.pl}
\author[P. Kraso{\'n}]{Piotr Kraso{\'n} $^{1,{\ast}}$}\email{piotrkras26@gmail.com}

	\date{\today}
	
	%\address{  Institute of Mathematics, Department of Exact and Natural Sciences, University of Szczecin, ul. Wielkopolska 15, 70-451 Szczecin, Poland 
	%}
%	\email{filip.gloch@gmail.com}

		\subjclass[2020]{11G09, 11R58,18G50}
	
\keywords{Drinfeld modules, Anderson $t-$modules, group of extensions, Hom-Ext exact sequence, biderivations, composition series,  {\bf t}-reduction algorithm}
\thanks{}

\maketitle

\newcommand{\tm}{$\mathbf{t}-$}
\newcommand{\tsm}{$\mathbf{t}^{\sigma}-$}
\newcommand{\functor}{\mathrm{Rep}}

%%%%%%%%%%%%%%%%%%%%%%%
%%%%%%%%%%%%%%%%%%%%%
%%%%%%%%%%%%%%%%%%%%%% - nowa komenda dla rangi
\renewcommand{\rk}{\mathrm{deg}_\tau}

%\begin{document}

\begin{abstract}
In  their recent work the second and the third authors extended the methods of  M.A. Papanikolas  and N. Ramachandran and determined the {\bf t}-module structure 
on $\Ext^1(\Phi,\Psi )$ where $\Phi $ and $\Psi$ were Anderson {\bf t}-modules   over $A={\mathbf F}_q[t]$ of some specific types.
This approach involved the concept of biderivation and certain reduction algorithm.
In this paper  we generalize  these results   and 
present a complete algorithm for computation of {\bf t}-module structure on $\Ext^1(\Phi,\Psi )$ 
for \tm modules $\Phi $ and $\Psi$ 
such that  $\deg_{\tau} \Phi >  \deg_{\tau} \Psi.$ The last condition is not sufficient for our algorithm to be executable. We show that it can be applied when the matrix at the biggest power of $\tau$ in $\Phi_t$ is invertible.
We also introduce a notion of  ${\tau}$-composition series which we find suitable for the additive category of {\bf t}-modules and show 
that  under certain assumptions on the composition series of $\Phi $ and $\Psi$ our algorithm is also executable. In this version we added a Corrigendum the end of the paper.

\end{abstract}

\section{Introduction}

Let $A={\mathbb F}_q[t]$ be a polynomial ring over a finite field ${\mathbb F}_q$ with $q$-elements 
and let $k={\mathbb F}_q(t)$ be the quotient field of $A.$
Then   $k$ can be viewed as the function field of the curve ${\mathbb P}^1/{\mathbb F}_q$ and $A$ as the ring of functions regular outside $\infty$.
Let  $v_{\infty}: k\rightarrow {\mathbb R}\cup\{\infty\}$ be the normalized
valuation associated to $\frac{1}{t}$ (i.e. $v_{\infty}(\frac{1}{t})=1$). Let $K$  be a completion of $k$ with respect to $v_{\infty}$ and let $\overline K$ be its fixed  algebraic closure. It turns out that ${\overline K}$ is neither complete nor 
locally compact. Denote also by $\overline{v_{\infty}}$ the extension of $v_{\infty}$ to $\overline K.$
Let  ${\mathbb C}_{\infty}$ be the completion of $\overline K$ with respect to the metric induced by  $\overline{v_{\infty}}.$

One of the major developments in the theory of curves over finite fields was the work of L. Carlitz \cite{c35} where
a  remarkable {\it exponential} function  $e_C : {\mathbb C}_{\infty}\rightarrow  {\mathbb C}_{\infty}$ was defined.
This function is an analytic function on the characteristic $p$-space ${\mathbb C}_{\infty},$ and besides the properties analogous to the usual exponential function satisfies the functional equation $e_C(az)=C_a(e_C(z))$ where 
$a\in{\mathbb F}_q[t]$ and $C_a$ is an additive polynomial. One has $C_{ab}(z)=C_{a}(C_b(z))=C_{b}(C_a(z))$
and $C_{a}(z_1+z_2)=C_{a}(z_1)+C_{a}(z_2).$  The correspondence $a\rightarrow C_a(z)$ is called the Carlitz module. 
D. Hayes \cite{h74} proved  that the division values of $e_C(z)$ generate abelian extensions of $k$ analogous to the cyclotomic extensions defined by the division points of $e^{2{\pi}iz}.$
In \cite{d74} V.G. Drinfeld extended the theory of the Carlitz  (rank 1) exponentials to arbitrary rank $d.$
Such exponentials 
give rise to objects which are now called Drinfeld modules. The theory of  moduli spaces of shtukas (which are generalizations of Drinfeld modules) constructed by V.G. Drinfeld played a major role in proving the version of Langlands correspondence for $GL_r$ of function fields in finite characteristic  \cite{l04},\cite{la02}. 
Additionally, D. Hayes \cite{h80} developed  explicitly a class field theory for function fields.
Drinfeld modules and its generalizations \tm modules defined by G. Anderson \cite{a} are objects of intensive studies.
For the remarkable results obtained in this area the reader is advised to consult excellent sources e.g. \cite{th04}, \cite{g96}, \cite{f13}, \cite{bp20}, \cite{ge86}, \cite{MP}. 
The category of \tm modules is an additive category. Basic structures of this category such as $\Hom_{\tau}$-sets 
or extension groups deserve thorough investigation. In \cite{kp22} an algorithm for determination of an endomorphism 
ring of a Drinfeld module was developed.
 In  \cite{pr} a \tm module structure  for $\Ext^1_{\tau}(\phi,C)$, where $\phi$ is a Drinfeld module of rank bigger than $1$  and $C$ is the   Carlitz module, was determined.
The key idea relied on representation of the ${\mathbb F}_q$-space $\Ext^1_{\tau}$ as the quotient of the space 
of biderivations modulo the subspace of inner biderivations. Then the condition  ${\mathrm{rk}}\phi >{\mathrm{rk}} C$ on a Drinfeld module allowed the authors of 
\cite{pr} to construct a recursive algorithm for computing \tm module structure on $\Ext^1_{\tau}(\phi, C).$
The application of essentially the same algorithm allowed them to determine the structure of a \tm module on the space 
$\Ext^1_{\tau}(C^{\otimes m}, C^{\otimes n})$ for $n>m$. 
In  \cite{kk04} the second and third author  generalized this algorithm and showed that for some specific \tm modules $\Phi$ and $\Psi$ under the assumption 
$\rk\Phi>\rk\Psi$   the spaces $\Ext^1_{\tau}(\Phi,\Psi)$ have   \tm module structures. The content of this paper is the following:  In section \ref{sec2} we give basic definitions and facts concerning \tm modules, their morphisms etc. Then we describe $\Ext^1_{\tau}$ as the quotient  of the space of biderivations modulo the subspace of inner biderivations. We  quickly recall the six-term $\Hom_{\tau} - \Ext_{\tau}$ exact sequence existence of which was  proved in \cite{kk04}. We  also describe the duality functor as defined in \cite{g95} (cf. \cite{kk04}). 

 In section \ref{sec 3} we present an algorithm, which we call  \tm reduction algorithm, for computation of a \tm module structure for  $\Ext^1_{\tau}(\Phi,\Psi),$ where
$\Phi, \Psi$ are \tm modules, $\rk\Phi>\rk\Psi$.  The \tm reduction algorithm is a generalization of the methods of 
computation presented both in \cite{pr} and \cite{kk04}.
The  condition $\rk\Phi>\rk\Psi$ is not sufficient for the \tm reduction algorithm to be executable therefore we introduce the notion of a \tm module structure coming from \tm reduction. 
In section \ref{sec4} we show that if $\rk\Phi>\rk\Psi$ and the matrix $A_n$ at the highest power of $\tau$ in ${\Phi}_t$ is invertible then 
$\Ext^1_{\tau}(\Phi,\Psi)$ has a \tm module structure coming from \tm reduction. We  then present an appropriate pseudo-code. In section \ref{sec5} we introduce for \tm modules some new notions: we define $\tau$-simplicity and $\tau$-composition series. Then we show that our algorithm is executable for $\Ext_{\tau}^1(\Phi, \Psi)$ where  $\Phi$ and $\Psi$ are given 
 by  composition series  having  Drinfeld modules as consecutive sub-quotients which satisfy   certain conditions on degrees i.e. in this case the \tm module structure 
 on $\Ext_{\tau}^1(\Phi, \Psi)$ comes from \tm reduction. Then we present a  pseudo-code for this situation.
 This essentially enlarges the class of \tm modules for which the structure of a \tm module on $\Ext_{\tau}^1(\Phi, \Psi)$ 
 can be effectively computed.
 For theoretical purposes it might be also useful to have exact formulas. This in general  is a very tedious task.
 We give the exact formulas for the case of $\Ext^1_{\tau}(\phi,\psi)$ for two Drinfeld modules with $\rk\phi>\rk\psi$. 
 We also derive some consequences from them which cannot be obtained directly from the 
   algorithm. Finally, in the appendix we give an implementation of our pseudo-codes in Mathematica 13.2 and compute 
   two examples.
   
   \section{Preliminaries}\label{sec2}
 In this section we give basic definitions and properties concerning Drinfeld modules and \tm modules.

 Let $p$ be a rational prime and $A={\mathbb F}_q[t]$  the polynomial ring over the finite field with $q=p^m$ elements.
\begin{definition}\label{def1}
An $A$-field $K$ is a fixed morphism  
 ${\iota}: A \rightarrow K$. The kernel of $\iota$ is a prime ideal $\cal P$ of $A$ called the characteristic. The characteristic of $\iota$ is called finite if ${\cal P}\neq 0,$ or generic (zero) if ${\cal P}= 0.$
\end{definition}
Let $\G_{a,K}$ be the additive algebraic group over $K.$
Then the endomorphism ring ${\End}(\G_{a,K})$ is the skew polynomial ring $K\{\tau\}.$ The endomorphism $\tau$ is 
the map $u\rightarrow u^q$ and therefore one has the commutation relation $\tau u=u^q\tau$ for $u\in K.$
\begin{definition}\label{def2}
A Drinfeld $A$-module is a  homomorphism $\phi : A\rightarrow K\{\tau\},$ $ a\rightarrow {\phi}_{a}\,,$ of ${\mathbb F}_{q}$-algebras such that 
\begin{enumerate}
\item[1.] $D\circ {\phi}={\iota},$
\item[2.] for some $a\in A, \, {\phi}_{a}\neq {\iota}(a){\tau}^{0},$
\end{enumerate}
where $D({\sum}_{i=0}^{i={\nu}}\,\,a_{i}{\tau}^{i})=a_{0}. $ The characteristic of a Drinfeld module is the characteristic of ${\iota}.$
\end{definition}
There exists a generalization of the notion of a Drinfeld module called a \tm module. This was developed by G. Anderson in \cite{a}.
\begin{definition}
An $m$-dimensional \tm module over $K$ is an ${\mathbb F}_q$ -algebra homomorphism
\begin{equation}\label{tmodule}
{\Phi} : {\mathbb F}_q[t]\rightarrow {\mathrm{Mat}}_m (K\{\tau\})
\end{equation}
such that ${\Phi}(t)$, as a polynomial in $\tau$  with coefficients in ${\mathrm{Mat}}_m(K)$  is of the following form:
\begin{equation}\label{tmod1}
{\Phi}(t)=({\theta}I_d+N)\tau^0+M_1{\tau}^1+\dots +M_r\tau^r
\end{equation}
where $I_m $ is the identity matrix and $N$ is a nilpotent matrix. In general, a \tm module over $K$ is an algebraic group $E$ defined over $K$ and isomorphic over $K$ to $\G^m_a$  together with a choice of ${\mathbb{F}}_q$-linear endomorphism $t:E\rightarrow E$ such that $d(t-\theta)^n{\mathrm{Lie}}(E)=0$ for $n$ sufficiently large. Notice that 
$d(\cdot)$ stands here for the differential of an endomorphism of algebraic groups. The choice of  an isomorphism $E\cong \G^m_a$ is equivalent to the choice of $\Phi.$
\end{definition} 
Notice that a Drinfeld module may be viewed as a one dimensional \tm module.
\begin{definition}
Let $\Phi$ be a \tm module defined over $K$ as in (\ref{tmodule})  and $L$ be an algebraic extension of $K.$ The Mordell-Weil group ${\Phi}(L)$ is the additive group of $L^m$ viewed as an $A$-module via evaluation of the matrix   ${\Phi}_{a}\in {\mathrm{Mat}}_m(K\{\tau\}),\,\, a\in A$ on $L^m,$ where $\tau$ acts on L as the  Frobenius morphism $x\rightarrow x^q.$
\end{definition}
To make notation simpler for a non-negative integer $n$ we denote $x^{q^n}:=x^{(n)}$ to be  the evaluation of the Frobenius twist ${\tau}^n$ on $x\in K.$ Of course $x^{(0)}=x$.
\begin{definition}\label{rank}
Let $\Phi$ be a \tm module. Then
\begin{itemize}
\item[(i)]
  the rank of $\Phi$ is  defined as the rank of the period lattice of $\Phi$ as a $d{\Phi}(A)$-module (cf. \cite[Section t-modules]{bp20}). 
 \item[(ii)] the degree $\deg_{\tau}\Phi$ of a \tm module $\Phi$ is defined as $\deg_{\tau}\Phi_t.$ 
 \end{itemize}
 \end{definition}
 \begin{remark}
      Notice that for Drinfeld modules $\mathrm{rk}\phi=\deg_{\tau}\phi$ but for a general \tm module the rank of $\Phi$  is not always equal to $\deg_{\tau}\Phi$  (cf. \cite[Example 5.9.9]{g96}).
  \end{remark}

\begin{definition} 
	Let $\Phi$ and $\Psi$ be  two \tm modules of dimension $d$ and $e$, respectively. A morphism $f:\Psi\lra \Phi$ of \tm modules over $K$ is a matrix $f\in\mathrm{Mat}_{d\times e}(K\{\tau\})$
	 such that
	$$
	f\Psi(t) = \Phi(t)f.
	$$
\end{definition}
\begin{remark}\label{sub}
Notice that the category of \tm modules is an additive subcategory of the abelian category of ${\mathbb F}_q[t]-$modules. This 
subcategory is not full (cf. \cite[Example 10.2]{kk04}). Therefore the $\Hom$-set in the category of \tm modules will be denoted 
as ${\Hom}_{\tau}$ i.e. $\Hom(\Phi,\Psi):={\Hom}_{\tau}(\Phi, \Psi).$ 
\end{remark}
By a zero \tm module we mean the \tm module given by the map ${\mathbb F}_q[t]\rightarrow 0.$
Let $\mathrm{Ext}^1_{\tau}(\Phi, \Psi)$ be the Baer group of extensions of \tm modules i.e. the group of exact sequences
\begin{equation}\label{seq}
0\rightarrow \Psi\rightarrow \Gamma\rightarrow \Phi\rightarrow 0
\end{equation}
with the usual addition of extensions (cf. \cite{Mac}).

An extension of a $\mathbf t$-module $\Phi:\F_q[t]\lra {\mathrm{Mat}}_d(K\{\tau\})$ by $\Psi:\F_q[t]\lra {\mathrm{Mat}}_e(K\{\tau\})$ can be determined by a biderivation i.e. $\F_q-$linear map $\delta:\F_q[t]\lra {\mathrm{Mat}}_{e\times d}(K\{\tau\})$ such that
\begin{equation}\label{delta}	
	\delta(ab)=\Psi(a)\delta(b)+\delta(a)\Phi{(b)}\quad \textnormal{for all}\quad a,b\in\F_q[t].
\end{equation}
The $\F_q-$vector space of all biderivations will be denoted by $\Der(\Phi, \Psi)$ (cf. \cite{pr}). 
 The map $\delta\mapsto \delta(t)$ induces the $\F_q-$linear isomorphism of the vector spaces $\Der(\Phi, \Psi)$ and $ {\mathrm{Mat}}_{e\times d}(K\{\tau\})$.
  Let $\delta^{(-)}: {\mathrm{Mat}}_{e\times d}(K\{\tau\})\lra \Der(\Phi, \Psi)$ be an $\F_q-$linear map defined by the following formula:
	$$\delta^{(U)}(a)=U\Phi_a - \Psi_aU\quad \textnormal{for all}\quad a\in \F_q[t]\quad\textnormal{and}\quad U\in {\mathrm{Mat}}_{e\times d}(K\{\tau\}).$$
	The image of the map  $\delta^{(-)}$ is denoted by $\Derin(\Phi, \Psi)$, and is  called the space of inner biderivations. 
We have the following $\F_q[t]-$module isomorphism (cf.  \cite[Lemma 2.1]{pr}):
	\begin{align}\label{iso_ext}
		\Ext^1_{\tau}(\Phi,\Psi)\cong\mathrm{coker}\delta^{(-)}=\Der(\Phi, \Psi)/\Derin(\Phi, \Psi).
	\end{align}

In \cite{kk04} the second and third author proved the following theorem:

\begin{thm}\cite[Theorem 10.2]{kk04}\label{thm:long_sequence}
Let 
$$\delta: \quad 0 \lra F\uplra{i} X\uplra{\pi} E\lra 0$$
 be a short exact sequence of \tm modules given by the biderivation $\delta$ and let $G$ be a \tm module. 
\begin{itemize}
	\item[$(i)$] There is an exact sequence of $\F_q[t]-$modules:
	\begin{align*}
		0\lra \Hom_{\tau}(G,F)\uplra{i\circ-}  \Hom_{\tau}(G,X)\uplra{\pi\circ-}  \Hom_{\tau}(G,E)\lra  \\
		\uplra{\delta\circ-} \Ext^1_{\tau}(G,F)\uplra{-i\circ-}  \Ext^1_{\tau}(G,X)\uplra{-\pi\circ-}  \Ext^1_{\tau}(G,E)\rightarrow 0.
	\end{align*}
	\item[$(ii)$]  There is an exact sequence of $\F_q[t]-$modules:
	\begin{align*}
		0\lra \Hom_{\tau}(E,G)\uplra{-\circ \pi}  \Hom_{\tau}(X,G)\uplra{-\circ i}  \Hom_{\tau}(F,G)\uplra{-\circ\delta}  \\
		\lra \Ext^1_{\tau}(E,G)\uplra {-\circ (-\pi)} \Ext^1_{\tau}(X,G)\uplra{-\circ (-i)}  \Ext^1_{\tau}(F,G)\rightarrow 0. 
	\end{align*}
\end{itemize} 
\end{thm}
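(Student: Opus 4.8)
\medskip

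\noindent\textbf{Proof strategy.} The plan is to realise each of the two sequences as the long exact cohomology sequence of a short exact sequence of two-term complexes, the complex in question being the one that computes $\Hom_{\tau}$ and $\Ext^1_{\tau}$ simultaneously through the presentation \eqref{iso_ext}. For a pair of \tm modules $\Phi,\Psi$ of dimensions $d$ and $e$, let
$$C^{\bullet}(\Phi,\Psi)\ :\qquad {\mathrm{Mat}}_{e\times d}(K\{\tau\})\ \uplra{\delta^{(-)}}\ \Der(\Phi,\Psi)$$
be the cochain complex concentrated in degrees $0$ and $1$. Since $\delta^{(U)}=0$ precisely when $U\Phi_t=\Psi_t U$, i.e. precisely when $U$ is a morphism $\Phi\to\Psi$, and since $\coker\delta^{(-)}=\Der(\Phi,\Psi)/\Derin(\Phi,\Psi)\cong\Ext^1_{\tau}(\Phi,\Psi)$ by \eqref{iso_ext}, one has $H^0(C^{\bullet}(\Phi,\Psi))=\Hom_{\tau}(\Phi,\Psi)$ and $H^1(C^{\bullet}(\Phi,\Psi))=\Ext^1_{\tau}(\Phi,\Psi)$. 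All maps below are $\F_q[t]$-linear for the module structures of \cite{pr}, so the whole construction stays inside the category of $\F_q[t]$-modules; and the formula $\delta^{(U)}(t)=U\Phi_t-\Psi_t U$ shows at once that $C^{\bullet}$ is functorial in both arguments, a morphism $\alpha\colon\Psi\to\Psi'$ inducing $U\mapsto\alpha U$ (and $\eta\mapsto\alpha\circ\eta$ on $\Der$) and a morphism $\beta\colon\Phi'\to\Phi$ inducing $U\mapsto U\beta$, compatibly with $\delta^{(-)}$.

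For part $(i)$ I would fix a \tm module $G$ and feed in $\delta\colon 0\to F\uplra{i}X\uplra{\pi}E\to 0$. Choosing coordinates so that $X$ is in the block normal form behind \eqref{iso_ext}, namely with underlying group $F\times E$, with $i=\left(\begin{smallmatrix}I\\0\end{smallmatrix}\right)$ and $\pi=\left(\begin{smallmatrix}0&I\end{smallmatrix}\right)$, and with $X_t=\left(\begin{smallmatrix}F_t&\delta(t)\\0&E_t\end{smallmatrix}\right)$, I apply $C^{\bullet}(G,-)$. Post-composition with $i$ and $\pi$ then induces, in each cohomological degree, the maps
$$0\lra {\mathrm{Mat}}_{(\dim F)\times(\dim G)}(K\{\tau\})\lra {\mathrm{Mat}}_{(\dim X)\times(\dim G)}(K\{\tau\})\lra {\mathrm{Mat}}_{(\dim E)\times(\dim G)}(K\{\tau\})\lra 0,$$
which form a short exact sequence because $\dim X=\dim F+\dim E$ and the two arrows are the block inclusion and projection. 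Hence $0\to C^{\bullet}(G,F)\to C^{\bullet}(G,X)\to C^{\bullet}(G,E)\to 0$ is a short exact sequence of complexes, and its long exact cohomology sequence — which, for complexes placed in degrees $0$ and $1$, has exactly six nonzero terms and ends with a surjection onto $H^1(C^{\bullet}(G,E))=\Ext^1_{\tau}(G,E)$ — is precisely the sequence of part $(i)$, the arrows $i\circ-$, $\pi\circ-$, $-i\circ-$, $-\pi\circ-$ being the maps induced on cohomology together with the customary snake-lemma signs.

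The only nonformal point is to identify the connecting homomorphism $\partial\colon\Hom_{\tau}(G,E)\to\Ext^1_{\tau}(G,F)$ with the Yoneda map $\delta\circ-$. For $f\in\Hom_{\tau}(G,E)$ I would lift $f$ through $\pi$ to $\widetilde f=\left(\begin{smallmatrix}0\\f\end{smallmatrix}\right)$ and compute
$$\delta^{(\widetilde f)}(t)=\widetilde f\,G_t-X_t\,\widetilde f=\left(\begin{smallmatrix}-\delta(t)f\\ fG_t-E_tf\end{smallmatrix}\right)=\left(\begin{smallmatrix}-\delta(t)f\\ 0\end{smallmatrix}\right),$$
the lower block vanishing because $f$ is a morphism $G\to E$. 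Thus $\delta^{(\widetilde f)}$ is the image in $\Der(G,X)$ of the biderivation $a\mapsto -\delta(a)f$ of $\Der(G,F)$ — the pull-back of $\delta$ along $f$ — whose class in $\Ext^1_{\tau}(G,F)$ is $[\delta]\circ f$; since this biderivation is exactly the one produced by the snake-lemma recipe, $\partial(f)=\pm\,\delta\circ f$. Part $(ii)$ I would carry out symmetrically, using the complexes $D^{\bullet}(H,G)\colon{\mathrm{Mat}}_{(\dim G)\times(\dim H)}(K\{\tau\})\uplra{\delta^{(-)}}\Der(H,G)$, whose cohomology is $\Hom_{\tau}(H,G)$ and $\Ext^1_{\tau}(H,G)$, and pre-composition with $i,\pi$, which yields the short exact sequence of complexes $0\to D^{\bullet}(E,G)\to D^{\bullet}(X,G)\to D^{\bullet}(F,G)\to 0$; lifting $\psi\in\Hom_{\tau}(F,G)$ to $\widetilde\psi=\left(\begin{smallmatrix}\psi&0\end{smallmatrix}\right)$ and finding $\delta^{(\widetilde\psi)}(t)=\left(\begin{smallmatrix}0&\psi\delta(t)\end{smallmatrix}\right)$ shows the boundary is $\pm\,\psi\circ\delta$. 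I expect the real work to sit in just two places: justifying the block normal form for $X_t$ — which, however, is already the substance of \eqref{iso_ext} — and the sign bookkeeping needed to make the boundary maps and the arrows $-i\circ-$, $-\pi\circ-$ match the labels in the statement on the nose; everything else is formal, being carried entirely by the single fact that $C^{\bullet}$ (equivalently $D^{\bullet}$) turns a short exact sequence of \tm modules into a short exact sequence of two-term complexes.
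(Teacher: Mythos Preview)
The paper does not prove this theorem; it is quoted verbatim from \cite{kk04} (as the citation in the theorem header indicates), so there is no in-paper proof to compare against. That said, your argument is correct and is the standard one: realise $\Hom_{\tau}$ and $\Ext^1_{\tau}$ as $H^0$ and $H^1$ of the two-term complex $\mathrm{Mat}(K\{\tau\})\xrightarrow{\delta^{(-)}}\Der$, check bifunctoriality, observe that a short exact sequence of \tm modules gives a short exact sequence of such complexes in each variable (because on underlying $K\{\tau\}$-matrix spaces the maps are block inclusion/projection), and read off the six-term sequence from the snake lemma. Your identification of the connecting map via the explicit lift is also right.

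One small point of friction with the paper's conventions: this paper (see the proof of Lemma~\ref{lem:algorytm_reduckji_dla_krotkiego_ciagu} and of Lemma~\ref{lem:iso_extow}) places the quotient in the upper-left block and the submodule in the lower-right, i.e.
\[
X_t=\begin{pmatrix}E_t&0\\ \delta(t)&F_t\end{pmatrix},\qquad i=\begin{pmatrix}0\\ I\end{pmatrix},\qquad \pi=\begin{pmatrix}I&0\end{pmatrix},
\]
whereas you use the transposed (upper-triangular) convention. This is harmless for correctness but does affect the signs: with the paper's convention the induced map $\Ext^1_{\tau}(G,X)\to\Ext^1_{\tau}(G,E)$ genuinely picks up the minus in the label $-\pi\circ-$ (cf.\ the explicit formula in the proof of Lemma~\ref{lem:iso_extow}), so if you want to match the labels on the nose you should redo your lift in the lower-triangular normal form rather than appealing to ``customary snake-lemma signs''.
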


When $K$ is a perfect field,  $\sigma$ will denote the inverse of $\tau$.  Then to simplify notation the value of $\sigma^n$ on $x\in K$ will be denoted as $x^{(-n)}$ for $n\in {\mathbb N}.$ 

\begin{definition}\label{twist}
Let $K$ be a perfect field. The ring $K\{\sigma\}$  of adjoint twisted polynomials over $K$ is defined by the following action of $\sigma$ (cf. \cite{g95}):
\begin{equation*}\label{ad1}
	{\sigma}x=x^{(-1)}\sigma \quad \textnormal{for}\quad x\in K.
\end{equation*}
\end{definition}
 A  \tsm module is defined similarly as a \tm module  by replacing $\tau$ with $\sigma$. Similarly one defines  a morphism of  \tsm modules.   The category of \tsm modules  with the zero  \tsm module attached is an additive, $\F_q[t]-$linear category.

 The following maps:
\begin{equation*}\label{maps1}
(-)^{\sigma}: K\{\tau\}\rightarrow K\{\sigma\};  \quad \Big(\sum_{i=0}^na_i\tau^i\Big)^{\sigma}=\sum_{i=0}^na_i^{(-i)}{\sigma}^i,
\end{equation*}
\begin{equation*}\label{maps2}
(-)^{\tau}: K\{\sigma\}\rightarrow K\{\tau\};  \quad \Big(\sum_{i=0}^nb_i\sigma^i\Big)^{\tau}=\sum_{i=0}^nb_i^{(i)}{\tau}^i.
\end{equation*}
are ${\mathbb F}_q$-linear mutual inverses.
 We associate with $\Phi : {\mathbb F}_q[t]\rightarrow {\mathrm{Mat}}_e(K\{\tau\})$ the adjoint homomorphism 
\begin{equation*}\label{ad}
\Phi^{\sigma}:{\mathbb F}_q[t]\rightarrow {\mathrm{Mat}}_e(K\{\sigma\})
\end{equation*}
such that    each matrix  $X_t$ is mapped  to $\Big[\big(X_t(\tau)\big)^{\sigma}\Big]^T.$  The inverse of $(-)^{\sigma}$ is given 
by the map that associates with  $\Gamma : {\mathbb F}_q[t]\rightarrow {\mathrm{Mat}}_e(K\{\sigma\})$ 
the following homomorphism:
\begin{equation*}\label{ad1}
\Gamma^{\tau}:{\mathbb F}_q[t]\rightarrow {\mathrm{Mat}}_e(K\{\tau\}); \quad X_t \rightarrow \Big[\big(X_t(\sigma)\big)^{\tau}\Big]^T
\end{equation*}
We have the following:
\begin{thm}\cite[Theorem 7.2]{kk04}\label{Duality}
	Assume that $K$ is a perfect $A-$field. 
Let $\Phi$ and $\Psi$ be \tm modules. Then there exists an isomorphism of ${\mathbb F}_q[t]$-modules:
\begin{equation*}\label{diso}
\Ext^1_{\tau}(\Phi,\Psi)\cong \Ext^1_{\sigma}(\Psi^{\sigma},\Phi^{\sigma})
\end{equation*} 
\end{thm}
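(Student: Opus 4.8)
The plan is to argue entirely at the level of biderivations. By \eqref{iso_ext} we have $\Ext^1_\tau(\Phi,\Psi)\cong\Der(\Phi,\Psi)/\Derin(\Phi,\Psi)$, and the same proof as in \cite[Lemma 2.1]{pr}, run inside the additive $\F_q[t]$-linear category of $\mathbf{t}^{\sigma}$-modules, gives $\Ext^1_\sigma(\Psi^\sigma,\Phi^\sigma)\cong\Der(\Psi^\sigma,\Phi^\sigma)/\Derin(\Psi^\sigma,\Phi^\sigma)$, where a biderivation here is an $\F_q$-linear map $\eta\colon\F_q[t]\to\mathrm{Mat}_{d\times e}(K\{\sigma\})$ with $\eta(ab)=\Phi^\sigma(a)\eta(b)+\eta(a)\Psi^\sigma(b)$ and the inner ones are $\eta^{(V)}(a)=V\Psi^\sigma_a-\Phi^\sigma_aV$. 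Since $\delta\mapsto\delta(t)$ identifies $\Der(\Phi,\Psi)$ with $\mathrm{Mat}_{e\times d}(K\{\tau\})$, and likewise $\eta\mapsto\eta(t)$ identifies $\Der(\Psi^\sigma,\Phi^\sigma)$ with $\mathrm{Mat}_{d\times e}(K\{\sigma\})$, it suffices to produce an $\F_q$-linear isomorphism between these two matrix spaces which carries inner biderivations to inner biderivations and is compatible with the $\F_q[t]$-actions. I would take $\Theta(U)=\big[U^{\sigma}\big]^{T}$, i.e. apply $(-)^{\sigma}$ entrywise and then transpose — exactly the recipe used to pass from $\Phi$ to $\Phi^\sigma$. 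Because $(-)^{\sigma}\colon K\{\tau\}\to K\{\sigma\}$ and $(-)^{\tau}\colon K\{\sigma\}\to K\{\tau\}$ are mutually inverse $\F_q$-linear maps and transposition is an involution, $\Theta$ is an $\F_q$-linear isomorphism, with inverse $V\mapsto\big[V^{\tau}\big]^{T}$.

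The first point to verify is that $\Theta$ reverses matrix multiplication. A short computation on monomials $a\tau^i,\ b\tau^j$ (using perfectness of $K$ to make sense of the negative Frobenius twists) shows $(fg)^{\sigma}=g^{\sigma}f^{\sigma}$ for $f,g\in K\{\tau\}$, and passing to matrices gives $\Theta(AB)=\Theta(B)\,\Theta(A)$ whenever $A,B$ are composable over $K\{\tau\}$; moreover $\Theta(\Phi_t)=\Phi^\sigma_t$ and $\Theta(\Psi_t)=\Psi^\sigma_t$ by the very definition of the adjoint homomorphisms. Hence, for $U\in\mathrm{Mat}_{e\times d}(K\{\tau\})$,
\[
\Theta\big(\delta^{(U)}(t)\big)=\Theta\big(U\Phi_t-\Psi_tU\big)=\Phi^\sigma_t\,\Theta(U)-\Theta(U)\,\Psi^\sigma_t=-\,\eta^{(\Theta(U))}(t),
\]
so $\Theta$ sends the value at $t$ of the inner biderivation $\delta^{(U)}$ to minus — hence, up to a unit, to — the value at $t$ of the inner biderivation $\eta^{(\Theta(U))}$. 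Since $\Theta$ is bijective, it therefore maps $\Derin(\Phi,\Psi)$ isomorphically onto $\Derin(\Psi^\sigma,\Phi^\sigma)$, and consequently descends to an $\F_q$-linear isomorphism $\Ext^1_\tau(\Phi,\Psi)\isoto\Ext^1_\sigma(\Psi^\sigma,\Phi^\sigma)$.

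It remains to check $\F_q[t]$-linearity. For this I would use that multiplication by $t$ on $\Ext^1_\tau(\Phi,\Psi)$ — the module structure appearing in Theorem~\ref{thm:long_sequence} — is induced by pull-back along the endomorphism $\Phi_t\colon\Phi\to\Phi$ (equivalently push-out along $\Psi_t\colon\Psi\to\Psi$; the two differ by an inner biderivation), so on $\mathrm{Mat}_{e\times d}(K\{\tau\})/\Derin(\Phi,\Psi)$ it is $[U]\mapsto[U\Phi_t]=[\Psi_tU]$. The analogous description holds on the $\sigma$-side, where $t$ acts by $[V]\mapsto[V\Psi^\sigma_t]=[\Phi^\sigma_tV]$. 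Then
\[
\Theta\big([U\Phi_t]\big)=\big[\Theta(U\Phi_t)\big]=\big[\Phi^\sigma_t\,\Theta(U)\big]=t\cdot[\Theta(U)],
\]
so $\Theta$ commutes with the action of $t$; since both modules are generated over $\F_q$ by the action of $t$, this establishes $\F_q[t]$-linearity and completes the proof.

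I expect the only genuinely delicate points to be bookkeeping ones: keeping the transpose and the negative twists consistent when deducing $\Theta(AB)=\Theta(B)\Theta(A)$ from $(fg)^{\sigma}=g^{\sigma}f^{\sigma}$, and making sure the pull-back description of scalar multiplication on $\Ext^1$ is precisely the $\F_q[t]$-module structure implicit in the six-term sequence of Theorem~\ref{thm:long_sequence} — essentially the standard fact that pull-back along an endomorphism of one argument and push-out along the corresponding endomorphism of the other argument induce the same operator. Everything else is formal.
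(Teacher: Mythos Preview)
The paper does not supply its own proof of this theorem: it is quoted verbatim as \cite[Theorem 7.2]{kk04} and used as a black box (see Remark~\ref{remk}). So there is no in-paper argument to compare against.

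Your proof is correct and is the natural one. The key identity $(fg)^{\sigma}=g^{\sigma}f^{\sigma}$ on $K\{\tau\}$ indeed holds (your monomial check is right), and it upgrades to $\Theta(AB)=\Theta(B)\Theta(A)$ on matrices exactly as you say. From this, $\Theta\big(\delta^{(U)}(t)\big)=-\eta^{(\Theta(U))}(t)$ follows, so $\Theta$ carries $\Derin(\Phi,\Psi)$ onto $\Derin(\Psi^{\sigma},\Phi^{\sigma})$ and descends to the quotients. For the $\F_q[t]$-linearity, the paper itself records (Step 4 of Section~\ref{sec 3}) that $t*\delta=\Psi_t\cdot\delta$ on the biderivation model, which modulo inner biderivations is your $[U]\mapsto[\Psi_tU]=[U\Phi_t]$; your computation $\Theta(U\Phi_t)=\Phi_t^{\sigma}\Theta(U)$ then gives compatibility with $t$, and since $\F_q[t]$ is generated as an $\F_q$-algebra by $t$ this suffices. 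The only stylistic nit is the phrase ``both modules are generated over $\F_q$ by the action of $t$'', which is not quite what you mean; better to say that $\F_q[t]$-linearity follows because $\Theta$ is $\F_q$-linear and commutes with the single generator $t$.
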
 
  Let us make the following general remark:
\begin{remark}\label{remk}
By Theorem \ref{Duality} all algorithms presented in the sequel can be easily adapted for computation
of \tsm structure on suitable $\Ext^1_{\tau}(\Phi ,\Psi )$ where $\rk\Psi>\rk\Phi.$
\end{remark}

\section{Algorithm of  \tm reduction}\label{sec 3}

 In \cite{pr} a method for determination of  \tm module structure 
on  \\ $\Ext^1(C^{\otimes n}, C^{\otimes m})$ for $n<m$ was given. The method used the description of extension groups in terms of biderivations. Then  various generalizations of this were constructed and used in \cite{kk04} for determination 
of the \tm module structure on the  space $\Ext^1_{\tau}(\Phi, \Psi)$ where $\Phi$ and $\Psi$ are  \tm modules of some specific forms satisfying the condition $\rk\Phi>\rk \Psi$.
It turns out that these computations can be put together into one algorithm which we call \tm   reduction.
We advise the reader to consult the  Example 4.1 of \cite{kk04}  which illustrates the action of the \tm reduction algorithm 
in the special case of two Drinfeld modules.

Before we describe the \tm reduction algorithm in full generality let us introduce some more notation.

Let  $\Phi$ resp. $\Psi$ be \tm modules of dimensions $d$ resp. $e$. By  $E_{i\times j}$ we denote the matrix   of type $e\times d$, where the only nonzero entry is  $1$ at the place $i\times j$. 
For a matrix $N=\big[n_{i,j}\big]\in\mathrm{Mat}_{e\times d}(\mathbb{Z}_{\geq 0})$, whose entries are non-negative integers we denote:
$$\mathrm{Mat}_{e\times d}(K\{\tau\})_{< N}=\Big\{	
\big[w_{i,j}(\tau)\big]\in \mathrm{Mat}_{e\times d}(K\{\tau\})\mid \deg_{\tau}w_{i, j}(\tau)<n_{i, j}\quad \forall i,j
\Big\}$$

The \tm reduction algorithm is performed in the following steps \medskip \\
\textbf{Step 1.} \smallskip\\
We use an isomorphism of  $\F_q[t]-$modules  
$$\Ext^1_{\tau}(\Phi, \Psi)\cong \Der(\Phi, \Psi)/\Derin(\Phi, \Psi),$$
and identify every element of  $\Ext^1_{\tau}(\Phi,\Psi)$ with the corresponding (class) of biderivation $\delta:\F_q[t]\lra \mathrm{Mat}_{e\times d}(K\{\tau\})$. Since every biderivation is uniquely determined by  $\delta_t:=\delta(t)$,
we can view every element of 
  $\Ext^1_{\tau}(\Phi,\Psi)$ as the class of the matrix $\delta_t\in\mathrm{Mat}_{e\times d}(K\{\tau\})$. \medskip \\
\textbf{Step 2.} \smallskip\\
 We find a basis $\big(U_{i\times j}\big)$ of the space $\mathrm{Mat}_{e\times d}(K)$ such that the inner biderivations 
  $\delta^{(c\tau^kU_{i\times j})}$ for $c\in K$ i $k=0,1,2,\dots$  fulfill the following conditions:
 \begin{itemize}
 	\item[A.] the inner biderivations $\delta^{(c\tau^k U_{i\times j})}$ form a basis of the  
  $\F_q-$linear space $\Derin(\Phi, \Psi)$;
 	\item[B.] the matrix $\delta^{(c\tau^kU_{i\times j})}_t$ has at the $i\times j$-entry  a polynomial whose coefficient 
	at the biggest power of  $\tau$ is equal to $c$. 
 	\item[C.] for every biderivation $\delta\in \Der(\Phi,\Psi)$ there exists a sequence of inner biderivations  $\delta^{(c_l\tau^{k_l} U_{i\times j})}$ wbere $c_l\in K$ and $k_l\in\mathbb{Z}_{\geq 0}$   for $l=1,2,\dots, r$ 
	such that the reduced biderivation:
 		$$\delta^{\mathrm{reduce}}=\delta -\sum_{l=1}^r \delta^{(c_l\tau^{k_l}\cdot U_{i\times j})}\in \mathrm{Mat}_{e\times d}(K\{\tau\})_{< N}, $$
 		where
 		$N=\Big[ 
 		\deg_{\tau}\delta^{(c\tau^0U_{i\times j})}_t\Big]_{i,j}.$
 		
 	 \item[D.]  different reduced biderivations in 
 	  $\mathrm{Mat}_{e\times d}(K\{\tau\})_{< N}$ represent different elements in $\Ext^1_{\tau}(\Phi,\Psi)$. 
 			 
 \end{itemize}
Points  A., B., C. and  D. imply that there exists an isomorphism of  $\F_q-$linear spaces:
\begin{equation}
	\label{eq:algorytm_reduckji_step2}
	\Ext^1_{\tau}(\Phi, \Psi)\cong \mathrm{Mat}_{e\times d}(K\{\tau\})_{< N}.
\end{equation}
\noindent
\textbf{Step 3.} \smallskip\\
In the space $\mathrm{Mat}_{e\times d}(K\{\tau\})_{< N}$ we choose  the  set of generators of the following form:
$$E_{i\times j}c\tau^k\quad \textnormal{for}\quad c\in K,\quad\textnormal{and} \quad k=0,1,2,\dots, n_{i\times j}-1$$
\noindent
\textbf{Step 4.} \smallskip\\
 In order to  determine the structure of a \tm module on $\Ext^1_{\tau}(\Phi,\Psi)$
 we transfer the  $\F_q[t]-$module  structure from $\Ext^1_{\tau}(\Phi,\Psi)=\Der(\Phi,\Psi)/\Derin(\Phi,\Psi)$ to the space  $\mathrm{Mat}_{e\times d}(K\{\tau\})_{< N}$ via the isomorphism \eqref{eq:algorytm_reduckji_step2}.
 To achieve this it is enough to find the value of  multiplication by 
$t$  on the generators from Step 3. 
Recall that multiplication of $\delta\in \Der(\Phi,\Psi)/\Derin(\Phi,\Psi)$ by  $t$ is given by the following formula cf. \cite{pr}: 
$t*\delta = \Psi_t\cdot \delta.$
Thus in this step we determine  the values of multiplication by  $t$ on the generators chosen in Step 3:
$$t*E_{i\times j}c\tau^k=\Psi_t\cdot E_{i\times j}c\tau^k.$$
\textbf{Step 5.} \smallskip\\
If  $t*E_{i\times j}c\tau^k\notin \mathrm{Mat}_{e\times d}(K\{\tau\})_{< N}$ then we reduce the terms of too big degrees 
by means of the inner biderivations from Step 2. We  finish the process when the reduced biderivation is in $\mathrm{Mat}_{e\times d}(K\{\tau\})_{< N}$.\medskip\\
\textbf{Step 6.} \smallskip\\
We write the reduced values of $t*E_{i\times j}c\tau^k$ in the coordinate system $E_{i\times j}\tau^k$. 
\smallskip\\
\textbf{Step 7.} \smallskip\\
Since every expression of the form
 $c^{q^m}$ can be written as the evaluation of the monomial  $\tau^m$ at $\tau=c$ the coordinate vector for every value of  $t*E_{i\times j}c\tau^k$ can be written as the vector of polynomials in  $K\{\tau\}$ evaluated in  $\tau=c$. 
\medskip\\
\textbf{Final Step} \smallskip\\    
We form a matrix  $\Pi_t$, whose columns are the  vectors of polynomials computed in  Step 7. Matrix $\Pi_t$ 
determines the \tm module structure on $\Ext^1_{\tau}(\Phi,\Psi)$. \\
\begin{remark}
Notice that the condition for the degrees of  \tm modules is not sufficient for the reduction algorithm to be always executable. 
This follows from the fact that we cannot always perform
  Step 2 i.e. we cannot  find the  basis  $(U_{i\times j})_{i,j}$ such that  the above mentioned inner biderivations fulfill conditions A., B., C. and D. in Step 2.  The reader can see this by taking
	$$\Phi_t=\left[\begin{array}{cc}
		\theta+\tau^2 & \tau\\
		\tau & \theta
	\end{array}\right]\quad \textnormal{and}\quad 
	\Psi_t=\theta +\tau.$$ 
\end{remark}

However, we have the following:

\begin{prop}\label{prop:poprawnosc_algorytmu_redukcji}
	Let $\Phi$ and $\Psi$ be \tm modules such that  $\rk\Phi>\rk\Psi$ and for which there exists a basis 
	 $\big(U_{i\times j}\big)_{i,j}$ satisfying the conditions of  Step 2 of \tm reduction algorithm. 
	 Then the \tm reduction algorithm is correct i.e. we finally obtain the matrix 
	  $\Pi_t$ of the following form: 
	 $$\Pi_t=(\theta I+N_{\Pi})\tau^0+ \sum_{i=1}^{finite}C_i\tau^i,$$
	 where  $C_i\in \mathrm{Mat}(K)$ and $N$ is a nilpotent matrix.  
\end{prop}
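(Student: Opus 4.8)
The plan is to verify that the seven-step reduction procedure, when the hypotheses guarantee that Step 2 can be carried out, terminates and produces an output matrix $\Pi_t$ of the required shape. The key observation is that the assertion decomposes into three separate claims: (a) that the reduction in Step 5 terminates, so that the multiplication-by-$t$ maps on the generators $E_{i\times j}c\tau^k$ are well defined on $\mathrm{Mat}_{e\times d}(K\{\tau\})_{<N}$; (b) that the resulting $\F_q[t]$-action genuinely agrees, under the isomorphism \eqref{eq:algorytm_reduckji_step2}, with the $\F_q[t]$-module structure on $\Ext^1_{\tau}(\Phi,\Psi)$ transferred from $\Der/\Derin$; and (c) that the matrix $\Pi_t$ extracted in Step 7 and the Final Step has the form $(\theta I+N_{\Pi})\tau^0+\sum_i C_i\tau^i$ with $N_{\Pi}$ nilpotent, i.e. that it actually defines a \tm module.

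For (a), I would argue by a descent on a degree invariant. Given a biderivation $\delta$, consider the multiset of $\tau$-degrees of the entries of $\delta_t$ that exceed the bound $N=[n_{i,j}]_{i,j}$, ordered say lexicographically from the largest excess down. Condition B of Step 2 says that for each position $(i,j)$ the inner biderivation $\delta^{(c\tau^k U_{i\times j})}$ contributes, at position $(i,j)$, a polynomial whose top coefficient is $c$ and whose degree we can prescribe by choosing $k$; the hypothesis $\rk\Phi>\rk\Psi$ is what forces $\deg_\tau \delta^{(c\tau^0 U_{i\times j})}_t$ to be strictly larger than $0$ in the relevant slot (it is governed by $\deg_\tau\Phi_t$ rather than $\deg_\tau\Psi_t$), so subtracting the right inner biderivation strictly lowers the chosen leading term without introducing anything of even higher degree elsewhere. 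Hence each reduction step strictly decreases the invariant in a well-ordered set, and the process halts at a representative in $\mathrm{Mat}_{e\times d}(K\{\tau\})_{<N}$. Condition C of Step 2 is exactly the assertion that such a sequence of reductions exists; conditions A and D then give that the class of the reduced biderivation is the unique representative in $\mathrm{Mat}_{e\times d}(K\{\tau\})_{<N}$, which is what makes \eqref{eq:algorytm_reduckji_step2} an isomorphism and makes $t*E_{i\times j}c\tau^k$ unambiguous.

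For (b), the formula $t*\delta=\Psi_t\cdot\delta$ from \cite{pr} (recalled in Step 4) describes the $\F_q[t]$-action on $\Der/\Derin$ directly; Steps 4--7 just rewrite $\Psi_t\cdot E_{i\times j}c\tau^k$, reduce modulo $\Derin$ using the basis of Step 2, and re-express the coefficients as evaluations of twisted polynomials at $\tau=c$ (Step 7 uses $c^{q^m}=\tau^m|_{\tau=c}$). So (b) is essentially a bookkeeping check that each of these rewritings preserves the class in $\Ext^1_\tau$, together with $\F_q$-linearity and the fact that the generators of Step 3 span. The only substantive point is (c): one must show that when $\Pi_t$ is assembled from the columns of Step 7, its degree-zero part is $\theta I+N_\Pi$ with $N_\Pi$ nilpotent. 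Here I would compute the action of $t$ on the lowest generators $E_{i\times j}c\tau^0$: the leading ($\tau^0$) contribution of $\Psi_t\cdot E_{i\times j}c$ is $\theta c E_{i\times j}$, and the reductions in Step 5 only add terms coming from inner biderivations, whose $\tau^0$-parts are controlled by the $\tau^0$-part $\theta I+N$ of $\Psi_t$ and by the nilpotent $N$; chasing through, one finds the $\tau^0$-block of $\Pi_t$ is conjugate/equivalent to a block-upper-triangular matrix with $\theta$ on the diagonal, hence of the form $\theta I+N_\Pi$ with $N_\Pi$ nilpotent. That $\Pi_t$ is a polynomial in $\tau$ of finite degree is immediate since all the $E_{i\times j}c\tau^k$ lie in the finite-dimensional space $\mathrm{Mat}_{e\times d}(K\{\tau\})_{<N}$.

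The main obstacle I anticipate is step (c), specifically proving nilpotency of $N_\Pi$: one has to track how the nilpotent blocks $N$ of $\Psi_t$ (and the interaction with $\deg_\tau\Phi_t$ through the reduction) propagate into the degree-zero part of $\Pi_t$ after possibly many reduction steps, and confirm that no $\theta$'s are shifted off the diagonal in a way that breaks nilpotency of the off-diagonal remainder. I would handle this by setting up a filtration on $\mathrm{Mat}_{e\times d}(K\{\tau\})_{<N}$ compatible with the $\tau$-degree stratification of Step 3 and showing the $t$-action is "triangular with eigenvalue $\theta$" with respect to it, so that $(\Pi_t - \theta I)$ restricted to degree $0$ is strictly triangular, hence nilpotent.
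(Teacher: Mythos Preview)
Your decomposition into (a), (b), (c) is sound, and you are right that (c) carries all the content; (a) is granted by Condition~C and (b) is bookkeeping. But there is a genuine gap in your treatment of (c), coming from a conflation of two different ``$\tau^0$-parts''. The degree-zero matrix of $\Pi_t$ that must equal $\theta I+N_{\Pi}$ is produced \emph{after} the Step~7 substitution $c^{q^m}\mapsto \tau^m$; it therefore records the \emph{$c$-linear} coefficient in each coordinate of the reduced $t*E_{i\times j}c\tau^k$, not the $\tau^0$-entry of the biderivation matrix. In particular it receives a contribution from \emph{every} generator $E_{i\times j}c\tau^k$, not just those with $k=0$, and the $\tau^0$-term of an inner biderivation $\delta^{(V)}$ (which is $V_0N_{\Phi}-N_{\Psi}V_0$, involving both nilpotent parts) is not the quantity you need to control. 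Your proposed ``filtration compatible with the $\tau$-degree stratification'' is aimed at the wrong grading.

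The paper closes this gap with one explicit inductive invariant. After a harmless base change making $N_{\Psi}$ lower triangular, one shows by induction on the number of reductions that the partially reduced biderivation always has the shape
\[
\Big((\theta I+N_{\Psi})E_{i\times j}\,c+[w_{a\times b,k}(c)]\Big)\tau^k+\sum_{l\neq k}[w_{a\times b,l}(c)]\,\tau^{l},
\]
where every $w_{a\times b,l}\in K\{\tau\}$ has \emph{no constant term}. The inductive step works because the scalar $c_r$ used in the next reduction is read off from one of the $w$'s, hence is itself $c$-nonlinear, so the entire inner biderivation $\delta^{(c_r\tau^{k_r}U)}$ contributes only $c$-nonlinear terms. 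Feeding this into Step~7, the $\tau^0$-column indexed by $(i,j,k)$ is exactly the coordinate vector of $(\theta I+N_{\Psi})E_{i\times j}\tau^k$; thus $N_{\Pi}$ is block-diagonal with each block the left-multiplication by $N_{\Psi}$, and nilpotency of $N_{\Pi}$ is inherited directly from $N_{\Psi}$. This ``reductions introduce no $c$-linear part'' lemma is the missing idea in your plan.
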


\begin{proof}
	Let 
	$$\Phi=(\theta I+N_{\Phi})\tau^0+ \sum_{i=1}^{n}A_i\tau^i,\quad\textnormal{and} \quad \Psi=(\theta I+N_{\Psi})\tau^0 + \sum_{i=1}^{m}B_i\tau^i,$$
	where $n=\rk\Phi>\rk\Psi=m$. 
	Without loss of generality we may assume that 
	 $N_{\Psi}$ is a lower triangular matrix.

Assume there exists a basis 
		$\big(U_{i\times j}\big)_{i,j}$ of the space $\mathrm{Mat}_{e\times d}(K\{\tau\})$ fulfilling conditions  A., B., C. and D. of Step 2. Consider the following biderivations:
\begin{align}\label{eq:mnozenie_przez_t}
	t*E_{i \times j}c\tau^k&= \Psi_t\cdot E_{i\times j}c\tau^k=\Big((\theta I+N_{\Psi})\tau^0+ \sum_{l=1}^{m}B_l\tau^l\Big)\cdot E_{i\times j}c\tau^k=\\
	&=\Big(\theta I +N_{\Psi}\Big)E_{i\times j}c\tau^k+\sum_{l=1}^m c^{q^l} B_lE_{i\times j}\tau^{k+l} \nonumber
\end{align}
Step 2. guarantees that there exists a sequence of inner biderivations:
\begin{align*}
	\delta^{(c_1\tau^{k_1}U_{i_1\times j_1})},\quad \delta^{(c_2\tau^{k_2}U_{i_2\times j_2})},\quad  \delta^{(c_s\tau^{k_s}U_{i_s\times j_s})}.
\end{align*}
such that the reduced biderivation
\begin{equation}
	\label{eq:zredukowana_biderywacja}
	t*E_{i \times j}c\tau^k-\sum\limits_{l=1}^s\delta^{(c_l\tau^{k_l}U_{i_l\times j_l})}\in\mathrm{Mat}_{e\times d}(K\{\tau\})_{<N}.
\end{equation}
Notice that we do not reduce the coefficient at $\tau^k$ since 
 $E_{i \times j}c\tau^k\in\mathrm{Mat}_{e\times d}(K\{\tau\})_{<N}.$
First we will show that that the biderivation
 \eqref{eq:zredukowana_biderywacja} has the following form:
\begin{equation}\label{eq:postac_wieloianowa_biderywacji}
	\Bigg(\Big(\theta I +N_{\Psi}\Big)E_{i\times j}c+ \Big[ w_{a\times b, k}(c)\Big]_{a,b}\Bigg)\tau^k+\sum_{ l=0,\ l\neq k}^{\textnormal{finite}} \Big[ w_{a\times b, l}(c)\Big]_{a,b}\tau^{l},
\end{equation}
where  $w_{a\times b, l}(\tau)$ are twisted polynomials in  $K\{\tau\}$ with  coefficients at ${\tau}^0$ equal to zero.
Notice that some of these polynomials might be identically equal to zero.
The proof is by a finite induction on the number of  inner biderivations used in the reduction process.
 Notice that the biderivation (\ref{eq:mnozenie_przez_t}) i.e. before the start of the reduction process is of the form (\ref{eq:zredukowana_biderywacja}). Assume that,  after reduction by means of $r-1$ inner biderivations, the biderivation  \eqref{eq:mnozenie_przez_t} has the following form:
\begin{align*}
	t*E_{i \times j}c\tau^k-\sum\limits_{l=1}^{r-1}\delta^{(c_l\tau^{k_l}U_{i_l\times j_l})}&=
		\Bigg(\Big(\theta I +N_{\Psi}\Big)E_{i\times j}c+ \Big[ v_{a\times b, k}(c)\Big]_{a,b}\Bigg)\tau^k+\\
		&+\sum_{l=0,\ l\neq k}^{\textnormal{finite}} \Big[ v_{a\times b, l}(c)\Big]_{a,b}\tau^{l},
\end{align*}
for some twisted polynomials  $ v_{a\times b,l}(\tau)$ with no constant terms. 
We will show that after next reduction by the biderivation
 $\delta^{(c_r\tau^{k_r}U_{i_r\times j_r})}$ we still obtain a biderivation of the form  \eqref{eq:postac_wieloianowa_biderywacji}.     First we find the form of this inner biderivation.

\begin{align}\label{eq:biderywacja_wewnetrzna_r}
	&\delta^{(c_r\tau^{k_r}U_{a\times b})}=c_r\tau^{k_r}U_{i_r\times j_r}\Phi - \Psi c_r\tau^{k_r}U_{i_r\times j_r}\nonumber \\
	&=c_r\Big( \nadwzorem{(\theta^{(k_r)}-\theta)U_{i_r\times j_r}^{(k_r)} + U_{i_r\times j_r}^{(k_r)}N_{\Phi}-N_{\Psi}U_{i_r\times j_r}^{(k_r)}}{:=\big[d_{a\times b}\big]_{a,b}} \Big)\tau^{k_r}+\nonumber\\
	& + \sum_{l=1}^m \Big(c_r\podwzorem{U_{i_r\times j_r}^{(k_r)}A_l^{(k_r)}}{:=\big[\widehat{a}_{a\times b,l}\big]_{a,b}}- \podwzorem{B_lc_r^{(l)}U_{i_r\times j_r}}{:=\big[\widehat{b}_{a\times b,l}\big]_{a,b}}\Big)\tau^{k_r+l}+
	\sum_{l=m+1}^n c_r\podwzorem{U_{i_r\times j_r}^{(k_r)}A_l^{(k_r)}}{:=\big[\widehat{a}_{a\times b,l}\big]_{a,b}}\tau^{k_r+l} \\
	&=c_r\big[d_{a\times b}\big]_{a,b}\tau^{k_r}+ \sum_{l=1}^m \Big(c_r\big[\widehat{a}_{a\times b,l}\big]_{a,b}-c_r^{(l)}\big[\widehat{b}_{a\times b,l}\big]_{a,b}\Big)\tau^{k_r+l} \nonumber\\
	&+ \sum_{l=m+1}^n c_r\big[\widehat{a}_{a\times b,l}\big]_{a,b}\tau^{k_r+l}\nonumber
\end{align}
where  $A_*^{(k_r)}$ means that every entry of this matrix is raised to the power $q^{k_r}$.  
Recall that according to  Step 2  the inner biderivation $\delta^{(c_r\tau^{k_r}U_{i_r\times j_r})}$  as an  $i_r\times j_r$ entry has a twisted polynomial with the coefficient at the biggest power $\tau^M$ equal to $c_r$. This biderivation is used for reduction of the term   $v_{i_r\times j_r,M}(c)$ by putting $c_r=\dfrac{v_{i_r\times j_r,M}(c)}{\widehat{a}_{i_r\times j_r}}$. 
Write the inner biderivation $\delta^{(c_r\tau^{k_r}U_{a\times b})}$in the following form:
\begin{align*}
	\delta^{(c_r\tau^{k_r}U_{a\times b})}&=
	\sum_{l=k_r}^{k_r+n}\big[u_{a\times b,l}(c_r)\big]_{a,b}\tau^{l},
\end{align*}
where $u_{a\times b,l}(\tau)$ are twisted polynomials in $K\{\tau\}$ with no constant terms.
Then 
\begin{align*}
	&t*E_{i \times j}c\tau^k-\sum\limits_{l=1}^{r}\delta^{(c_l\tau^{k_l}U_{i_l\times j_l})}=
	\Bigg(\Big(\theta I +N_{\Psi}\Big)E_{i\times j}c \\
	&+ \Big[ v_{a\times b, k}(c)-u_{a\times b,k}(c_r)\Big]_{a,b}\Bigg)\tau^k
	+\sum_{l=0,\ l\neq k}^{\textnormal{finite}} \Big[ v_{a\times b, k+l}(c)- u_{a\times b,l}(c_r)\Big]_{a,b}\tau^{l}, \\
\end{align*}
where we set $u_{a\times b,l}(\tau)=0$ for $l\notin\{k_r,k_r+1,\dots, k_r+n\}$. 

Substituting
$$w_{a\times b,l}(\tau)=v_{a\times b,l}(\tau)-u_{a\times b,l}\Bigg( \dfrac{v_{i_r\times j_r,M}(\tau)}{\widehat{a}_{i_r\times j_r}}\Bigg)$$
we see that the twisted polynomials $w_{a\times b,l}(\tau)$  have no constant terms since  $v_{a\times b, l}(\tau)$ and $u_{a\times b, l}(\tau)$  have no constant terms. This finishes the inductive step.

The form of the biderivation \eqref{eq:postac_wieloianowa_biderywacji} implies that one can perform  Step 7 and the Final Step. 
In this way we obtain a matrix
 $\Pi_t$ of the following form:
$$\Pi_t=(\theta I+N_\Pi)\tau^0+\sum_{l=1}^{\textnormal{finite}}C_l\tau^l.$$
It remains to show that
 $N_\Pi$ is a nilpotent matrix. Since the twisted polynomials $w_{a\times b,k+l}(\tau)$ in biderivations \eqref{eq:postac_wieloianowa_biderywacji}  have no constant terms then  the entries of  $N_\Pi$ for the multiplication $t*E_{i\times j}c\tau^k$ 
come from the entries of $N_{\Psi}E_{i\times j}$.  Since $N_{\Psi}$ is a lower triangular  matrix then  $N_\Pi$ 
is also  lower triangular  and therefore nilpotent. 
\end{proof}
\begin{definition}
If the \tm module structure  on $\Ext^1_{\tau}(\Phi,\Psi)$ can be obtained from the described above  \tm reduction algorithm we will say that such a  \tm module structure  comes from  \tm reduction. 
\end{definition}

\begin{remark} 
In this notation matrices $U_{i\times j}$ for some cases  considered in \cite{kk04} had the following forms:
\begin{itemize}
\item{} $U_{i\times j}=E_{i\times j}A_n^{-1}$ for ${\Ext}_{\tau}^1(\Phi, C^{\otimes e})$ where $\Phi_t=(\theta I+ N_{\Phi})\tau^0+\sum\limits_{j=1}^n A_j\tau^j,$ $n>1,$ is a \tm module of dimension $d$, such 
	that $A_n$ is an invertible matrix and    $C^{\otimes e}$ is the $e-$th tensor of the Carlitz module.
	$E_{i\times j}\in  M_{e\times d}(K)$ denotes here the  matrix that has $1$ at the place $(i,j)$ and $0$ otherwise.
\item{}$U_{i\times j}=E_{j}A_n^{-1}$	 for ${\Ext}_{\tau}^1(\Phi, \psi)$ where $\Phi_t=(\theta I+ N_{\Phi})\tau^0+\sum\limits_{j=1}^n A_j\tau^j $ is a \tm module of dimension $d$, where $A_n$ is an invertible matrix and   $\psi_t=\theta +\sum\limits_{j=1}^mb_j\tau^j$ is a Drinfeld module satisfying the condition $\rk \Phi>\rk\psi.$ 
$E_i\in M_{1\times d}(K)$ denotes here a row matrix with $1$ on the $i$-th place and $0$ otherwise.
\end{itemize}
The reader can easily find the forms of $U_{i\times j}$ for all other cases i.e. for ${\Ext}^1_{\tau}(\phi,\psi)$ where $\phi$ and $\psi$ are Drinfeld modules with $\rk\phi>\rk\psi,$\\
 $\Ext^1_{\tau}\Big(\prod_{i=1}^n\phi_i, \prod_{j=1}^m\psi_j\Big)$ where $\rk\phi_i>\rk\psi_j$ for $i=1,\dots,n,$ $j=1,\dots, m$ and $\Ext^1_{\tau}\Big(\Phi, \prod_{j=1}^m\psi_j\Big)$ where $\Phi$ is a \tm module, $\psi_j$ are Drinfeld modules and 
 $\rk\Phi>\rk\psi_j$, $j=1,\dots,m.$ 
 
\end{remark}

Notice that by \cite[ Proposition 5.1]{kk04},  \cite[ Theorem 6.1]{kk04}, \cite[ Theorem 8.3]{kk04}, \cite[Theorem 8.4]{kk04},
and \cite[Theorem 9.2]{kk04} all the above mentioned \tm module structures come from \tm reduction.

For the  \tm module structures coming from \tm reduction we have the following useful lemma:

\begin{lem}\label{lem:algorytm_reduckji_dla_krotkiego_ciagu}\label{sec3}
	Let 
	\begin{equation}\label{exact}
	 0\lra \Psi\lra \Upsilon \lra \Phi\lra 0
	 \end{equation}
	 be a short exact sequence of \tm modules.
	\begin{itemize}
		\item[$(i)$] If $\zeta$ is a \tm module such that $\Hom_{\tau}(\Psi, \zeta)=0$ and
		the spaces $\Ext^1_{\tau}(\Psi, \zeta)$ and $\Ext^1_{\tau}(\Phi, \zeta)$ have  \tm module structures coming from
		 \tm reduction then 
		 $\Ext^1_{\tau}(\Upsilon, \zeta)$ has also a \tm module structure coming from \tm reduction. 
		\item[$(i^D)$] If $\zeta$ is a \tm module such that  $\Hom_{\tau}(\zeta,\Phi)=0$ and the spaces
		 $\Ext^1_{\tau}(\zeta, \Psi)$ and $\Ext^1_{\tau}(\zeta, \Phi)$ have  \tm module structures coming from
		 \tm reduction then 
		 $\Ext^1_{\tau}(\zeta, \Upsilon)$ has also a \tm module structure coming from \tm reduction. 
	\end{itemize} 
\end{lem}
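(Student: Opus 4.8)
The plan is to first pin down the underlying $\F_q[t]$-module by means of the six-term sequence, and then to assemble an explicit \tm reduction for $(\Upsilon,\zeta)$ out of the reductions already provided for the two outer pairs. For $(i)$ I would begin by applying Theorem \ref{thm:long_sequence}$(ii)$ to \eqref{exact} with $G=\zeta$: since $\Hom_{\tau}(\Psi,\zeta)=0$, the six-term sequence collapses to a short exact sequence of $\F_q[t]$-modules
$$0\lra \Ext^1_{\tau}(\Phi,\zeta)\lra \Ext^1_{\tau}(\Upsilon,\zeta)\lra \Ext^1_{\tau}(\Psi,\zeta)\lra 0,$$
so the remaining task is to realise this sequence through the \tm reduction algorithm run on $(\Upsilon,\zeta)$. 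Let $d_{\Psi},d_{\Phi},e$ be the dimensions of $\Psi,\Phi,\zeta$. As explained around \eqref{delta}, I may choose coordinates with $\Upsilon_t=\left[\begin{smallmatrix}\Psi_t&\delta_t\\0&\Phi_t\end{smallmatrix}\right]$, where $\delta\in\Der(\Phi,\Psi)$ represents the class of \eqref{exact}, and identify $\Der(\Upsilon,\zeta)\cong\mathrm{Mat}_{e\times(d_{\Psi}+d_{\Phi})}(K\{\tau\})$, writing a biderivation's value at $t$ as a block row $[\,P\mid Q\,]$ with $P\in\mathrm{Mat}_{e\times d_{\Psi}}(K\{\tau\})$ and $Q\in\mathrm{Mat}_{e\times d_{\Phi}}(K\{\tau\})$. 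The key computation is that for $U=[\,U_1\mid U_2\,]$,
$$\delta^{(U)}_t=\big[\ U_1\Psi_t-\zeta_tU_1\ \big|\ (U_2\Phi_t-\zeta_tU_2)+U_1\delta_t\ \big],$$
so the $\Psi$-block of $\delta^{(U)}$ is the $(\Psi,\zeta)$-inner biderivation $\delta^{(U_1)}$, the $\Phi$-block is the $(\Phi,\zeta)$-inner biderivation $\delta^{(U_2)}$ plus a coupling term $U_1\delta_t$, and, crucially (this is where $\Hom_{\tau}(\Psi,\zeta)=0$ enters), $\delta^{(U)}$ has vanishing $\Psi$-block precisely when $U_1=0$.

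Next I would take, as the Step 2 basis for $(\Upsilon,\zeta)$, the ``block-diagonal'' basis of $\mathrm{Mat}_{e\times(d_{\Psi}+d_{\Phi})}(K)$ assembled from bases $(U^{\Psi}_{i\times j})$, $(U^{\Phi}_{i\times j})$ realising \tm reduction for $(\Psi,\zeta)$, $(\Phi,\zeta)$, with associated degree bounds $N_{\Psi}$, $N_{\Phi}$: namely $[\,U^{\Psi}_{i\times j}\mid 0\,]$ at the $\Psi$-block positions, $[\,0\mid U^{\Phi}_{i\times j}\,]$ at the $\Phi$-block positions, and $N=[\,N_{\Psi}\mid N_{\Phi}\,]$. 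Conditions A and B of Step 2 are then read off the block formula: $\delta^{(c\tau^k[\,0\mid U^{\Phi}_{i\times j}\,])}$ has zero $\Psi$-block and $\Phi$-block $\delta^{(c\tau^kU^{\Phi}_{i\times j})}$, so these span exactly the zero-$\Psi$-block part of $\Derin(\Upsilon,\zeta)$, which by the Hom-vanishing equals $\Derin(\Phi,\zeta)$, while $\delta^{(c\tau^k[\,U^{\Psi}_{i\times j}\mid 0\,])}$ has $\Psi$-block $\delta^{(c\tau^kU^{\Psi}_{i\times j})}$, so the two families together form a basis of $\Derin(\Upsilon,\zeta)$; B holds entrywise because it holds for the two outer pairs. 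Condition D is checked in the same spirit: if reduced biderivations $[\,P\mid Q\,]$ and $[\,P'\mid Q'\,]$ in $\mathrm{Mat}_{e\times(d_{\Psi}+d_{\Phi})}(K\{\tau\})_{<N}$ differ by some $\delta^{(U)}$, comparing $\Psi$-blocks and using D for $(\Psi,\zeta)$ forces $P=P'$ and $U_1=0$, whereupon comparing $\Phi$-blocks and using D for $(\Phi,\zeta)$ forces $Q=Q'$.

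The heart of the argument — and the step I expect to cause the real difficulty — is Condition C. Given $\eta\in\Der(\Upsilon,\zeta)$ with $\eta_t=[\,P\mid Q\,]$, the idea is a two-phase reduction. In Phase 1 one reduces $P$ into $\mathrm{Mat}_{e\times d_{\Psi}}(K\{\tau\})_{<N_{\Psi}}$ using the $\delta^{(c\tau^k[\,U^{\Psi}_{i\times j}\mid 0\,])}$; on the $\Psi$-block this is verbatim the $(\Psi,\zeta)$-reduction, hence terminates by Condition C for $(\Psi,\zeta)$, while the $\Phi$-block is corrupted by the contributions of the coupling terms $U_1\delta_t$ of the inner biderivations used. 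In Phase 2 one reduces this modified $\Phi$-block into $\mathrm{Mat}_{e\times d_{\Phi}}(K\{\tau\})_{<N_{\Phi}}$ using the $\delta^{(c\tau^k[\,0\mid U^{\Phi}_{i\times j}\,])}$, which have zero $\Psi$-block and therefore leave the already-reduced $P$ untouched; on the $\Phi$-block this is verbatim the $(\Phi,\zeta)$-reduction, hence terminates by Condition C for $(\Phi,\zeta)$. The obstacle is exactly this coupling term $U_1\delta_t$, which can be of large $\tau$-degree and spoils the $\Phi$-block: the two phases must be carried out in this order, so that the corruption of the $\Phi$-block is introduced before it is cleaned up and never reintroduced afterwards. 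Once C is established, \eqref{eq:algorytm_reduckji_step2} holds for $(\Upsilon,\zeta)$ and Steps 3--7 and the Final Step of the algorithm proceed as usual; that the output matrix $\Pi_t$ has the form required of a \tm module (with $N_{\Pi}$ nilpotent) follows as in the proof of Proposition \ref{prop:poprawnosc_algorytmu_redukcji}: the reduction coefficients $c_l$ produced along the way are twisted polynomials in the input scalar with vanishing constant term, so the coupling terms contribute to the reduced biderivation only twisted polynomials with no constant term, the reduced $t*E_{i\times j}c\tau^k$ has the shape \eqref{eq:postac_wieloianowa_biderywacji} (with $N_{\zeta}$ in the role of $N_{\Psi}$), and the $\tau^0$-part of $\Pi_t$ equals $\theta I$ plus a matrix built from $N_{\zeta}$, which one may take lower triangular.

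Finally, part $(i^D)$ I would prove by the transposed argument: $\Upsilon_t$ now multiplies $\Der(\zeta,\Upsilon)\cong\mathrm{Mat}_{(d_{\Psi}+d_{\Phi})\times e}(K\{\tau\})$ on the left, a biderivation's value at $t$ is a block column $\left[\begin{smallmatrix}P\\Q\end{smallmatrix}\right]$, the inner biderivations are $\delta^{(U)}_t=\left[\begin{smallmatrix}(U_1\zeta_t-\Psi_tU_1)-\delta_tU_2\\ U_2\zeta_t-\Phi_tU_2\end{smallmatrix}\right]$ so the coupling term $-\delta_tU_2$ now sits in the top block, one uses $\Hom_{\tau}(\zeta,\Phi)=0$ and Theorem \ref{thm:long_sequence}$(i)$ in place of the corresponding facts for $(i)$, the reduction basis is $\left[\begin{smallmatrix}U^{\Psi}_{i\times j}\\0\end{smallmatrix}\right],\left[\begin{smallmatrix}0\\U^{\Phi}_{i\times j}\end{smallmatrix}\right]$, and in Phase 1 one reduces the bottom block (the one the coupling corrupts) first. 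Alternatively, when $K$ is perfect, $(i^D)$ is immediate from $(i)$ by applying the duality of Theorem \ref{Duality} to \eqref{exact} and to $\zeta$ — which turns it into an instance of $(i)$ for $\mathbf t^{\sigma}$-modules — together with Remark \ref{remk}.
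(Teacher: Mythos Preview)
Your proposal is correct and follows essentially the same approach as the paper: apply the six-term sequence of Theorem~\ref{thm:long_sequence} to collapse to a short exact sequence of $\Ext^1$'s, write $\Upsilon_t$ in block form determined by the biderivation $\delta$, assemble the Step~2 basis for $(\Upsilon,\zeta)$ block-diagonally out of the given bases for $(\Psi,\zeta)$ and $(\Phi,\zeta)$, compute the two types of inner biderivations (pure block plus coupling term), and then invoke Proposition~\ref{prop:poprawnosc_algorytmu_redukcji}. Two minor remarks: the paper uses the lower-triangular convention $\Upsilon_t=\left[\begin{smallmatrix}\Phi_t&0\\ \delta_t&\Psi_t\end{smallmatrix}\right]$ rather than your upper-triangular one, which swaps which block carries the coupling but changes nothing of substance; and your verification of Conditions A--D (in particular the two-phase reduction for C) is considerably more detailed than the paper's, which simply asserts that the block basis ``fulfills conditions A., B., C.\ and D.\ of Step~2'' and cites Proposition~\ref{prop:poprawnosc_algorytmu_redukcji}.
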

\begin{proof}
	We will prove part $(i)$. The proof of part $(i^D)$ is analogous.
	
	Denote $\dim\Phi=d$, $\dim\Psi=e$ and $\dim\zeta=f$.
	Assume that the sequence  (\ref{exact}) is given by a biderivation $\delta$, i.e. 
	$$\Upsilon= \left[\begin{array}{c|c}
		\Phi & 0\\ \hline
		\delta& \Psi
	\end{array}\right].$$ 
	Applying the functor $\Hom_{\tau}(-,\zeta)$ to the sequence  (\ref{exact}), by Theorem \ref{thm:long_sequence}, we obtain  the following exact sequence:
	\begin{align*}
		0&\lra \Hom_{\tau}(\Phi,\zeta)\lra \Hom_{\tau}(\Upsilon,\zeta) \lra \nadwzorem{\Hom_{\tau}(\Psi,\zeta)}{=0}\lra \\&
		\lra \Ext^1_{\tau}(\Phi,\zeta)\lra \Ext^1_{\tau}(\Upsilon,\zeta) \lra \Ext^1_{\tau}(\Psi,\zeta)\lra 0.
	\end{align*}
	and therefore the following short exact sequence: 
	$$0\lra \Ext^1_{\tau}(\Phi,\zeta)\lra \Ext^1_{\tau}(\Upsilon,\zeta) \lra \Ext^1_{\tau}(\Psi,\zeta)\lra 0,$$
	where both the left an right terms are  \tm modules. 
	Every element from
	  $\Ext^1_{\tau}(\Upsilon,\zeta)\cong \Der(\Upsilon,\zeta)/\Derin(\Upsilon,\zeta)$ is given by the following biderivation: 
	$$\left[\begin{array}{c|c}
		\delta_1 &\delta_2
	\end{array}\right]\in  \mathrm{Mat}_{f \times d+e }(K\{\tau\})= \Der(\Phi, \zeta)\times \Der(\Psi, \zeta).$$
	
	Since the \tm module structures   on $\Ext^1_{\tau}(\Phi,\zeta)$ and $\Ext^1_{\tau}(\Phi,\zeta)$ come from \tm reduction there exist bases   
	$\Big(U^{\Phi,\zeta}_{i\times j}\Big)$ for $i\in\{1,\dots, f\}$ and $j\in\{1,\dots, d\}$ 
	in the space $\mathrm{Mat}_{f\times d}(K)$ and 
	$\Big(U^{\Psi,\zeta}_{i\times l}\Big)$ for $i\in\{1,\dots, f\}$ and $l\in\{1,\dots, e\}$
	in the space  $\mathrm{Mat}_{f\times e}(K)$ which satisfy the conditions in  Step 2. 
	Consider the following basis of the space $\mathrm{Mat}_{f \times d+e }(K)$
		$$\Bigg(\left[\begin{array}{c|c}
			U^{\Phi,\zeta}_{i\times j}&0
		\end{array}\right], \left[\begin{array}{c|c}
		0&U^{\Psi,\zeta}_{i\times l}
	\end{array}\right]\Bigg),$$
where $i\in\{1,\dots, f\}$,
	$j\in\{1,\dots, d\}$ and $l\in\{1,\dots, e\}.$ 
	Then an inner biderivation  from $\Derin(\Upsilon,\zeta)$ is built from the two biderivations  of the following  forms:
	\begin{itemize}
	\item[(a)]
	\begin{align*}
		\delta^{\Big(c\tau^k\left[\begin{array}{c|c}
				U^{\Phi,\zeta}_{i\times j} &0
			\end{array}\right]\Big)} &= c\tau^k\left[\begin{array}{c|c}
		U^{\Phi,\zeta}_{i\times j} &0
	\end{array}\right]\cdot \left[\begin{array}{c|c}
			\Phi & 0\\ \hline
			\delta& \Psi
		\end{array}\right] - \zeta c\tau^k\left[\begin{array}{c|c}
		U^{\Phi,\zeta}_{i\times j} &0
	\end{array}\right] \\
	&=\left[\begin{array}{c|c}
			c\tau^kU^{\Phi,\zeta}_{i\times j}\Phi-\zeta c\tau^kU^{\Phi,\zeta}_{i\times j} &0
		\end{array}\right]=\left[\begin{array}{c|c}
		 \delta^{(c\tau^kU^{\Phi,\zeta}_{i\times j})}&0
	\end{array}\right]
	\end{align*}
	where $\delta^{(c\tau^kU^{\Phi,\zeta}_{i\times j})}\in\Derin(\Phi, \zeta)$.
	\item[(b)]
	\begin{align*}
		\delta^{\Big(c\tau^k\left[\begin{array}{c|c}
				0&U^{\Psi,\zeta}_{i\times l}
			\end{array}\right] \Big)} &= \left[\begin{array}{c|c}
			0&c\tau^kU^{\Psi,\zeta}_{i\times l}
		\end{array}\right]\cdot \left[\begin{array}{c|c}
			\Phi & 0\\ \hline
			\delta& \Psi
		\end{array}\right] - \zeta \left[\begin{array}{c|c}
			0&c\tau^kU^{\Psi,\zeta}_{i\times l} 
		\end{array}\right]\\
	&=\left[\begin{array}{c|c}
			c\tau^kU^{\Psi,\zeta}_{i\times l}\cdot\delta & \delta^{(c\tau^kU^{\Psi,\zeta}_{i\times l})}
		\end{array}\right]
	\end{align*}
	where $\delta^{(c\tau^kU^{\Psi,\zeta}_{i\times l})}\in\Derin(\Phi, \zeta)$.
	\end{itemize}
	Both of these biderivations fulfill conditions  A., B., C. and  D. of Step 2. 
	Then Proposition \ref{prop:poprawnosc_algorytmu_redukcji} yields the assertion (i).	
\end{proof}

The following interesting question was asked by the referee:
When the space
$\Ext^1$ can be expressed  as a quotient of the $\Hom$-space? Such a situation will take place when the last two   $\Ext-$groups in the six-term exact sequence  described in Theorem \ref{thm:long_sequence} are isomorphic. The following lemma yields the equivalent condition for this in the language of biderivations.

\begin{lem}\label{lem:iso_extow}
	Let 
	\begin{equation*}
	 \delta:\quad 0\lra \Psi\uplra{i} \Upsilon \uplra{\pi} \Phi\lra 0
	 \end{equation*}
	 be a short exact sequence of \tm modules given by the biderivation $\delta$, where $\dim\Psi=e$ and $\dim\Phi=d$. Let $\zeta$ be a \tm module of dimension $f$. Then 
	\begin{itemize}
		\item[$(i)$] an ${\mathbb F}_q[t]$-homomorphism  $(-\pi)\circ-:\Ext^1_{\tau}(\zeta, \Upsilon)\uplra{\cong} \Ext^1_{\tau}(\zeta, \Phi)$ is an  isomorphism if and only if for arbitrary matrices
        $$\delta_2\in \mathrm{Mat}_{e\times f}\big(K\{\tau\}\big)\quad \textnormal{and}\quad u\in \mathrm{Mat}_{d\times f}\big(K\{\tau\}\big)$$  
        there exists a morphism $F\in\Hom_\tau(\zeta,\Phi)$ such that
        \begin{equation}\label{war1}
        \delta_2+\delta \big(F-u\big)\in \Derin(\zeta, \Psi).
        \end{equation}
		\item[$(i^D)$] an ${\mathbb F}_q[t]$-homomorphism  $-\circ(-i):\Ext^1_\tau(\Upsilon, \zeta)\uplra{\cong} \Ext^1_{\tau}(\Psi, \zeta)$ is an  isomorphism if and only if for arbitrary matrices
        $$\delta_1\in \mathrm{Mat}_{f\times d}\big(K\{\tau\}\big)\quad \textnormal{and}\quad  u\in \mathrm{Mat}_{f\times e}\big(K\{\tau\}\big)$$  
         there exists a morphism $F\in\Hom_\tau(\Psi, \zeta)$ such that 
        \begin{equation}\label{war2}
        \delta_1- \big(F-u\big)\delta\in \Derin(\Phi, \zeta).
        \end{equation}
	\end{itemize} 
\end{lem}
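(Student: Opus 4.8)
The plan is to read both equivalences off the six-term exact sequence of Theorem~\ref{thm:long_sequence}, once the connecting homomorphisms occurring there are made explicit at the level of biderivations.

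\emph{Part $(i)$.} Apply Theorem~\ref{thm:long_sequence}$(i)$ to the given sequence $\delta$ with $G=\zeta$; the relevant stretch is
\begin{gather*}
\Hom_{\tau}(\zeta,\Phi)\uplra{\delta\circ-}\Ext^1_{\tau}(\zeta,\Psi)\uplra{-i\circ-}\Ext^1_{\tau}(\zeta,\Upsilon)\\
\uplra{-\pi\circ-}\Ext^1_{\tau}(\zeta,\Phi)\lra 0 .
\end{gather*}
Thus $(-\pi)\circ-$ is always surjective, hence an isomorphism if and only if it is injective; by exactness this is equivalent to $(-i)\circ-$ being the zero map, and --- again by exactness at $\Ext^1_{\tau}(\zeta,\Psi)$ --- to $\delta\circ-$ being surjective. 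It remains to rewrite surjectivity of $\delta\circ-$ in terms of biderivations. Under the isomorphism $\Ext^1_{\tau}(\zeta,\Psi)\cong\Der(\zeta,\Psi)/\Derin(\zeta,\Psi)$ and the identification $\delta'\mapsto\delta'_t$, the map $\delta\circ-$ sends a morphism $F\in\Hom_{\tau}(\zeta,\Phi)\subseteq\mathrm{Mat}_{d\times f}(K\{\tau\})$ to the class of the pull-back biderivation $a\mapsto\delta_aF$, i.e. to $\delta_tF+\Derin(\zeta,\Psi)$ (the relation $F\zeta_t=\Phi_tF$ is exactly what makes $a\mapsto\delta_aF$ a biderivation in $\Der(\zeta,\Psi)$). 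Hence $\delta\circ-$ is onto precisely when for every $\delta_2\in\mathrm{Mat}_{e\times f}(K\{\tau\})$ there is $F\in\Hom_{\tau}(\zeta,\Phi)$ with $\delta_2+\delta_tF\in\Derin(\zeta,\Psi)$; here the sign of $\delta_tF$ is immaterial, since $\im(\delta\circ-)$ is an $\F_q[t]$-submodule. Finally, for a fixed $u\in\mathrm{Mat}_{d\times f}(K\{\tau\})$ one has $\delta_2+\delta(F-u)=(\delta_2-\delta_tu)+\delta_tF$, and $\delta_2-\delta_tu$ runs through all of $\mathrm{Mat}_{e\times f}(K\{\tau\})$ as $\delta_2$ does; so solvability of \eqref{war1} in $F$ for every pair $(\delta_2,u)$ is equivalent to the preceding condition, which proves $(i)$.

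\emph{Part $(i^D)$.} Argue dually, applying Theorem~\ref{thm:long_sequence}$(ii)$ with $G=\zeta$:
\begin{gather*}
\Hom_{\tau}(\Psi,\zeta)\uplra{-\circ\delta}\Ext^1_{\tau}(\Phi,\zeta)\uplra{-\circ(-\pi)}\Ext^1_{\tau}(\Upsilon,\zeta)\\
\uplra{-\circ(-i)}\Ext^1_{\tau}(\Psi,\zeta)\lra 0 .
\end{gather*}
Now $-\circ(-i)$ is always surjective, hence an isomorphism iff injective iff $-\circ(-\pi)=0$ iff $-\circ\delta$ is surjective; and $-\circ\delta$ sends $F\in\Hom_{\tau}(\Psi,\zeta)\subseteq\mathrm{Mat}_{f\times e}(K\{\tau\})$ to the class of the push-out biderivation $a\mapsto F\delta_a$, i.e. to $F\delta_t+\Derin(\Phi,\zeta)$ (this time $F\Psi_t=\zeta_tF$ puts $a\mapsto F\delta_a$ into $\Der(\Phi,\zeta)$). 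Thus $-\circ\delta$ is onto precisely when for every $\delta_1\in\mathrm{Mat}_{f\times d}(K\{\tau\})$ there is $F\in\Hom_{\tau}(\Psi,\zeta)$ with $\delta_1-F\delta_t\in\Derin(\Phi,\zeta)$, and absorbing an arbitrary $u\in\mathrm{Mat}_{f\times e}(K\{\tau\})$ exactly as before --- via $\delta_1-(F-u)\delta_t=(\delta_1+u\delta_t)-F\delta_t$ --- turns this into solvability of \eqref{war2} for every pair $(\delta_1,u)$, proving $(i^D)$.

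The only step that is not a pure diagram chase is the explicit description of the connecting maps $\delta\circ-$ and $-\circ\delta$ on biderivations: that precomposing (pulling back) the extension $\delta$ along a morphism amounts to right matrix multiplication of $\delta_t$, while postcomposing (pushing out) amounts to left multiplication --- together with the bookkeeping of signs and of the matrix formats $d$, $e$, $f$. I expect this identification, rather than the manipulation of the six-term sequence, to be where some care is needed.
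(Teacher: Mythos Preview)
Your argument is correct, and it arrives at the same equivalence as the paper, but the route is genuinely different. The paper works directly with the last map: it writes a class in $\Ext^1_\tau(\zeta,\Upsilon)$ as a block biderivation $[\delta_1,\delta_2]^T$, notes that $(-\pi)\circ-$ sends it to $-\delta_1+\Derin(\zeta,\Phi)$, and then unwinds ``kernel trivial'' into the existence of a block $[w_1,w_2]^T$ with $[\delta_1,\delta_2]^T=\delta^{([w_1,w_2]^T)}$; the morphism $F$ appears as $u+w_1$. You instead walk back along the six-term sequence: surjectivity of $(-\pi)\circ-$ is free, injectivity is equivalent to vanishing of $(-i)\circ-$, which by exactness is surjectivity of the connecting map $\delta\circ-$; you then identify that connecting map as $F\mapsto\delta_tF+\Derin(\zeta,\Psi)$ and absorb the auxiliary $u$ at the end.

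Your approach is a little more conceptual and makes the role of the six-term sequence explicit, at the cost of needing to justify the explicit form of the connecting homomorphism (your closing paragraph acknowledges this). The paper's approach avoids naming the connecting map at all but performs essentially the same bookkeeping inside one matrix identity. Either way the substantive content is the translation between ``$F$ is a morphism'' and ``$u+w_1$ commutes with the $t$-action'', and both proofs handle that the same way.
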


\begin{proof}
    We will prove the lemma for the case $(i)$. The proof for the second case is dual. Recall that the map  $i$ in the sequence $\delta$ is an inclusion on the second coordinate and the map  $\pi$ is a projection on the first coordinate.
    From \eqref{iso_ext} we have 
    $\Ext^1_{\tau}(\zeta, \Phi)\cong \Der(\zeta, \Phi)/\Derin(\zeta, \Phi)$ and an isomorphism $\Der(\zeta, \Phi)\cong \mathrm{Mat}_{d\times f}\big(K\{\tau\}\big)$. 
    Similarly, $\Ext^1_{\tau}(\zeta, \Upsilon)\cong \Der(\zeta, \Upsilon)/\Derin(\zeta, \Upsilon),$ where $\Der(\zeta, \Upsilon)\cong \mathrm{Mat}_{e+d\times f}\big(K\{\tau\}\big)=\Big[ \mathrm{Mat}_{e\times f}\big(K\{\tau\}\big),\mathrm{Mat}_{d\times f}\big(K\{\tau\}\big)\Big]^T$. 
    Then the map $(-\pi)\circ-:\Der(\zeta, \Upsilon)/\Derin(\zeta, \Upsilon)\lra \Der(\zeta, \Phi)/\Derin(\zeta, \Phi)$ on the biderivation level is given by the following formula:
    $$\left[\begin{array}{c}
        \delta_1 \\
         \delta_2
    \end{array}\right]+\Derin(\zeta, \Upsilon)\longmapsto -\delta_1+\Derin(\zeta, \Phi).$$
    This map will be an isomorphism if and only if its kernel is trivial i.e. the condition $-\delta_1\in \Derin(\zeta, \Phi)$  implies that $\left[\begin{array}{c}
        \delta_1 \\
         \delta_2
    \end{array}\right]\in \Derin(\zeta, \Upsilon)$. 
   This is equivalent to the statement that for every matrix
     $u\in\mathrm{Mat}_{d\times f}\big(K\{\tau\}\big)$, where $-\delta_1=u\zeta-\Phi u$ there exists a  matrix $\left[\begin{array}{c}
        w_1 \\
         w_2
    \end{array}\right]\in \mathrm{Mat}_{e+d\times f}\big(K\{\tau\}\big)$ such that 
    \begin{align*}
        \left[\begin{array}{c}
        \delta_1 \\
         \delta_2
    \end{array}\right]&=
    \left[\begin{array}{c}
        w_1 \\
         w_2
    \end{array}\right]\zeta - \Upsilon  \left[\begin{array}{c}
        w_1 \\
         w_2
    \end{array}\right]=
     \left[\begin{array}{c}
        w_1\zeta-\Phi w_1\\
         w_2\zeta-\Psi w_2-\delta w_1
    \end{array}\right].
    \end{align*}
    The equality $\delta_1=w_1\zeta-\Phi w_1$ is equivalent to the fact that the matrix  $F=u+w_1$ is a morphism of  \tm modules  $\zeta \rightarrow \Phi$. Moreover, the equality  $\delta_2 =w_2\zeta-\Psi w_2-\delta w_1$ is equivalent to 
    \begin{align*}
      \delta_2+ \delta \big(F-u\big)=  \delta_2+ \delta w_1\in \Derin(\zeta, \Psi).
    \end{align*}
     
\end{proof}
\begin{remark}
 The conditions (\ref{war1}) and (\ref{war2}) are difficult to handle because they require understanding of the morphism space between two \tm modules. In general this is a very difficult task. For Drinfeld modules nice, but by no means easy, algorithm of calculating the space of morphisms was developed in  \cite{kp22}.  
\end{remark}
It would be nice to have an example for which one of the conditions \eqref{war1} or  \eqref{war2} is fulfilled.

In the following propositions the answers are negative. 
\begin{prop}
Let $\Phi$ and $\Psi$ be \tm modules with the zero nilpotent matrices $N_{\Phi}$ and $N_{\Psi}$.  Let  
 \begin{equation*}
	 \delta:\quad 0\lra \Psi\uplra{i} \Upsilon \uplra{\pi} \Phi\lra 0
	 \end{equation*}
     be a short exact sequence of \tm modules given by the biderivation $\delta.$ If $\delta$ has no constant term then both conditions \eqref{war1} and \eqref{war2} are not fulfilled.
\end{prop}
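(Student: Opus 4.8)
The plan is to use the $\tau^0$-coefficient (the ``constant term'') of a matrix of twisted polynomials as an obstruction to the two criteria of Lemma \ref{lem:iso_extow}. The criteria \eqref{war1} and \eqref{war2} are formulated relative to an auxiliary \tm module $\zeta$; I will show that both fail for every nonzero such $\zeta$, so throughout $d:=\dim\Phi$, $e:=\dim\Psi$ and $f:=\dim\zeta$ are $\ge1$. Using $N_\Phi=N_\Psi=0$ write $\Phi_t=\theta I_d\tau^0+\sum_{i\ge1}A_i\tau^i$ and $\Psi_t=\theta I_e\tau^0+\sum_{i\ge1}B_i\tau^i$, let $\zeta_t=(\theta I_f+N_\zeta)\tau^0+\cdots$ with $N_\zeta$ nilpotent, and note that by hypothesis $\delta_t=\sum_{i\ge1}D_i\tau^i$ carries no $\tau^0$-term.

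First I would record that $\delta$ cannot produce a constant term: for matrices $F,u$ of matching sizes, $\delta_t(F-u)$ lies in $\bigl(\bigoplus_{i\ge1}\mathrm{Mat}_{e\times d}(K)\tau^i\bigr)\cdot\mathrm{Mat}_{d\times f}(K\{\tau\})$ and so has vanishing $\tau^0$-coefficient, and dually $(F-u)\delta_t$ has vanishing $\tau^0$-coefficient. Second I would pin down the constant terms available in the two relevant spaces of inner biderivations. For $w=w_0\tau^0+\cdots$ the $\tau^0$-coefficient of $\delta^{(w)}_t=w\zeta_t-\Psi_t w$ equals $w_0(\theta I_f+N_\zeta)-\theta w_0=w_0N_\zeta$ (since $N_\Psi=0$), so the set of $\tau^0$-coefficients of elements of $\Derin(\zeta,\Psi)$ is the subspace $V:=\{w_0N_\zeta:w_0\in\mathrm{Mat}_{e\times f}(K)\}$; similarly the $\tau^0$-coefficient of $\delta^{(v)}_t=v\Phi_t-\zeta_t v$ equals $-N_\zeta v_0$ (since $N_\Phi=0$), so the $\tau^0$-coefficients of elements of $\Derin(\Phi,\zeta)$ form $V':=\{N_\zeta v_0:v_0\in\mathrm{Mat}_{f\times d}(K)\}$.

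Third, $V$ and $V'$ are proper subspaces: $w_0\mapsto w_0N_\zeta$ is a nilpotent endomorphism of the nonzero space $\mathrm{Mat}_{e\times f}(K)$ — its $k$-th iterate sends $w_0$ to $w_0N_\zeta^k$, which is $0$ for $k$ large — hence not surjective, and likewise $v_0\mapsto N_\zeta v_0$ on $\mathrm{Mat}_{f\times d}(K)$. So I fix constant matrices $\delta_2\in\mathrm{Mat}_{e\times f}(K)\setminus V$ and $\delta_1\in\mathrm{Mat}_{f\times d}(K)\setminus V'$ and put $u=0$. To refute \eqref{war1}: for every $F\in\Hom_\tau(\zeta,\Phi)$, the first step gives that $\delta_2+\delta(F-u)$ has $\tau^0$-coefficient $\delta_2\notin V$, so $\delta_2+\delta(F-u)\notin\Derin(\zeta,\Psi)$ for every $F$; hence \eqref{war1} is not fulfilled. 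To refute \eqref{war2}: for every $F\in\Hom_\tau(\Psi,\zeta)$, the matrix $\delta_1-(F-u)\delta$ has $\tau^0$-coefficient $\delta_1\notin V'$, so it is not in $\Derin(\Phi,\zeta)$; hence \eqref{war2} fails too.

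The computations are elementary; the step that I expect to invite scrutiny is the second one, the exact determination of $V$ and $V'$. The hypotheses $N_\Phi=N_\Psi=0$ make it transparent but are not essential — in general $w_0\mapsto w_0N_\zeta-N_\Psi w_0$ and $v_0\mapsto v_0N_\Phi-N_\zeta v_0$ are differences of commuting nilpotent maps, hence still nilpotent, so $V$ and $V'$ remain proper and the argument goes through verbatim. Finally it is worth emphasizing why passing to a cunning morphism $F$ cannot help: by the first step $\delta_t(F-u)$ (resp.\ $(F-u)\delta_t$) never acquires a $\tau^0$-term, no matter which $F$ is chosen — this is precisely where the hypothesis ``$\delta$ has no constant term'' is used.
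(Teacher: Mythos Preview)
Your proof is correct and rests on the same idea as the paper's: the $\tau^0$-coefficient is an obstruction, since $\delta(F-u)$ (resp.\ $(F-u)\delta$) has vanishing constant term while the constant terms of inner biderivations cannot be arbitrary. The paper carries this out a bit more bluntly: it simply asserts that under $N_\Phi=N_\Psi=0$ every inner biderivation has vanishing constant term, then differentiates $\delta_2+\delta(F-u)=\delta^{(V)}$ to force $\partial\delta_2=0$, contradicting arbitrariness of $\delta_2$.

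Your version is sharper in one respect. The paper's computation $\partial\delta^{(V)}=\partial V\cdot\theta-\theta\cdot\partial V=0$ tacitly uses that \emph{both} nilpotent matrices vanish, but for \eqref{war1} the relevant space is $\Derin(\zeta,\Psi)$, and $N_\zeta$ is not assumed to be zero. You handle this correctly by showing only that the image $V=\{w_0N_\zeta\}$ (resp.\ $V'=\{N_\zeta v_0\}$) is a \emph{proper} subspace via nilpotence, rather than zero. Your closing remark, that without the hypotheses $N_\Phi=N_\Psi=0$ the maps $w_0\mapsto w_0N_\zeta-N_\Psi w_0$ and $v_0\mapsto v_0N_\Phi-N_\zeta v_0$ are still nilpotent (as differences of commuting nilpotents), is a genuine strengthening: it shows the proposition holds with no hypothesis on $N_\Phi,N_\Psi$ at all, which the paper's argument does not give.
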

\begin{proof}
Notice that if $N_{\Phi}=N_{\Psi}=0 $ then every inner biderivation from $\Der_{in}(\Phi,\Psi)$ has no constant term. Indeed, if $\dim\Phi=d$ and 
$\dim\Psi=e$ then for $V\in {\mathrm{Mat}}_{e\times d}(K\{\tau\})$ we have: 
\begin{align*}
{\delta^{(V)}}=&V\Phi-\Psi V=V\Big(\theta I+\sum A_i\tau^i\Big)-\Big(\theta I+\sum B_i\tau^i\Big)\\
=&\big(V\theta-\theta V\big)+V\sum A_i\tau^i-\sum B_i\tau^iV
\end{align*}
and  ${\partial}{\delta}^{(V)}={\partial}(V\theta-\theta V)=\partial V\theta -\theta \partial V=0$, where $\partial$ is a differentiation given by the following formula:
$\partial \Big(V_0+\sum V_i\tau^i\Big)=V_0.$
We will show the assertion of the proposition 
for the condition \eqref{war1}. 
Consider the equality
${\delta}_2+{\delta}(F-u)={\delta}^{(V)}$
for some $V\in {\mathrm{Mat}}_{e\times d}(K\{\tau\}).$  Differentiating both sides we see that $\partial{\delta}_2=0$ which contradicts the fact that ${\delta}_2$ can be arbitrary.
\end{proof}
\begin{prop}
Let $\phi$, $\psi$ and $\zeta$ be Drinfeld modules defined over an algebraically closed field $K$ and 
\begin{equation*}
	 \delta:\quad 0\lra \psi\uplra{i} \Upsilon \uplra{\pi} \phi\lra 0
	 \end{equation*}
     be a short exact sequence 
 given by a biderivation $\delta$ such that $\partial\delta\neq 0$. Then both conditions \eqref{war1} and \eqref{war2} are not fulfilled. 
\end{prop}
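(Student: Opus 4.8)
The plan is to reduce to the previous proposition. Since $\phi$, $\psi$, $\zeta$ are Drinfeld modules, their nilpotent matrices are trivially zero (they are $1$-dimensional, so $N_\phi=N_\psi=N_\zeta=0$). Hence the previous proposition does \emph{not} quite apply verbatim only because there $\Phi$ and $\Psi$ play the role of the outer terms of the sequence, whereas now I want to test conditions \eqref{war1} and \eqref{war2} with the third module $\zeta$ inserted. But the computation $\partial\delta^{(V)}=0$ carried out there did not use anything about the sequence; it only used that the two modules involved in the inner biderivation have zero nilpotent matrix. So the same differentiation trick works here: for any $V\in\mathrm{Mat}_{e\times f}(K\{\tau\})$ (resp.\ $\mathrm{Mat}_{f\times d}(K\{\tau\})$), the inner biderivation $\delta^{(V)}\in\Derin(\zeta,\psi)$ (resp.\ $\Derin(\phi,\zeta)$) has no constant term.

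First I would write down condition \eqref{war1}: it asks that for arbitrary $\delta_2$ and $u$ there is $F\in\Hom_\tau(\zeta,\phi)$ with $\delta_2+\delta(F-u)\in\Derin(\zeta,\psi)$. Differentiating this relation, and using $\partial(\delta^{(V)})=0$ together with the fact that $\delta(F-u)$ has constant term $\partial\delta\cdot\partial(F-u)$ (since $\delta$ is a biderivation of Drinfeld modules, $\delta(t)$ is just a skew polynomial and $\delta$ is determined by it, while $F$, $u$ are skew polynomials too), I get the necessary identity
$$
\partial\delta_2+\partial\delta\cdot\partial(F-u)=0.
$$
Now here is where $\partial\delta\neq 0$ and $K$ algebraically closed enter: $\partial\delta\in K^\times$ would force $\partial F=\partial u-(\partial\delta)^{-1}\partial\delta_2$, so $F$ would have a prescribed constant term; but since $\delta_2$ and $u$ are arbitrary this prescribes $\partial F$ to be essentially arbitrary, and $F$ must still be a genuine morphism $\zeta\to\phi$. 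The key point is that a nonzero morphism of Drinfeld modules $\zeta\to\phi$ of \emph{different} ranks cannot exist unless... — actually the cleanest route is: choose $\delta_2,u$ so that the required value of $\partial F$ is, say, $0$ but $F\neq 0$ is forced elsewhere, or conversely choose them so that $\partial F$ is forced to be nonzero while $\Hom_\tau(\zeta,\phi)=0$; then no such $F$ exists and \eqref{war1} fails.

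The main obstacle will be pinning down exactly which Drinfeld-module fact makes the set of achievable constant terms $\{\partial F : F\in\Hom_\tau(\zeta,\phi)\}$ a \emph{proper} subset of $K$ — equivalently, showing one cannot realize every constant term by a morphism. If $\zeta$ and $\phi$ have different ranks, then $\Hom_\tau(\zeta,\phi)=0$ (a morphism of Drinfeld modules preserves rank, or more precisely a nonzero morphism forces an isogeny hence equal ranks), so $\partial F=0$ is the only option, and choosing $\delta_2$ with $\partial\delta_2\neq 0$, $u$ with $\partial u=0$ already breaks \eqref{war1}. If $\zeta$ and $\phi$ happen to have the same rank, one still has $\Hom_\tau(\zeta,\phi)$ finite-dimensional over $\mathbb{F}_q$ while $K$ is infinite, so $\{\partial F\}$ is a finite set and again cannot cover the value $\partial u-(\partial\delta)^{-1}\partial\delta_2$ for all choices. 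Either way \eqref{war1} fails. The argument for \eqref{war2} is dual: differentiate $\delta_1-(F-u)\delta\in\Derin(\phi,\zeta)$ to get $\partial\delta_1-\partial(F-u)\partial\delta=0$, with $F\in\Hom_\tau(\psi,\zeta)$, and conclude identically. I expect the referee-style subtlety to be making the "same rank" sub-case airtight, so I would state it via the finiteness of $\Hom_\tau$ between Drinfeld modules together with infiniteness of $K$, both of which are standard.
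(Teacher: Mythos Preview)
Your differentiation approach and the different-rank sub-case match the paper's argument exactly. The gap is in your same-rank sub-case: the claim that $\Hom_\tau(\zeta,\phi)$ is finite-dimensional over $\mathbb{F}_q$ is \emph{false}. For instance, $\End_\tau(C)$ for the Carlitz module $C$ contains $\{\phi_a : a\in A\}\cong A=\mathbb{F}_q[t]$, which is infinite-dimensional over $\mathbb{F}_q$. So you cannot conclude that $\{\partial F : F\in\Hom_\tau(\zeta,\phi)\}$ is a finite set.

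The paper treats the same-rank case by an explicit calculation: writing $F=\sum_{i=0}^m f_i\tau^i$ and comparing coefficients in $F\zeta_t=\phi_t F$ yields recursive relations for the $f_k$ in terms of $f_0$; depending on whether $\theta^{(s)}=\theta$ for some $s\leq m$ or not, one derives a nontrivial polynomial equation that $f_0$ (or $f_m$, hence $f_0$) must satisfy, so $\partial F=f_0$ cannot range over all of $K$.

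Your idea is salvageable along a different line: $\Hom_\tau(\zeta,\phi)$ is a finitely generated $A$-module, so its image under $\partial$ is a finitely generated $\mathbb{F}_q[\theta]$-submodule of $K$; one then has to argue that such a submodule is a proper subset of the algebraically closed field $K$. That requires a separate (easy but not entirely trivial) argument, and is not the ``finite set'' claim you wrote.
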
 
\begin{proof}
We will give a proof for the condition \eqref{war1}. The proof for \eqref{war2} is dual.
    
    We will act on the sequence $\delta$ by means of the 
    functor
      $\Hom_\tau (\zeta, -)$.  
     In the condition \eqref{war1}  put  $u=0$. If the condition \eqref{war1} holds true,  then there exists a morphism
      $F\in\Hom_\tau(\zeta, \phi)$ such that 
     $$\delta_2+\delta F \in \Derin(\zeta,\psi).$$ 
     We will show that such a morphism cannot exist for arbitrary 
      $\delta_2$. Notice that every inner biderivation in $\Derin(\zeta,\psi)$ has no constant term. Therefore differentiating  the above equality we obtain
     $$\partial\delta_2+\partial F\cdot \partial \delta =0,$$
     where $\partial\delta_2,\partial F$ and $\partial \delta$ are constant terms of  $\delta_2, F$ and $\delta$ respectively. Since $\partial\delta_2$ can be an arbitrary element of the field $K$ we  see that $\partial F =-\partial\delta_2/\partial \delta$ is also arbitrary. This means that the constant term of the morphism between Drinfeld modules can be equal to an arbitrary element of $K$. We will show that this impossible if $K$ is algebraically closed.  If $\rk\zeta \neq \rk \psi$, this follows from the fact that $\Hom_\tau(\zeta,\phi)=0$. So assume that  $\rk\zeta=\rk\phi$ and 
     $$\zeta= \theta+\sum_{i=1}^na_i\tau^i,\quad 
     \phi=\theta+\sum_{i=1}^nb_i\tau^i
     \quad \textnormal{and}\quad F=\sum_{i=1}^mf_i\tau^i\in \Hom_\tau(\zeta,\phi).$$ 
     Comparing the coefficients at
      $\tau^i$ in the equality $F\zeta=\phi F$ we obtain the following equalities:
     \begin{align}\label{eq:zaleznosc_morfizm}
     f_k\Big(\theta^{(k)}-\theta \Big) =\sum_{i=1}^k \Big(b_if_{k-i}^{(i)} - f_{k-i}a_i^{(k-i)} \Big)\quad \textnormal{for}\quad k=0,1,\dots n+m, 
     \end{align}
     where we set $f_j=0$ for $j>m$ and $a_l=b_l=0$ for $l>n$.

     We consider two cases.
     If $\theta^{(s)}\neq \theta$ for every $s=1,2,\dots, m$ then from the equations  \eqref{eq:zaleznosc_morfizm} for  $k=1,2,\dots, m$ it follows that every term  $f_1$, $f_2$, \dots $f_m$ is uniquely determined by the values of some polynomial at $f_0$ . Let $f_m=w(f_0)$ for  $w(x)\in K[x]$. Consider $g(x)=w(x)-\mu$ for fixed  $\mu\in K$. Since $K$ is algebraically closed there exists $f_0\in K$ such that $g(f_0)=0$ i.e.  $f_m=w(f_0)=\mu$. This means that the coefficient at the highest exponent of $\tau$ can be arbitrary. But the equality \eqref{eq:zaleznosc_morfizm} for $k=m+n$ implies that $f_m$ must fulfill the following relation
     $$b_nf_m^{(n)}=f_ma_n^{(m)}.$$
     Thus, $f_m$ is a root of a polynomial $h(x)=b_nx^{q^n}-a_n^{q^m}x$ and therefore cannot be an arbitrary element of  $K$.

Now let $\theta^{(s)}=\theta$ for some $s\leq m$. Then simiarly as before from equalities \eqref{eq:zaleznosc_morfizm} for $k=1,2,\dots, s-1$ we obtain that $f_1,$ $f_2$, $\dots, f_{s-1}$ are uniquely determined by $f_0$ by means of some  polynomials evaluated  at $f_0.$ 
     Let $f_1=w_1(f_0)$, $\dots, f_{s-1}=w_{s-1}(f_0)$. 
     Then \eqref{eq:zaleznosc_morfizm} for $k=s$ implies the following equality:
     \begin{align*}
     0=\sum_{i=1}^s \Big(b_if_{s-i}^{(i)} - f_{s-i}a_i^{(k-i)} \Big)= 
     \sum_{i=1}^s \Big(b_iw_{s-i}(f_0)^{(i)} - w_{s-i}(f_0)a_i^{(k-i)} \Big).
     \end{align*}
     Therefore  $f_0$ is a root of the above polynomial. This contradicts the fact that  $f_0$ can be arbitrary. 
     
So, the condition \eqref{war1}  cannot be fulfilled for any 
      $\delta_2$. 
    
\end{proof}

\section{\tm  reduction algorithm in case of two \tm modules}\label{sec4}

In this section we present an algorithm for determination of a \tm module structure on the space 
$\Ext^1(\Phi,\Psi)$, where $\Phi$ and $\Psi$ are \tm modules such that $\rk\Phi>\rk\Psi$ and the matrix at  $\tau^{\rk\Phi}$ in  ${\Phi}_t$ is invertible.  As a result we obtain the following theorem which is a generalization of Theorems 8.4 and 9.2 of 
 \cite{kk04}.
\begin{thm}\label{thmm} 
	Let $\Phi=(\theta I+N_{\Phi})\tau^0+\sum\limits_{i=1}^{\rk\Phi}A_i\tau^i$ resp.  
	$\Psi=(\theta I+N_{\Psi})\tau^0+\sum\limits_{i=1}^{\rk\Psi}B_i\tau^i$ be  \tm modules of dimensions $d$ resp. $e.$ If the matrix  $A_{\rk\Phi}$ is invertible and $\rk\Phi>\rk\Psi$ then  
		\begin{itemize}
			\item[$(i)$] $\Ext^1_{\tau}(\Phi,\Psi)$ has a natural structure of a  \tm module coming from \\
			 \tm reduction,
			\item[$(ii)$] there exists a short exact sequence of  \tm modules
			$$0\lra \Ext_{0,\tau}(\Phi,\Psi)\lra \Ext^1_{\tau}(\Phi, \Psi )\lra \G_a^{s}\lra0,$$
			where $s$   
			is the number of pairs $(i,j)$ for which the matrices
		$E_{i\times j}A_n^{-1}N_{\Phi}$ and $N_{\Psi} E_{i\times j}A_n^{-1}$ are the same.
		\end{itemize}

\end{thm}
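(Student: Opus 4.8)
The plan is to establish both parts by carefully running the $\mathbf{t}$-reduction algorithm described in Section~\ref{sec 3} and verifying that Step~2 can be carried out when $A_{\rk\Phi}$ is invertible. First I would fix the candidate basis of $\mathrm{Mat}_{e\times d}(K)$ generalizing the two cases recorded in the Remark after Proposition~\ref{prop:poprawnosc_algorytmu_redukcji}, namely $U_{i\times j}=E_{i\times j}A_n^{-1}$ where $n=\rk\Phi$. The key computation is the inner biderivation
$$
\delta^{(c\tau^k U_{i\times j})}_t = c\tau^k E_{i\times j}A_n^{-1}\Phi_t - \Psi_t\, c\tau^k E_{i\times j}A_n^{-1},
$$
and the crucial point is that the top-degree term of $\delta^{(c\tau^k U_{i\times j})}_t$ occurs at $\tau^{k+n}$ with coefficient matrix $c^{q^k} E_{i\times j}A_n^{-1}\cdot A_n^{(k)}\cdot(\text{something})$; because $A_n$ is invertible and the Frobenius twist of an invertible matrix is invertible, the $i\times j$-entry of this top coefficient is a nonzero $\mathbb{F}_q$-multiple of $c^{q^k}$, which gives condition~B. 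I would then check that as $i,j$ range over all positions and $k$ over $\mathbb{Z}_{\geq 0}$, these inner biderivations are $\mathbb{F}_q$-linearly independent and span $\Derin(\Phi,\Psi)$ (condition~A); independence follows from the distinct leading terms, and spanning follows from the fact that $\delta^{(-)}$ is injective on $\mathrm{Mat}_{e\times d}(K)\tau^k$ for each $k$ together with a dimension/degree count using $\rk\Phi>\rk\Psi$. Conditions~C and~D then follow formally: the reduction strips away the top-degree terms one at a time using the $U_{i\times j}$'s whose leading exponent matches, and the resulting normal form lies in $\mathrm{Mat}_{e\times d}(K\{\tau\})_{<N}$ with $N=[\deg_\tau \delta^{(c\tau^0 U_{i\times j})}_t]_{i,j}=[n+ (\text{correction from }N_\Phi,N_\Psi)]_{i,j}$; uniqueness of the normal form (condition~D) holds because distinct elements of $\mathrm{Mat}_{e\times d}(K\{\tau\})_{<N}$ cannot differ by an inner biderivation, again by the leading-term bookkeeping. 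Once Step~2 is verified, Proposition~\ref{prop:poprawnosc_algorytmu_redukcji} immediately gives part~$(i)$.

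For part~$(ii)$ I would analyze the nilpotent part $N_\Pi$ of the resulting matrix $\Pi_t$ more closely. From the proof of Proposition~\ref{prop:poprawnosc_algorytmu_redukcji}, the $\tau^0$-coefficient of $t*E_{i\times j}c\tau^0$, after reduction, is $(\theta I + N_\Psi)E_{i\times j}c$ plus contributions from the constant terms of the reducing inner biderivations. But the inner biderivation $\delta^{(c_r\tau^0 U_{a\times b})}$ has constant-term coefficient $(\theta-\theta)U_{a\times b} + U_{a\times b}N_\Phi - N_\Psi U_{a\times b} = U_{a\times b}N_\Phi - N_\Psi U_{a\times b}$, so with $U_{a\times b}=E_{a\times b}A_n^{-1}$ this constant term is $E_{a\times b}A_n^{-1}N_\Phi - N_\Psi E_{a\times b}A_n^{-1}$. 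Hence the generator $E_{i\times j}c\tau^0$ survives into the normal form basis of $\Ext^1_\tau(\Phi,\Psi)$ precisely when this matrix vanishes, i.e. when $E_{i\times j}A_n^{-1}N_\Phi = N_\Psi E_{i\times j}A_n^{-1}$; let $s$ be the number of such pairs $(i,j)$. For those $s$ coordinates the $t$-action is multiplication by $\theta$ alone (the $\tau^0$-coefficient is exactly $\theta$, with zero nilpotent contribution and — one must also check — zero higher-$\tau$ contributions survive into these particular coordinates), so they span a sub-$\mathbf{t}$-module isomorphic to $\G_a^s$ as a quotient. Defining $\Ext_{0,\tau}(\Phi,\Psi)$ to be the complementary sub-$\mathbf{t}$-module (the span of the remaining normal-form generators, which is $t$-stable), one gets the claimed short exact sequence
$$
0\lra \Ext_{0,\tau}(\Phi,\Psi)\lra \Ext^1_\tau(\Phi,\Psi)\lra \G_a^s\lra 0.
$$

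I would organize the write-up so that the bulk of the work is the verification of Step~2's conditions A--D for the basis $U_{i\times j}=E_{i\times j}A_n^{-1}$; the rest is an application of Proposition~\ref{prop:poprawnosc_algorytmu_redukcji} and a bookkeeping argument tracking the $\tau^0$-coefficients. The main obstacle I anticipate is condition~C together with the termination/well-definedness of the reduction in the presence of the nilpotent matrices $N_\Phi, N_\Psi$: one must show that subtracting off leading terms strictly decreases a suitable complexity measure (e.g. a lexicographic order on $(i,j,\deg_\tau)$) and that the process does not get stuck because some needed leading coefficient $\widehat{a}_{i_r\times j_r}$ vanishes — this is exactly where invertibility of $A_n$ (and of $A_n^{(k)}$) is essential, and I would isolate it as a lemma stating that for each position $(i,j)$ and each exponent $\ell \geq n_{i\times j}$ there is an inner biderivation with leading term at $\tau^\ell$ supported nontrivially at $(i,j)$. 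Once that lemma is in place, conditions A, C, D are routine consequences of the leading-term structure, and part~$(ii)$ follows from the explicit constant-term formula above.
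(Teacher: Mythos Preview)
Your plan for part~$(i)$ is exactly the paper's: take $U_{i\times j}=E_{i\times j}A_n^{-1}$ and invoke Proposition~\ref{prop:poprawnosc_algorytmu_redukcji}. The paper states this in one line; your intention to verify conditions A--D explicitly is fine and does not deviate from the approach. One small slip: the leading coefficient of $\delta^{(c\tau^k U_{i\times j})}_t$ at entry $(i,j)$ is exactly~$c$, not a multiple of $c^{q^k}$, because $c\tau^k U_{i\times j}\cdot A_n\tau^n = c\,E_{i\times j}(A_n^{-1})^{(k)}A_n^{(k)}\tau^{k+n}=c\,E_{i\times j}\tau^{k+n}$. In particular $N_{i,j}=n$ uniformly, with no correction from $N_\Phi,N_\Psi$.

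Part~$(ii)$ has a genuine gap. Your sentence ``the generator $E_{i\times j}c\tau^0$ survives into the normal form basis of $\Ext^1_\tau(\Phi,\Psi)$ precisely when this matrix vanishes'' is not correct: every $E_{i\times j}c\tau^k$ with $0\le k<n$ lies in the normal form regardless of the vanishing condition. The condition $E_{i\times j}A_n^{-1}N_\Phi = N_\Psi E_{i\times j}A_n^{-1}$ is a statement about the \emph{inner biderivation} $\delta^{(c\tau^0 U_{i\times j})}$ having constant term zero (i.e.\ belonging to $\Der_0(\Phi,\Psi)$), not about which normal-form generators exist or about the $t$-action on the generator $E_{i\times j}c\tau^0$. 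You have also conflated the index of the inner biderivation with the index of the normal-form generator; in the paper's worked example the vanishing condition holds for $(i,j)=(2,2)$, yet the $\G_a$-quotient coordinate is the fourth basis element $E_{1\times 2}\tau^0$, so the association you propose fails concretely. The paper's argument instead uses the a~priori definition of $\Ext_{0,\tau}$ (classes represented by biderivations with $\partial\delta_t=0$, as in \cite{kk04}) and observes, following \cite[Theorem~9.2]{kk04}, that the codimension of $\Ext_{0,\tau}$ in $\Ext^1_\tau$ equals the number of inner biderivations $\delta^{(c\tau^0 U_{i\times j})}$ that already lie in $\Der_0$; your constant-term computation $c(U_{i\times j}N_\Phi-N_\Psi U_{i\times j})$ is the right ingredient, but the link to the exact sequence must go through the quotient $\Der/(\Der_0+\Derin)$, not through the $t$-action on individual generators.
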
	
\begin{proof} 
Part (i) follows from Proposition \ref{prop:poprawnosc_algorytmu_redukcji} by taking $U_{i\times j}=E_{i\times j}\cdot 
A_n^{-1}.$
Notice that reasoning analogous to that in 
 Theorem 9.2 of \cite{kk04}  shows that $s$  is equal to the number of inner biderivations  $\delta^{(c\tau^0U_{ i \times j})}$ that are  in $\Der_0(\Phi,\Psi)$. Recall that  $\delta^{(c\tau^0U_{ i \times j})}\in \Der_0(\Phi,\Psi)$ iff $\partial\delta^{(c\tau^0U_{ i \times j})}_t=0$. Therefore to finish the proof, it is enough to notice that the constant term of the biderivation $\delta^{(c\tau^0U_{ i \times j})}_t $ 
\begin{align*}
	c\Big( U_{ i \times j}N_{\Phi}-N_{\Psi}U_{ i \times j} \Big)&=
	c\Big(E_{i\times j}\cdot 
	A_n^{-1}N_{\Phi}-N_{\Psi}E_{i\times j}\cdot 
	A_n^{-1} \Big)= 0
\end{align*}	
iff the matrices $E_{i\times j}A_n^{-1}N_{\Phi}$ and $N_{\Psi} E_{i\times j}A_n^{-1}$ are the same.
\end{proof}	
The following example illustrates Theorem \ref{thmm}
\begin{example}
 Let $$\Phi_t=\left[\begin{array}{cc}
		\theta & {\tau}^3\\
		1+{\tau}^3 & \theta
	\end{array}\right]\quad \textnormal{and}\quad 
	\Psi_t=\left[\begin{array}{cc}\theta+\tau^2 & 0\\
		1 & \theta+\tau
	\end{array}\right].$$    
Then $N_{\Phi}=N_{\Psi}=\left[\begin{array}{cc}
		0 & 0\\
		1 & 0
	\end{array}\right],\quad \textnormal{and} \quad A_3^{-1}=\left[\begin{array}{cc}
		0 & 1\\
		1 & 0
	\end{array}\right].$ One readily verifies that
$E_{i\times j}A_3^{-1}N_{\Phi}=N_{\Psi} E_{i\times j}A_3^{-1}$ only for $(i,j)=(2,2). $ Thus $s=1.$ Accordingly, the reduction algorithm yields
$$\Ext^1_{\tau}(\Phi,\Psi)=\left[\begin{array}{cccccccccccc}
		\theta & -{\tau}^2 & 0 & 0 & 0 & 0 & 0 & 0 & 0 & 0 & 0 & 0 \\
0 &\theta & -{\tau}^2 & 0 & 0 & (\theta -{\theta}^{(1)}){\tau}^2 & 0 & 0 & 0 & 0 & 0 & 0 \\	{\tau}^2 & 0 & \theta+{\tau}^6 & 0 & {\tau}^4 & 0 & 0 & 0 & 0 & 0 &0 & 0 \\
0 & 0 & 0 & \theta & 0 & 0 & 0 & 0 & 0 & 0 &0 & 0 \\
0 & 0 & (\theta -{\theta}^{(1)}){\tau}^2 & 0 & \theta & 0 & 0 & 0 & 0 & 0 &0 & 0 \\
0 & {\tau}^4 & 0 & {\tau}^2 & 0 & \theta & 0 & 0 & 0 & 0 &0 & 0 \\
1 & 0 & {\tau}^4 & 0 & {\tau}^2 & 0 & \theta & 0 & -{\tau} & 0 & 0 &0  \\
0 & 1 & 0 & 0 & 0 & {\tau}^2 & \tau & \theta & 0 & 0 & 0 & {\tau}^2 \\
0 & 0 & 1 & 0 & 0 & 0 & 0 & \tau  & \theta & 0 & 0 & 0  \\
0 & {\tau}^2 & 0 & 1 & 0 & 0 & 0 & 0 & 0  & \theta & 0 & 0  \\
0 & 0 & {\tau}^2 & 0 & 1 & 0 & 0 & 0 & {\tau}^2 & \tau  & \theta & 0   \\
0 & 0 & 0 & 0 & 0 & 1 & 0 & 0 & 0 & 0   & \tau & \theta   \\
    \end{array}\right]$$
    The matrix for $\Ext_{0,\tau}(\Phi ,\Psi)$ comes then from the above matrix for $\Ext^1_{\tau}(\Phi,\Psi)$ by removing the fourth row and fourth column.
\end{example}

\subsection*{Notation}

In our pseudo-codes we use the following notation:

\noindent
$E_{i\times j}$ - $n \times m$ matrix with 1 in i-th row and j-th column
 and $0$ everywhere else \\
$\textsc{Deg(W)}$ - degree of  a $\tau$- polynomial $W$ \\
$\textsc{Dim(F)}$ - dimension of a t-module $F$ \\
$\textsc{Rows(M)}$ - number of rows of a matrix $M$ \\
$\textsc{Cols(M)}$ - number of columns of a matrix $M$ \\
$\textsc{Inverse(M)}$ - inverse of a matrix $M$ \\
$\textsc{Coefficient(w, n)}$ - coefficient $a_{\textproc{n}}$ of a
$K\{\tau\}$ polynomial $\textproc{w} = a_m {\tau}^m + ... + a_0$ \\
$\textproc{Substitute(expression, s, t)}$ - 
replaces symbol $\textproc{s}$ in $\textproc{expression}$
 with symbol $\textproc{t}$ (sometimes we substitute for any expression, then $\textproc{(\_)}$ stands for any expression
 cf. line 29 of Algorithm 5. )\\
 Notice that since we work with the non-commutative ring $K\{\tau\}$ certain care is needed in this substitution (cf. Step 6 and Step 7,  see also Example 4.1 of \cite{kk04}).\\
$\textproc{M}[i,j]$ - element $M_{i,j}$ of a matrix $\textproc{M}$ \\
$\begin{bmatrix}
    \boldsymbol{V_{1}} & ... & \boldsymbol{V_{k}}
    \end{bmatrix}$ - matrix with columns $\boldsymbol{V_{i}}$
\begin{algorithm}[H]
\caption{}\label{alg:helper1}
\begin{algorithmic}[1]
\Function{Pmult}{$f$, $g$}
\Comment{Multiplication of $\tau$ polynomials}

\Input
\Desc{f}{$\tau$ polynomial}
\Desc{g}{$\tau$ polynomial}
\EndInput
\Output
\Desc{$h$}{$\tau$ polynomial equal to $f \cdot g$}
\EndOutput

\State $n \gets \Call{Deg}{f}$
\State $m \gets \Call{Deg}{g}$
\State $z$ : array with $n + m + 1$ elements, initialized with $0$'s
\For {$i = 0..n$}
    \For {$j = 0..m$}
        \State $z[i+j] \gets
        \Call{Coefficient}{{f}, {i}} \cdot \Call{Coefficient}{{g}, {j}}^{(i)}$
    \EndFor
\EndFor
\State \Return $\sum_{k=0}^{m+n} z[k] \tau^k$
\EndFunction
\\
\Function{TMult}{$\Phi, \Psi$}
\Comment{Multiplication of matrices of $\tau$ polynomials}

\Input
\Desc{$\Phi$}{matrix of $\tau$ polynomials}
\Desc{$\Psi$}{matrix of $\tau$ polynomials}
\EndInput
\Output
\Desc{$\boldsymbol{X}$}{matrix of $\tau$ polynomials equal to $\Phi \cdot \Psi$}
\EndOutput

\State $n \gets \Call{Rows}{\Psi}$
\State $m \gets \Call{Cols}{\Psi}$
\State $p \gets \Call{Cols}{\Phi}$
\State $\boldsymbol{X}$ : $n \times p$ matrix
\For {$i = 1..n$}
    \For {$j = 1..p$}
        \State $sum \gets 0$
        \For {$k = 1..m$}
       \State $sum \gets sum + \Call{Pmult}{{\Psi[i,k]}, {\Phi[k,j]}}$
        \EndFor
        \State $\boldsymbol{X}[i,j] \gets sum$
    \EndFor
\EndFor
\State \Return $\boldsymbol{X}$
\EndFunction

\end{algorithmic}
\end{algorithm}

\begin{algorithm}[H]
\caption{}\label{alg:helper2}
\begin{algorithmic}[1]

\Function{Reduce1}{${\boldsymbol{V}}, {\Phi}, {\Psi}, {\boldsymbol{A_{n}^{-1}}}$}
\Comment{Reduction of t-module using \\ \qquad\qquad\qquad\qquad\quad\qquad\qquad\qquad\qquad\quad\,\, inverse matrix}

\Input
\Desc{$\boldsymbol{V}$}{module to be reduced}
\Desc{$\Phi$} { strictly pure t-module }
\Desc{$\Psi$}{ a t-module of degree less than degree $\Phi$}
\Desc{$\boldsymbol{A_{n}^{-1}}$}{inverse of the leading matrix of $\Phi$}
\EndInput

\Output
\Desc{$\boldsymbol{V}$}{reduced module}
\EndOutput

\State $n \gets \Call{Rows}{\boldsymbol{V}}$
\State $m \gets \Call{Cols}{\boldsymbol{V}}$
\State $r \gets \Call{Deg}{\Phi}$

  \Comment{we reduce degree of $\tau$ polynomial at position\\ \qquad\qquad\qquad\qquad\qquad\quad\,\, $i$, $j$ in 
        $\boldsymbol{V}$ to degree less than $r$}
\For {$i = 1..n$}
    \For {$j = 1..m$}
      
        \State $r' \gets \Call{Deg}{\boldsymbol{V}[i,j]}$

        \While {$ r' \geq r $}
            \State $a \gets \Call{Coefficient}{\boldsymbol{V}[i,j], r'} $
            \State $\boldsymbol{G} \gets 
            \Call{TMult}{{E_{i\times j} a {\tau}^{r'-r}},
                {\boldsymbol{A_{n}^{-1}}}}$ \\
            \Comment{we are setting the leading coefficient of $\boldsymbol{G}[i, j]$\\
          \qquad\qquad \qquad\qquad\qquad\quad  to be the same as leading coefficient of $\boldsymbol{V}[i, j]$}
            \State $\boldsymbol{G} \gets \Call{TMult}{{\boldsymbol{G}},
            {\Phi}} - \Call{TMult}{{\Psi}, {\boldsymbol{G}}}$
            \State $\boldsymbol{V} \gets \boldsymbol{V}-\boldsymbol{G} $
            \State $r' \gets r' - 1$
        \EndWhile
    \EndFor
\EndFor
\State \Return $\boldsymbol{V}$
\EndFunction

\end{algorithmic}
\end{algorithm}

\begin{algorithm}[H]
\caption{}\label{alg:helper3}
\begin{algorithmic}[1]

\Function{CoefficientForm}{$\boldsymbol{V}, {d}$}

\Comment{lists all coefficients up to degree defined by matrix $d$ for each\\
\qquad\qquad  $\tau$ polynomial in $\boldsymbol{V}$ in order defined by columns }

\Input
\Desc{$\boldsymbol{V}$}{matrix of $\tau$ polynomials}
\Desc{$d$}{matrix of integers with the same dimensions as $\boldsymbol{V}$}
\EndInput
\Output
\Desc{$\boldsymbol{A}$}{column matrix of coefficients}
\EndOutput

\State $n \gets \Call{Rows}{\boldsymbol{V}}$
\State $m \gets \Call{Columns}{\boldsymbol{V}}$
\State $s \gets 1$

\For {$i = 1..n$}
    \For {$j = 1..m$}
        \For {$k = 0..d[i,j]-1$}
        \algstore{myalg3}
\end{algorithmic}
\end{algorithm}

\begin{algorithm}
\begin{algorithmic}
\algrestore{myalg3}
      
            \State $\boldsymbol{A}[s, 1] \gets
            \Call{Coefficient}{{\boldsymbol{V}[i,j]}, {k}} $
            \State $s \gets s+1$
        \EndFor    
    \EndFor
\EndFor
\State \Return $\boldsymbol{A}$
\EndFunction

\end{algorithmic}
\end{algorithm}

\begin{algorithm}[H]
\caption{Computing extension of t-modules with inverse matrix}\label{alg:algorithm1}
\begin{algorithmic}[1]
\Function{Extension1}{$\Phi$, $\Psi$}

\Input
\Desc{$\Phi$, $\Psi$}{\qquad t-modules}
\EndInput
\Output
\Desc{$\Pi$}{\qquad extension of $\Phi$ by $\Psi$}
\EndOutput

\State $n \gets \Call{Dim}{\Psi}$
\State $m \gets \Call{Dim}{\Phi}$
\State $r \gets \Call{Deg}{\Phi}$
\State $d \gets [r]_{n \times m}$
\Comment{$n \times m$ matrix with all elements equal to $r.$ \\ \qquad \qquad\qquad\qquad\qquad\qquad
 We do not have to construct this matrix \\ \qquad \qquad\qquad\qquad\qquad\qquad here, but it generalizes nicely for the next \\ \qquad \qquad\qquad\qquad\qquad\qquad algorithm}
\State $\boldsymbol{A_n}$ : $m \times m$ matrix \\
 \Comment{ computing inverse matrix used  later for \\ \qquad \qquad\qquad\qquad\qquad\qquad\quad\,\,  degree reduction}
\For {$i = 1..m$}
   \For {$j = 1..m$}
        \If{$\Call{Deg}{\Phi \lbrack i, j \rbrack} = r$}
        \State $\boldsymbol{A_n}[i, j] \gets \Call{Coefficient}{{\Phi \lbrack i, j \rbrack}, {r}}$
        \Else
        \State $\boldsymbol{A_n}[i, j] \gets 0$
        \EndIf
    \EndFor
\EndFor

\State $\boldsymbol{A_{n}^{-1}} \gets \Call{Inverse}{\boldsymbol{A_n}}$
\State $s \gets 1$

\For {$i = 1..n$}
    \Comment{computing columns of $\Pi$}
    \For {$j = 1..m$}
        \For {$k = 1..r$}
            \State $\boldsymbol{V_{s}}
                \gets \Call{TMult}{
                    {\Psi}, E_{i\times j} c {\tau}^{k-1}}$
            \State $\boldsymbol{V_{s}} \gets
                \Call{Reduce1}{
                {\boldsymbol{V_{s}}}, {\Phi}, {\Psi}, {\boldsymbol{A_{n}^{-1}}}}
            $
            \State $\boldsymbol{V_{s}} \gets
             \Call{CoefficientForm}{\boldsymbol{V_{s}}, {d}}$
            \State $\Call{Substitute}{\boldsymbol{V_{s}}, c^{(\_)} \to {\tau}^{(\_)}}$
            \State $s \gets s + 1$
        \EndFor
    \EndFor
\EndFor

\State $\Pi \gets \begin{bmatrix}
    \boldsymbol{V_{1}} & ... & \boldsymbol{V_{s-1}} 
    \end{bmatrix}$
\State \Return $\Pi$

\EndFunction

\end{algorithmic}
\end{algorithm}

%%%%%%%%%%

 \section{Algorithm in case of a composition series}\label{sec5}
Since the category of \tm modules is not abelian  then  for a \tm module $\Upsilon$ and its \tm submodule $\Psi$
their quotient $\Upsilon/\Psi$ is usually not a \tm module (although it is an ${\mathbb F}_q[t]-$module since the category 
of \tm modules is a subcategory of the category of ${\mathbb F}_q[t]-$modules).
This motivates the following definitions:
\begin{definition}\label{submodule}
We say that a \tm module $\Psi$ is a $\tau-$ submodule of a \tm module $\Upsilon$ iff there exists a short 
exact sequence of \tm modules:
$$0\lra \Psi\lra \Upsilon\lra \Upsilon/\Psi\lra 0.$$
\end{definition}
It is clear that the dimension of a $\tau-$submodule $\Psi$ is always smaller than the dimension $\Upsilon$ 
    
\begin{definition}\label{submodule}
We say that a \tm module $\Psi$ is  $\tau-$simple if  it does not have any non-trivial $\tau -$ submodules i.e. it is not a middle term of any nontrivial short exact sequence of  \tm modules with nonzero terms.
\end{definition}	
Directly  from the 
Definition \ref{submodule} it follows that Drinfeld modules are $\tau-$simple.
On the other hand any \tm module $\Upsilon$ for which under some isomorphism the matrix ${\Upsilon}_t$ can be reduced to the lower triangular matrix is not $\tau$-simple. This easily follows from the description of an extension by means of a biderivation.
\begin{example}\label{ex1}
The \tm module given by the matrix
$$\Upsilon_t=\left[\begin{array}{cc}
	\theta & \tau\\
	\tau & \theta
	
\end{array}\right]\cong \left[\begin{array}{cc}
\theta+\tau & 0\\
\tau & \theta-\tau
\end{array}\right]$$
is not $\tau$-simple since the isomorphism 
$$f=\left[\begin{array}{cc}
	1&1\\
	0& 1
\end{array}\right].$$ 
brings $\Upsilon_t$ to the lower diagonal form.
\end{example} 

\begin{definition}
  By a $\tau-$composition  series of a \tm module $\Phi$ we mean a sequence of \tm modules:
  \begin{equation}\label{comp}
  0=\Phi_0\subset  \Phi_1 \subset  \Phi_2 \subset \cdots \subset  \Phi_{n-1}\subset  \Phi_n=\Phi
  \end{equation}
  such that ${\Phi}_i\subset{\Phi}_{i+1}$ is an inclusion of a $\tau -$ submodule and the quotient 
  \tm module ${\Phi}_{i+1}/{\Phi}_{i}$ is $\tau -$ simple.
  \end{definition}
 Let us  give a simple proof of the following:
\begin{thm}\label{existence}
Any \tm module is either $\tau -$simple or has non-trivial, i.e. $n\geq 2$,   $\tau -$ composition series   (\ref{comp}).
\end{thm}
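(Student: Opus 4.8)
The plan is to argue by induction on the dimension $d = \dim \Phi$. If $\Phi$ is $\tau$-simple there is nothing to prove, so assume it is not. By Definition~\ref{submodule} (the second one, on $\tau$-simplicity), $\Phi$ being non-$\tau$-simple means there is a nontrivial short exact sequence of \tm modules
$$0 \lra \Psi \lra \Phi \lra \Phi/\Psi \lra 0$$
with $\Psi$ and $\Phi/\Psi$ nonzero \tm modules; in particular $\Psi$ is a $\tau$-submodule of $\Phi$ in the sense of Definition~\ref{submodule} (the first one). Since $\Psi$ is a proper nonzero $\tau$-submodule, the remark following the definitions gives $1 \le \dim \Psi < d$, and, using that the quotient is a \tm module sitting in the same sequence, also $1 \le \dim(\Phi/\Psi) < d$.

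\textbf{The inductive step.} First I would settle the base case: a one-dimensional \tm module is a Drinfeld module, hence $\tau$-simple by the observation (stated just before Example~\ref{ex1}) that Drinfeld modules have no nontrivial $\tau$-submodules; so the statement holds vacuously for $d=1$. For the inductive step, assume the theorem holds for all \tm modules of dimension $< d$ and let $\Phi$ have dimension $d$ and be non-$\tau$-simple, with short exact sequence as above. Apply the induction hypothesis to $\Psi$: either $\Psi$ is $\tau$-simple, giving a $\tau$-composition series $0 = \Psi_0 \subset \Psi_1 = \Psi$ of length $1$, or it has a nontrivial $\tau$-composition series $0 = \Psi_0 \subset \cdots \subset \Psi_r = \Psi$. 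In either case we have a $\tau$-composition series of $\Psi$. Now set $\Phi_i = \Psi_i$ for $i \le r$ and $\Phi_{r+1} = \Phi$. The step $\Psi = \Phi_r \subset \Phi_{r+1} = \Phi$ is an inclusion of a $\tau$-submodule by hypothesis, and the quotient $\Phi_{r+1}/\Phi_r = \Phi/\Psi$ need not be $\tau$-simple — so we must instead apply the induction hypothesis to $\Phi/\Psi$ (whose dimension is $< d$) to refine this last step into a chain of $\tau$-submodule inclusions with $\tau$-simple quotients, and splice that chain on top. This produces a $\tau$-composition series of $\Phi$ of length $\ge 2$, completing the induction.

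\textbf{Main obstacle.} The delicate point is transitivity: if $\Phi_i \subset \Phi_{i+1}$ is a $\tau$-submodule inclusion and $\Phi_{i+1} \subset \Phi_{i+2}$ is a $\tau$-submodule inclusion, is $\Phi_i \subset \Phi_{i+2}$ again a $\tau$-submodule inclusion (so that the spliced chain is genuinely a $\tau$-composition series of $\Phi$ itself, not merely a filtration)? This requires that the short exact sequence $0 \to \Phi_i \to \Phi_{i+2} \to \Phi_{i+2}/\Phi_i \to 0$ be a sequence of \tm modules, i.e. that $\Phi_{i+2}/\Phi_i$ is again a \tm module. I would verify this by working with the explicit block-triangular matrix description of extensions via biderivations (as used, e.g., in the proof of Lemma~\ref{lem:algorytm_reduckji_dla_krotkiego_ciagu}): writing $\Phi_{i+2}$ in block lower-triangular form adapted to the flag $\Phi_i \subset \Phi_{i+1} \subset \Phi_{i+2}$, one reads off that $\Phi_{i+2}/\Phi_i$ carries the \tm module structure given by the lower-right block, and the quotient map and inclusion are morphisms of \tm modules. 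Likewise the refinement of $\Phi/\Psi$ obtained from the induction hypothesis must be lifted to a chain of $\tau$-submodules of $\Phi$ between $\Psi$ and $\Phi$, which is the same block-triangular bookkeeping applied to the preimages in $\Phi$ of the members of the composition series of $\Phi/\Psi$. Once this compatibility is in place, the induction runs without further difficulty and terminates because dimensions strictly decrease.
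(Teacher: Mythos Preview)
Your proposal is correct and follows essentially the same inductive strategy as the paper's proof. The only difference is one of packaging: the paper begins by observing that having a $\tau$-composition series is equivalent to being isomorphic to a \tm module given by a block lower-triangular matrix with $\tau$-simple blocks on the diagonal, and then the induction (applied simultaneously to both pieces of the short exact sequence) becomes a one-line check that nesting block lower-triangular matrices into the form~\eqref{Ut} is again block lower-triangular. This reformulation makes your ``main obstacle'' (transitivity of $\tau$-submodule inclusions and lifting the composition series of $\Phi/\Psi$ to a chain in $\Phi$) disappear into the matrix bookkeeping---which is exactly the resolution you arrive at anyway.
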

\begin{proof}
Notice that existence  of $\tau$-composition series:
	 
\begin{align*}
		0=\Upsilon_0&\subset \Upsilon_1 \subset \cdots \subset \Upsilon_{n-1} \subset \Upsilon_n=\Upsilon,\\
	\end{align*}
where the quotients $\sigma_i=\Upsilon_{i}/\Upsilon_{i-1}$ are  $\tau-$simple \tm modules is equivalent with the fact that  $\Upsilon$  is isomorphic with  a \tm module given by the block lower-triangular matrix where the blocks on the diagonal 
correspond to subsequent   $\tau-$simple modules
 $\sigma_{n}$, $\sigma_{n-1}$,$\dots$, $\sigma_{1}$. 
 So it is enough to show that every \tm module is given by the block lower-triangular matrix where on the diagonal one 
 has $\tau -$simple modules.
 If \tm module $\Upsilon$ is $\tau -$simple then there is nothing to prove.
 We use induction on dimension  $d$ of the non-simple \tm module. 
 Assume that $\Upsilon$ is not $\tau -$simple. In particular $d>1.$
 If $d=2$ then we the following exact sequence:
 $$0\rightarrow \sigma_{1} \rightarrow \Upsilon \rightarrow \sigma_{2} \rightarrow 0$$
 where $\sigma_{1}$ and $\sigma_{2}$ are Drinfeld modules.
  Therefore $\Upsilon$ is given by the following matrix: 
$$\Upsilon_t=\left[\begin{array}{c|c}
	\sigma_{2,t} & 0\\ \hline
	\delta_t & \sigma_{1,t}
\end{array}\right],$$
where $\delta_t$ is a biderivation. This shows the assertion for $d=2.$
Assume that the theorem holds true for all 
 \tm modules of dimension less than  $d$. Let $\Upsilon$ be of dimension $d$. Since $\Upsilon$ is not $\tau-$simple   
 there exists a short exact sequence of the form:
$0\lra \Phi\lra \Upsilon \lra \Psi\lra 0$. 
Then
\begin{equation}\label{Ut}
\Upsilon_t=\left[\begin{array}{c|c}
	\Psi_t & 0\\ \hline
	\delta_t & \Phi_t
\end{array}\right],
\end{equation}
for certain biderivation $\delta$. Since dimensions of $\Phi$ and $\Psi$ are less than $d$ by inductive hypothesis we can consider $\Phi_t$ and  $\Psi_t$ as block lower-triangular matrices with  $\tau-$simple  \tm modules on diagonals. 
Thus the same holds true for $\Upsilon_t$.   
\end{proof}

\begin{example}\label{prod}
The product $\Pi_{i=1}^n\phi_i$ of  Drinfeld modules  $\phi_i$ has the following $\tau -$composition series:
\begin{equation}\label{prod1}
	0\subset \phi_1\subset \phi_1\times \phi_2\subset \cdots \subset \Pi_{i=1}^{n-1}\phi_i
	\subset \Pi_{i=1}^{n}\phi_i,
	\end{equation}
	where quotients of consecutive $\tau -$submodules are Drinfeld modules (which are $\tau -$simple \tm modules). 
\end{example}
\begin{thm}
	\label{thm:ciagi_kompozycyjne}
	Let $\Upsilon$ and $\widehat{\Upsilon}$ be \tm modules having the following composition series: 
		\begin{align*}
		0=\Upsilon_0&\subset \Upsilon_1 \subset \cdots \subset \Upsilon_{n-1} \subset \Upsilon_n=\Upsilon,\\
		0=\widehat{\Upsilon}_0&\subset \widehat{\Upsilon}_1 \subset \cdots \subset \widehat{\Upsilon}_{m-1} \subset \widehat{\Upsilon}_m=\widehat{\Upsilon},\\
	\end{align*}
	such that for $i=1,2,\cdots, n$ and $j=1,2,\cdots, m$ the following conditions are fulfilled:
	\begin{itemize}
		\item[$(i)$] the quotients $\sigma_i=\Upsilon_i/\Upsilon_{i-1}$  and 
		$\widehat{\sigma}_j=\widehat{\Upsilon}_j/\widehat{\Upsilon}_{j-1}$ are $\tau$-simple \tm modules;
		\item[$(ii)$]  $\Hom_{\tau}(\sigma_i,\widehat{\sigma}_j)=0, \,\,i=1,\dots ,n; \,\,j=1,\dots ,m$; 
		\item[$(iii)$] for every pair $(i,j),\,\,i\in\{1,2,\dots,n\}$ and $j\in\{1,2,\dots,m\}$ the spaces $\Ext^1_{\tau}(\sigma_i, \widehat{\sigma}_j)$ have  \tm module structures coming from 
		\tm reduction.
	\end{itemize}
	Then the space $\Ext^1_{\tau}(\Upsilon, \widehat{\Upsilon})$ has a \tm module structure coming from \tm reduction.  
\end{thm}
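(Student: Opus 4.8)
The plan is to reduce the theorem to Lemma~\ref{lem:algorytm_reduckji_dla_krotkiego_ciagu} by a double induction on the lengths $n$ and $m$ of the two $\tau$-composition series. Before starting the induction I would first establish the auxiliary vanishing statements $\Hom_\tau(\sigma_i,\widehat{\Upsilon})=0$ for every $i$ and $\Hom_\tau(\Upsilon_k,\widehat{\Upsilon})=0$ for every $k$, since these are what the hypotheses of Lemma~\ref{lem:algorytm_reduckji_dla_krotkiego_ciagu} will demand at each step. For the first, I would apply $\Hom_\tau(\sigma_i,-)$ to the short exact sequence $0\to\widehat{\Upsilon}_{j-1}\to\widehat{\Upsilon}_j\to\widehat{\sigma}_j\to 0$ and use the left-exact part of Theorem~\ref{thm:long_sequence}$(i)$: this squeezes $\Hom_\tau(\sigma_i,\widehat{\Upsilon}_j)$ between $\Hom_\tau(\sigma_i,\widehat{\Upsilon}_{j-1})$ and $\Hom_\tau(\sigma_i,\widehat{\sigma}_j)$, so an induction on $j$ together with condition $(ii)$ gives $\Hom_\tau(\sigma_i,\widehat{\Upsilon})=0$. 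Applying $\Hom_\tau(-,\widehat{\Upsilon})$ to $0\to\Upsilon_{k-1}\to\Upsilon_k\to\sigma_k\to 0$ and invoking Theorem~\ref{thm:long_sequence}$(ii)$ in the same way, an induction on $k$ (using what was just shown) yields $\Hom_\tau(\Upsilon_k,\widehat{\Upsilon})=0$.

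Next I would carry out the first half: fixing a subquotient $\sigma_i$, I would prove by induction on $m$ that $\Ext^1_\tau(\sigma_i,\widehat{\Upsilon})$ carries a $\mathbf{t}$-module structure coming from $\mathbf{t}$-reduction. The base case $m=1$ is hypothesis $(iii)$. For the inductive step I would apply Lemma~\ref{lem:algorytm_reduckji_dla_krotkiego_ciagu}$(i^D)$ to $0\to\widehat{\Upsilon}_{m-1}\to\widehat{\Upsilon}\to\widehat{\sigma}_m\to 0$ with $\zeta=\sigma_i$: its hypothesis $\Hom_\tau(\sigma_i,\widehat{\sigma}_m)=0$ is part of $(ii)$; $\Ext^1_\tau(\sigma_i,\widehat{\Upsilon}_{m-1})$ comes from $\mathbf{t}$-reduction by the inductive hypothesis (the truncated series $0\subset\widehat{\Upsilon}_1\subset\cdots\subset\widehat{\Upsilon}_{m-1}$ still satisfies $(i)$--$(iii)$, since its subquotients form a subset of the original ones); and $\Ext^1_\tau(\sigma_i,\widehat{\sigma}_m)$ comes from $\mathbf{t}$-reduction by $(iii)$. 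The lemma then gives the claim for $\widehat{\Upsilon}$.

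Then I would run the outer induction on $n$. The base case $n=1$ is exactly the previous step applied to $\Upsilon=\sigma_1$. For the inductive step I would apply Lemma~\ref{lem:algorytm_reduckji_dla_krotkiego_ciagu}$(i)$ to $0\to\Upsilon_{n-1}\to\Upsilon\to\sigma_n\to 0$ with $\zeta=\widehat{\Upsilon}$: the hypothesis $\Hom_\tau(\Upsilon_{n-1},\widehat{\Upsilon})=0$ is the auxiliary vanishing established above; $\Ext^1_\tau(\Upsilon_{n-1},\widehat{\Upsilon})$ comes from $\mathbf{t}$-reduction by the inductive hypothesis; and $\Ext^1_\tau(\sigma_n,\widehat{\Upsilon})$ comes from $\mathbf{t}$-reduction by the first half of the argument. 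The lemma then yields that $\Ext^1_\tau(\Upsilon,\widehat{\Upsilon})$ has a $\mathbf{t}$-module structure coming from $\mathbf{t}$-reduction, which is the assertion.

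I expect the only genuinely non-formal ingredient to be the propagation of the $\Hom_\tau$-vanishing in the preliminary step; everything after that is a careful but routine packaging of Lemma~\ref{lem:algorytm_reduckji_dla_krotkiego_ciagu}, which already carries the substantive content (exhibiting, on the relevant block-triangular extensions, explicit bases $(U_{i\times j})$ fulfilling conditions A.--D. of Step~2 of the $\mathbf{t}$-reduction algorithm). A small point to keep in mind is that all the short exact sequences $0\to\Upsilon_{k-1}\to\Upsilon_k\to\sigma_k\to 0$ and $0\to\widehat{\Upsilon}_{j-1}\to\widehat{\Upsilon}_j\to\widehat{\sigma}_j\to 0$ exist by the very definition of a $\tau$-composition series, so Theorem~\ref{thm:long_sequence} and Lemma~\ref{lem:algorytm_reduckji_dla_krotkiego_ciagu} genuinely apply to them at every stage.
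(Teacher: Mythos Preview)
Your proposal is correct and follows essentially the same approach as the paper: a double induction along the two composition series, using Theorem~\ref{thm:long_sequence} to propagate the $\Hom_\tau$-vanishing and Lemma~\ref{lem:algorytm_reduckji_dla_krotkiego_ciagu} (parts $(i)$ and $(i^D)$) to propagate the ``comes from $\mathbf{t}$-reduction'' property. The only cosmetic differences are that the paper runs the inductions in the opposite order (it first builds up $\Upsilon$ in the first argument against each $\widehat{\sigma}_j$, then builds up $\widehat{\Upsilon}$ in the second argument against $\Upsilon$) and interleaves the $\Hom_\tau$-vanishing with the $\Ext$ step rather than isolating it as a preliminary lemma; neither choice affects the substance of the argument.
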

\begin{proof}
For $i=1,2,\dots, n$ and $j=1,2,\dots, m$	we introduce the following notation for exact sequences of \tm modules:
	\begin{align*}
		\eta_{i}:&\quad 0\lra \Upsilon_{i-1} \lra \Upsilon_i\lra \sigma_i\lra 0\\
		\widehat{\eta}_{j}:&\quad 0\lra \widehat{\Upsilon}_{j-1} \lra \widehat{\Upsilon}_j\lra \widehat{\sigma}_j\lra 0.\\
	\end{align*}
Applying the functor
 $\Hom_{\tau}( -,\widehat{\sigma}_j )$ to the sequence $\eta_2$  by Theorem \ref{thm:long_sequence} we obtain  the following six term exact sequence:
\begin{align*}
	0&\lra \nadwzorem{\Hom_{\tau}( \sigma_2,\widehat{\sigma}_j )}{=0}\lra \Hom_{\tau}( \Upsilon_2,\widehat{\sigma}_j )\lra \nadwzorem{\Hom_{\tau}( \Upsilon_1,\widehat{\sigma}_j )}{=0}\lra \\
	&\lra \Ext^1_{\tau}(  \sigma_2,\widehat{\sigma}_j )\lra \Ext^1_{\tau}( \Upsilon_2 ,\widehat{\sigma}_j )\lra \Ext^1_{\tau}( \Upsilon_1,\widehat{\sigma}_j )\lra 0.\\
\end{align*}
Then $\Hom_{\tau}(\Upsilon_2,\widehat{\sigma_j})=0$ and applying Lemma \ref{lem:algorytm_reduckji_dla_krotkiego_ciagu} to the lower exact sequence we see that the space   $\Ext^1_{\tau}( \Upsilon_2 ,\widehat{\sigma}_j )$ has a \tm module structure coming from \tm reduction. 
Proceeding analogously for  $\eta_3, \eta_4, \dots, \eta_n$ we obtain that 
$\Hom_{\tau}(\Upsilon,\widehat{\sigma_j})=0$ and the space
$\Ext^1_{\tau}( \Upsilon ,\widehat{\sigma}_j )$  has a \tm module structure  coming from \tm reduction. 
Applying the functor $\Hom_{\tau}(\Upsilon,-)$ to the sequence $\widehat{\eta}_2$  again by Theorem \ref{thm:long_sequence} we obtain the following six term exact sequence:
\begin{align*}
	0&\lra \nadwzorem{\Hom_{\tau} (\Upsilon, \widehat{\Upsilon}_1)}{=0}\lra   \Hom_{\tau} (\Upsilon, \widehat{\Upsilon}_2)\lra \nadwzorem{\Hom_{\tau} (\Upsilon, \widehat{\sigma}_2)}{=0}\lra \\
	&\lra\Ext^1_{\tau}(\Upsilon, \widehat{\Upsilon}_1)\lra  \Ext^1_{\tau}(\Upsilon, \widehat{\Upsilon}_2)\lra  \Ext^1_{\tau}(\Upsilon, \widehat{\sigma}_2)\lra 0
\end{align*}
Thus $\Hom_{\tau} (\Upsilon, \widehat{\Upsilon}_2)=0$  and applying again Lemma \ref{lem:algorytm_reduckji_dla_krotkiego_ciagu} to the lower row we obtain that  
$  \Ext^1_{\tau}(\Upsilon, \widehat{\Upsilon}_2)$ has a \tm module structure coming from \tm reduction. 
Proceeding in this way for sequences  $\widehat{\eta_j},\, j=3,4,\dots,m$ we finally obtain that 
 $\Ext^1_{\tau}(\Upsilon, \widehat{\Upsilon})$ has a  \tm module structure coming from \tm reduction. 
\end{proof}

We have the following corollary of Theorem \ref{thm:ciagi_kompozycyjne}:
\begin{cor}\label{cor:ciagi_kompozycyjne}
	Let $\Upsilon$ and $\widehat{\Upsilon}$ be \tm modules with the following $\tau-$ composition series: 
	\begin{align*}
		0=\Upsilon_0&\subset \Upsilon_1 \subset \cdots \subset \Upsilon_{n-1} \subset \Upsilon_n=\Upsilon,\\
		0=\widehat{\Upsilon}_0&\subset \widehat{\Upsilon}_1 \subset\cdots \subset \widehat{\Upsilon}_{m-1} \subset\widehat{\Upsilon}_m=\widehat{\Upsilon},\\
	\end{align*}
where quotients $\sigma_i=\Upsilon_i/\Upsilon_{i-1}$  and 
$\widehat{\sigma}_j=\widehat{\Upsilon}_j/\widehat{\Upsilon}_{j-1}$ are Drinfeld modules such that  
$\rk\sigma_i>\rk\widehat{\sigma}_j$ for every pair $(i,j),\,\,i\in\{1,2,\dots,n\}$ and $j\in\{1,2,\dots,m\}$. Then 
the space 
$\Ext^1_{\tau}(\Upsilon, \widehat{\Upsilon})$ has a structure of a \tm module coming from \tm reduction. 
\end{cor}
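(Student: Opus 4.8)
The plan is to deduce the corollary as an immediate application of Theorem \ref{thm:ciagi_kompozycyjne}: it suffices to verify that, for the given $\tau$-composition series of $\Upsilon$ and $\widehat{\Upsilon}$ whose successive quotients are the Drinfeld modules $\sigma_i$ and $\widehat{\sigma}_j$, all three hypotheses $(i)$, $(ii)$, $(iii)$ of that theorem are satisfied. Hypothesis $(i)$ requires no work, since, as observed right after Definition \ref{submodule}, every Drinfeld module is $\tau$-simple.

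For hypothesis $(ii)$ I would use the elementary degree argument. Since $\sigma_i$ and $\widehat{\sigma}_j$ are one-dimensional, a morphism $f\in\Hom_{\tau}(\sigma_i,\widehat{\sigma}_j)$ is an element of $K\{\tau\}$ satisfying $f\,\sigma_{i,t}=\widehat{\sigma}_{j,t}\,f$. As $K\{\tau\}$ is a domain ($K$ being a field), $\deg_{\tau}$ is additive, so if $f\neq 0$ one would get $\deg_{\tau}f+\rk\sigma_i=\rk\widehat{\sigma}_j+\deg_{\tau}f$, i.e. $\rk\sigma_i=\rk\widehat{\sigma}_j$, contradicting $\rk\sigma_i>\rk\widehat{\sigma}_j$. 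Hence $\Hom_{\tau}(\sigma_i,\widehat{\sigma}_j)=0$ for every pair $(i,j)$.

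For hypothesis $(iii)$ I would invoke Theorem \ref{thmm} applied with $\Phi=\sigma_i$ and $\Psi=\widehat{\sigma}_j$. Writing $\sigma_{i,t}=\theta+\sum_{k=1}^{\rk\sigma_i}b_k\tau^k$, the condition $\rk\sigma_i=\deg_{\tau}\sigma_{i,t}$ forces $b_{\rk\sigma_i}\neq 0$, so the $1\times 1$ leading matrix $A_{\rk\sigma_i}=[\,b_{\rk\sigma_i}\,]$ is invertible over $K$; together with the hypothesis $\rk\sigma_i>\rk\widehat{\sigma}_j$, part $(i)$ of Theorem \ref{thmm} shows that $\Ext^1_{\tau}(\sigma_i,\widehat{\sigma}_j)$ carries a \tm module structure coming from \tm reduction (this special case is also contained in \cite{kk04}). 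With $(i)$, $(ii)$ and $(iii)$ in hand, Theorem \ref{thm:ciagi_kompozycyjne} then gives at once that $\Ext^1_{\tau}(\Upsilon,\widehat{\Upsilon})$ has a \tm module structure coming from \tm reduction. There is no genuine obstacle in this argument; the only point calling for a moment's care is the reduction of the invertibility hypothesis of Theorem \ref{thmm} to the nonvanishing of the leading coefficient of $\sigma_{i,t}$, which in the Drinfeld case is built into the very definition of its $\tau$-degree.
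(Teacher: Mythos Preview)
Your proposal is correct and matches the paper's approach: the paper states this result simply as a corollary of Theorem~\ref{thm:ciagi_kompozycyjne} without further proof, and you have accurately supplied the verification of hypotheses $(i)$--$(iii)$ (Drinfeld modules are $\tau$-simple, the degree argument kills $\Hom_\tau$, and Theorem~\ref{thmm} or \cite{kk04} handles the $\Ext^1_\tau$ of two Drinfeld modules). There is nothing to add.
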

\begin{remark}
Notice that Corollary \ref{cor:ciagi_kompozycyjne} is a generalization of \cite[Theorem 6.1.]{kk04} which asserts that $\Ext^1_{\tau}$ for products of  Drinfeld modules, with the appropriate assumption on  degrees, has a  \tm module structure.
\end{remark}

\begin{algorithm}[H]
\caption{}\label{alg:helper3}
\begin{algorithmic}[1]

\Function{Reduce2}{${\boldsymbol{V}}, {\Phi}, {\Psi}$}

\Comment{Reduction of t-module for lower triangular t-modules}

\Input
\Desc{$\boldsymbol{V}$}{module to be reduced}
\Desc{$\Phi$}{t-module denoted as $\Upsilon$ in Corollary \ref{cor:ciagi_kompozycyjne}}
\Desc{$\Psi$}{t-module denoted as $\widehat\Upsilon$ in Corollary \ref{cor:ciagi_kompozycyjne}}
\EndInput

\Output
\Desc{$\boldsymbol{V}$}{reduced module}
\EndOutput

\State $n \gets \Call{Rows}{\boldsymbol{V}}$
\State $m \gets \Call{Columns}{\boldsymbol{V}}$

  \Comment{we reduce degree of $\tau$ polynomial at position $i$, $j$\\ \qquad\qquad\qquad\qquad\quad in 
        $\boldsymbol{V}$ to degree less than $\Phi [j, j]$, columns are  redu-\\ \qquad\qquad\qquad\qquad\quad ced  in
        reverse order}

\For {$i = 1..n$}
    \For {$j = m..1$}
      
        \State $r' \gets \Call{Deg}{\boldsymbol{V}[i,j]}$
        \State $r \gets \Call{Deg}{\Phi [j, j]}$
  %    \algstore{myalg}
%\end{algorithmic}
%\end{algorithm}
%\begin{algorithm}                     
%\begin{algorithmic} [1]                   % enter the algorithmic environment
%\algrestore{myalg}
 
    \algstore{myalg5}
\end{algorithmic}
\end{algorithm}

\begin{algorithm}
\begin{algorithmic}
\algrestore{myalg5}       
     
      \While {$ r' \geq r $}
            \State $a \gets \frac{\Call{Coefficient}{\boldsymbol{V}[i,j], r'}}
            {(\Call{Coefficient}{\Phi [j,j], r})^{(r'-r)}} $
            \State $\boldsymbol{G} \gets 
            {E_{i\times j} a {\tau}^{r'-r}} $
            \State $\boldsymbol{G} \gets \Call{TMult}{{\boldsymbol{G}},
            {\Phi}} - \Call{TMult}{{\Psi}, {\boldsymbol{G}}}$
            \State $\boldsymbol{V} \gets \boldsymbol{V}-\boldsymbol{G} $
            \State $r' \gets r' - 1$
        \EndWhile                
    \EndFor
\EndFor
\State \Return $\boldsymbol{V}$
\EndFunction
\\
\end{algorithmic}
\end{algorithm}

\begin{algorithm}[H]
\caption{Computing extension of triangular t-modules}\label{alg:algorithm2}
\begin{algorithmic}[1]

\Function{Extension2}{$\Phi$, $\Psi$}

\Input
\Desc{$\Phi$, $\Psi$}{t-modules}
\EndInput

\Output
\Desc{$\Pi$}{extension of $\Phi$ by $\Psi$}
\EndOutput

\State $n \gets \Call{Dim}{\Phi}$
\State $m \gets \Call{Dim}{\Psi}$
\State $d$ : $n \times m$ matrix

\For {$i = 1..n$}
\Comment{computing degrees of basis elements}
\For {$j = 1..m$}
\State $d[i,j] \gets \Call{Deg}{\Phi[j,j]}$
\EndFor
\EndFor

\State $s \gets 1$
\For {$i = 1..n$}
\Comment{computing columns of $\Pi$}
    \For {$j = 1..m$}
        \State $r \gets \Call{Deg}{\Phi[j,j]}$
        \For {$k = 1..r$}
            \State $\boldsymbol{V_{s}}
                \gets \Call{TMult}{{\Psi}, {E_{i\times j} c {\tau}^{k-1}}}$
            \State $\boldsymbol{V_{s}} \gets
                \Call{Reduce2}
                {{\boldsymbol{V_{s}}}, {\Phi}, {\Psi}}
            $
            \State $\boldsymbol{V_{s}} \gets
                \Call{CoefficientForm}{{\boldsymbol{V_{s}}}, {d}}$
            \State $\Call{Substitute}{\boldsymbol{V_{s}}, c^{(\_)} \to {\tau}^{(\_)}}$
            \State $s \gets s + 1$                 
        \EndFor
    \EndFor
\EndFor
\State $\Pi \gets \begin{bmatrix}
    \boldsymbol{V_{1}} & ... & \boldsymbol{V_{s-1}} 
    \end{bmatrix}$
\State \Return $\Pi$

\EndFunction
\end{algorithmic}
\end{algorithm}

%%%%%%%%%%%%%%%%%%%%%%%%%%%%%%%%%%%%

\section{Exact formulas for $\Ext^1$ for two Drinfeld modules}\label{sec6}
  In most cases analyzed in this paper, it is unreasonable to expect getting exact formulas for $\Ext^1$ since they are too complicated. 
In this section we present exact formulas, for the easiest case, describing 
 \tm module structure on $\Ext^1_{\tau}(\phi, \psi)$ for two Drinfeld modules.
So assume that $\phi$ and $\psi$ are Drinfeld modules, such that $\phi_t=\theta+\sum\limits_{i=1}^na_i\tau^i$, $\psi_t=\theta+\sum\limits_{j=1}^mb_j\tau^j$ and $r:=n-m>0$.  
We transfer the $\F_q[t]-$module structure from $\Ext^1_{\tau}(\phi, \psi)=\Der(\phi, \psi)/\Derin(\phi, \psi)$ to the space $K\{\tau\}_{<\rk\phi}$, via the isomorphism from Lemma 3.1 of \cite{kk04}. Since each element of $K\{\tau\}_{<\rk\phi}$ is of the form $\sum\limits_{i=0}^{n-1}c_i\tau^i$, it is sufficient to determine the values of  multiplication by $t$ on the generators $c_i\tau^i$ for $i=0,1,\dots, n-1$, where $c_i\in K$. Our reduction algorithm implies that the \tm module structure on 
	$\Ext^1_{\tau}(\phi, \psi)$ is given by the following matrix:
		\begin{equation}\label{matrix_case_1}
		\Pi_t=\Bigg[\big[t*\tau^{0}\big],\big[t*\tau^{1}\big],\dots,\big[t*\tau^{r-1}\big],\big[t*\tau^{r}\big],\big[t*\tau^{r+1}\big],\dots, \big[t*\tau^{n-1}\big]\Bigg].
	\end{equation}
 defining the map $\Pi:\F_q[t]\lra {\mathrm{M}}_n(K)\{\tau\}.$  	 In order to describe $\big[t*\tau^{i}\big]$ we introduce the following notation: $b_k\tau^k|_{c_i}:= \up{c_i}kb_k$ for $k=1,\dots, m$ and $i=1,\dots, \rk\phi-1.$ Let $e_i=\tau^i$ for $i=0,1,\cdots, n-1$ be the basis of
	 $K\{\tau\}_{<\rk\phi}.$ Then  the column vector $\big[t*\tau^{i}\big]$ is defined by the following property:
	$t*(c_i\tau^{i})=\big[t*\tau^{i}\big]|_{c_i}.$
	 
	 \textbf{In the case $m>r$}  we obtain the following formula
	 \begin{align*}
		\big[t*\tau^{i}\big]&=  \big[\podwzorem{ 0,\dots, 0}{i-1}, \theta, b_1\tau, b_2\tau^2,\dots, b_m\tau^m,0,\dots,0 \big]^t &&\textnormal{for}&& i=0,1,2,\dots r-1.
	\end{align*} 
	 	
	Fix $l\in\{0,1,\dots, m-1\}$ and define recursively the following polynomials:
	\begin{align}\label{polynomials_d_case_1}
		d_{r+l,n+k}(\tau)=\dfrac{b_{m-l+k}\tau^{m-l+k} -\sum\limits_{i=k+1}^{l}\up{a_{n+k-i}}{i}d_{r+l,n+i}(\tau)  }{\up{a_n}k} 
	\end{align} 
	for $k=l, l-1, \dots, 1,0.$
	Then we can express
	$\big[t*\tau^{r+l}\big]$ in an explicit way:
	\begin{align*}
		\big[t*&\tau^{r+l}\big]=\\
		=& \Bigg[0, \Big[ \big(\theta - \up{\theta} {j}\big)d_{r+l,n+j}(\tau) +	 
		\sum\limits_{k=0}^{j-1}\big( b_{j-k}\up{d_{r+l,n+k}}{j-k}(\tau)  - \up{a_{j-k}}k d_{r+l,n+k}(\tau) \big)   \Big]_{j=1}^{l} \\
		&,\Big[\sum\limits_{k=0}^{l}\Big( b_{j-k}\up{d_{r+l,n+k}}{j-k}(\tau)  - \up{a_{j-k}}k d_{r+l,n+k}(\tau)    \Big) \Big]_{j=l+1}^{m-1},\\
		&,\Big[ \sum\limits_{k=j-m}^{l} b_{j-k}\up{d_{r+l,n+k}}{j-k}(\tau)   - \sum\limits_{k=0}^{l} \up{a_{j-k}}k d_{r+l,n+k}(\tau)    \Big]_{j=m}^{m+l}, \\	
		&, \Big[ - \sum\limits_{k=0}^{l} d_{r+l,n+k} \up{a_{j-k}}k   \Big]_{j=m+l+1}^{r+l-1},
		\theta - \sum\limits_{k=0}^{l} \up{a_{r+l-k}}k d_{r+l,n+k}(\tau), \\
		&, \Big[ b_{j-r-l}\tau^{j-r-l}- \sum\limits_{k=0}^{l} \up{a_{j-k}}k d_{r+l,n+k}(\tau) \Big]_{j=r+l+1}^{n-1}\Bigg]^t.
	\end{align*}
	
	\begin{remark}
		Notice that in the above formula the notation $[f(j)]_{j=u}^v$ means $[f(u)],[f(u+1)],\dots [f(v)].$
	\end{remark}
	
	\textbf{In the case $m=r$} we obtain the following coordinates
	\begin{align*}
		\big[t*\tau^{i}\big]&=  \big[\podwzorem{ 0,\dots, 0}{i-1}, \theta, b_1\tau, b_2\tau^2,\dots, b_m\tau^m,0,\dots,0 \big]^t &&\textnormal{for}&& i=1,2,\dots r-1.
	\end{align*} 
	\begin{align*}
		\big[t*\tau^{r+l}\big]=&\Bigg[0,\Big[\big(\theta -\up\theta j\big)d_{r+l,n+1}(\tau)
		+\sum\limits_{k=0}^{j-1}\Big(b_{j-k}\up{d_{r+l,n+k}}{j-k}(\tau)-\up {a_{j-k}}kd_{r+l,n+k}(\tau) \Big)\Big]_{j=1}^l,\\
		&,  \Big[\sum\limits_{k=0}^{l}\Big(b_{j-k}\up{d_{r+l,n+k}}{j-k}(\tau)-\up {a_{j-k}}kd_{r+l,n+k}(\tau) \Big)\Big]_{j=l+1}^m, \\
		&,\Big[\sum\limits_{k=j-m}^{l}b_{j-k} \up{d_{r+l,n+k}}{(j-k)}(\tau) - \sum\limits_{k=0}^{l}\up {a_{j-k}}kd_{r+l,n+k}(\tau)\Big]_{j=m+1}^{r+l-1},\\
		&, \Big[ \theta+ b_m\up{d_{r+l,n+l}}m(\tau)  - 
		\sum\limits_{k=0}^{l}\up {a_{r+l-k}}k d_{r+l,n+k}(\tau)\Big],\\
		&, \Big[b_{j-r-l}\tau^{j-r-l} - 
		\sum\limits_{k=0}^{l}\up {a_{j-k}}kd_{r+l,n+k}(\tau) \Big]_{j=r+l+1}^{n-1}
		\Bigg]^t,
	\end{align*} 
	where the polynomials $d_{r+l,n+k}(\tau)$ are the same as before, see (\ref{polynomials_d_case_1}).\\
		\textbf{In the case $m>r$} we obtain the following coordinates
	\begin{align*}
		\big[t*\tau^{i}\big]&=  \big[\podwzorem{ 0,\dots, 0}{i-1}, \theta, b_1\tau, b_2\tau^2,\dots, b_m\tau^m,0,\dots,0 \big]^t &&\textnormal{for}&& i=1,2,\dots r-1.
	\end{align*} 
	To describe the remaining coordinates we must consider two sub-cases: $r+l\leq m$ or $r+l>m$. \\
	In the sub-case $r+l\leq m$ we have:
	\begin{align*}
		\big[t*\tau^{r+l}\big]&=\Bigg[0,\big(\theta -\up\theta j\big)\Big[d_{r+l,n+j}(\tau)+ \sum\limits_{k=0}^{j-1}\Big(\up{b_{j-k}d_{r+l,n+k}}{j-k}(\tau)-\up {a_{j-k}}k d_{r+l,n+k}(\tau) \Big)\Big]_{j=1}^{l}\\
		&,\Big[\sum\limits_{k=0}^{l}\Big(b_{j-k}\up{d_{r+l,n+k}}{j-k}(\tau)-\up {a_{j-k}}k d_{r+l,n+k}(\tau) \Big)\Big]_{j=l+1}^{r+l-1}, \\
		&+\Big[\theta+ \sum\limits_{k=0}^{l}\Big(\up{b_{r+l-k}d_{r+l,n+k}}{r+l-k}(\tau)-\up {a_{r+l-k}}k d_{r+l,n+k}(\tau) \Big)\Big], \\
		&,\Big[b_{j-r-l}\tau^{j-r-l} + \sum\limits_{k=0}^{l}\Big(b_{j-k}\up{d_{r+l,n+k}}{j-k}(\tau)-\up{a_{j-k}}k d_{r+l,n+k}(\tau) \Big)\Big]_{j=r+l+1}^{m}, \\
		%%%%%%%%%%%%%%%%%%%%%
		&,\Big[b_{j-r-l}\tau^{j-r-l} + \sum\limits_{k=j-m}^{l}b_{j-k}\up{d_{r+l,n+k}}{j-k}(\tau) - \sum\limits_{k=0}^{l}\up {a_{j-k}}k d_{r+l,n+k}(\tau)
		\Big]_{j=m+1}^{m+l}\\
		&,\Big[b_{j-r-l}\tau^{j-r-l} - 
		\sum\limits_{k=0}^{l}\up {a_{j-k}}k d_{r+l,n+k}(\tau)\Big]_{j=m+l+1}^{n-1}\Bigg]^t,
	\end{align*} 
	where the polynomials $d_{r+l,n+k}(\tau)$ are given recursively by the formula: 
	\begin{align}\label{polynomial_d_case_3}
		d_{r+l,n+k}(\tau)=&\dfrac{b_{m-l+k}\tau^{m-l+k}-\sum\limits_{i=k+1}^{l}\up{a_{n+k-i}}{i}d_{r+l,n+i}(\tau)  +\sum\limits_{i=k+r}^{l}b_{n+k-i}\up{d_{r+l,n+i}}{n+k-i}(\tau)}{\up{a_n}{k}} 
	\end{align} 
	for $k=l, l-1, \dots, 1,0$.\\
	In the sub-case $r+l> m$ we have:
	\begin{align*}
		\big[t*\tau^{r+l}\big]&=\Bigg[0,\Big[\big(\theta -\up\theta j\big)d_{r+l,n+j}(\tau)+ \sum\limits_{k=0}^{j-1}\Big(b_{j-k}\up{d_{r+l,n+k}}{j-k}(\tau)-\up {a_{j-k}}kd_{r+l,n+k}(\tau) \Big)\Big]_{j=1}^{l}\\
		&,\Big[\sum\limits_{k=0}^{l}\Big(b_{j-k}\up{d_{r+l,n+k}}{j-k}(\tau)-\up {a_{j-k}}k d_{r+l,n+k}(\tau) \Big)\Big]_{j=l+1}^{m}, \\
		&,\Big[ \sum\limits_{k=j-m}^{l}\up{b_{j-k} d_{r+l,n+k}}{j-k}(\tau)  - \sum\limits_{k=0}^{l}\up {a_{j-k}}k d_{r+l,n+k}(\tau)
		\Big]_{j=m+1}^{r+l-1},\\
		&,\Big[\theta+ \sum\limits_{k=r+l-m}^{l}b_{r+l-k}\up{d_{r+l,n+k}}{r+l-k}(\tau) - \sum\limits_{k=0}^{l}\up {a_{r+l-k}}k d_{r+l,n+k}(\tau)\Big], \\
		&,\Big[b_{j-r-l}\tau^{j-r-l} 
		+\sum\limits_{k=j-m}^{l}b_{j-k}\up{d_{r+l,n+k}}{j-k}(\tau) - \sum\limits_{k=0}^{l}\up {a_{j-k}}k d_{r+l,n+k}(\tau) \Big]_{j=r+l+1}^{m+l},\\
		&,\Big[b_{j-r-l}\tau^{j-r-l}  
		- \sum\limits_{k=0}^{l}\up {a_{j-k}}kd_{r+l,n+k}(\tau)\Big]_{j=m+l+1}^{n-1}\Bigg]^t,
	\end{align*}  
	where the polynomials $d_{r+l,n+k}(\tau)$ are given by (\ref{polynomial_d_case_3}).
From the calculated forms of $\Pi_t$, we  observe that 
	\begin{itemize}
		\item[$(i)$] The polynomials $d_{*,*}(\tau)$ used in the description of $\Pi_t$ do not have constant terms. 
		\item[$(ii)$] If we write $\Pi_t$ in the form 
		$\Pi_t=(I\theta +N)\tau^0 +\sum\limits_{i=1}^sA_i\tau^i,$
		then the matrix $N$ vanishes. 
	\end{itemize}

\subsection{Some consequences of exact formulas}
\begin{prop}
	Let $(K,v)$ be a valuation field. 
	Assume that $\phi$ and $\psi$ are Drinfeld modules, such that  $r:=\rk\phi-\rk\psi>0$ and 
	$\phi_t=\theta+\sum\limits_{i=1}^na_i\tau^i$, $\psi_t=\theta+\sum\limits_{j=1}^mb_j\tau^j$.
	If the following conditions
	\begin{itemize}
		\item[$(i)$] $\psi_t$ has integer coefficients,
		\item[$(ii)$] $\phi_t-a_n\tau^n$ has integer coefficients,
		\item[$(iii)$] $v(a_n)<\min\Bigg\{v\big(b_1\big), q^{i}v\big(a_{n-i}\big), \dfrac {v\big(b_{m+1-i}\big)}{q^{m-i}}   \mid i=1,2,\dots, m-1\Bigg\}$,
	\end{itemize}
	hold true,	then \tm module $\Ext^1_{\tau}(\phi, \psi)$ has integer coefficients.  
\end{prop}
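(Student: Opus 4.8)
Write $\mathcal{O}_v=\{x\in K:v(x)\ge 0\}$ for the valuation ring; ``a matrix over $K\{\tau\}$ has integer coefficients'' means that every coefficient of every entry lies in $\mathcal{O}_v$. Recall $v(x^{(s)})=v(x^{q^s})=q^{s}v(x)$, so Frobenius twists merely rescale valuations by powers of $q$. Put $n=\rk\phi$, $m=\rk\psi$ and $\alpha=v(a_n)$; hypotheses $(i)$ and $(ii)$ say exactly that $\theta\in\mathcal{O}_v$, that $b_j\in\mathcal{O}_v$ for $1\le j\le m$ and that $a_i\in\mathcal{O}_v$ for $1\le i\le n-1$, so the only possibly non-integral quantity is $a_n$. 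The plan is to read off the answer from the explicit description of Section \ref{sec6}: the multiplication-by-$t$ matrix $\Pi_t$ of \eqref{matrix_case_1} has entries that are $K\{\tau\}$-linear combinations, built from additions and Frobenius twists only, of $\theta$, the $\theta-\theta^{(j)}$, the $b_j$ with $j\le m$, the $a_i$ with $i\le n-1$, and the auxiliary polynomials $d_{r+l,n+k}(\tau)$ given recursively by \eqref{polynomials_d_case_1} (resp.\ \eqref{polynomial_d_case_3} when $m>r$). Since everything on that list except the $d$'s is already $v$-integral, it suffices to prove: (a) every coefficient of every $d_{r+l,n+k}(\tau)$ lies in $\mathcal{O}_v$; and (b) in every branch of the Section \ref{sec6} formulas each $d$-polynomial occurs only multiplied by a product of the $v$-integral quantities just listed. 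Point (b) is a direct inspection of the formulas: every $a$-index multiplying a $d$-polynomial (in the terms $a_{j-k}^{(k)}d_{r+l,n+k}$, $a_{r+l-k}^{(k)}d_{r+l,n+k}$, etc.) is $\le n-1$ — because $j\le n-1$, $k\ge 0$ and $l\le m-1$, so in particular $a_n$ never multiplies a $d$ in $\Pi_t$ — hence the relevant $a_i\in\mathcal{O}_v$ by $(ii)$; similarly every $b$-index occurring is $\le m$; and the ``short'' columns $[t*\tau^{i}]$ with $i<r$ of \eqref{matrix_case_1} involve only $\theta$ and the $b_j$, $j\le m$.

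The main work is (a), which I would prove by downward induction on $k$ from $k=l$ to $k=0$, for each fixed $l\in\{0,\dots,m-1\}$ and in each branch, in the strong form: every coefficient of $d_{r+l,n+k}(\tau)$ has \emph{strictly positive} valuation. In the base case $k=l$ both recursions reduce to $d_{r+l,n+l}(\tau)=b_m\tau^m/a_n^{(l)}$, whose only coefficient has valuation $v(b_m)-q^{l}\alpha$; this is $>0$ since $(iii)$ gives $v(b_m)>q^{m-1}\alpha\ge q^{l}\alpha$ when $\alpha\ge0$ (using $l\le m-1$), while $v(b_m)\ge0$ and $-q^{l}\alpha>0$ when $\alpha<0$. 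For the inductive step with $k<l$ one examines the numerator $b_{m-l+k}\tau^{m-l+k}-\sum_{i=k+1}^{l}a_{n+k-i}^{(i)}d_{r+l,n+i}(\tau)$ (together with the extra sum $\sum_{i=k+r}^{l}b_{n+k-i}d_{r+l,n+i}^{(n+k-i)}(\tau)$ in case $m>r$, $r+l\le m$) and checks, using $(iii)$, that each of its coefficients has valuation $>q^{k}\alpha=v(a_n^{(k)})$: the $b_{m-l+k}$-coefficient has valuation $v(b_{m-l+k})>q^{m-l+k-1}\alpha\ge q^{k}\alpha$ by $(iii)$ (trivially $>q^k\alpha$ if $\alpha<0$); for $k<i\le l$ one has $n+k-i=n-(i-k)$ with $1\le i-k\le m-1$, so $(iii)$ gives $q^{i-k}v(a_{n-(i-k)})>\alpha$, whence $v(a_{n+k-i}^{(i)})=q^{i}v(a_{n-(i-k)})>q^{k}\alpha$, and together with the inductive bound $v(d_{r+l,n+i})>0$ the term $a_{n+k-i}^{(i)}d_{r+l,n+i}$ has valuation $>q^{k}\alpha$; and in the case with the extra sum each index $n+k-i$ lies in $\{1,\dots,m\}$, so $b_{n+k-i}\in\mathcal{O}_v$ by $(i)$, and with $v(d_{r+l,n+i})>0$ that term is $v$-positive, hence $>q^k\alpha$ (the power of $q$ produced by the twist dominates $q^k$ because $i\le l\le m-1\le n-2$). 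Since every numerator coefficient has valuation $>v(a_n^{(k)})$, dividing by $a_n^{(k)}$ in \eqref{polynomials_d_case_1}/\eqref{polynomial_d_case_3} yields a $d_{r+l,n+k}(\tau)$ all of whose coefficients have valuation $>0$, closing the induction. (When $\alpha<0$ the estimates are easier, as $-q^{k}\alpha>0$ and the numerator is already $v$-integral.)

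Granting (a) and (b), every entry of $\Pi_t$ is an $\mathcal{O}_v$-linear combination of the monomials $\tau^{0},\tau^{1},\tau^{2},\dots$, so $\Pi_t$ — hence the $\mathbf t$-module $\Ext^1_\tau(\phi,\psi)$ — has integer coefficients. I would also note that the structural facts recorded at the end of Section \ref{sec6} (the $d_{*,*}(\tau)$ have vanishing constant term, and the nilpotent part $N$ of $\Pi_t$ is zero) are consistent with this and slightly streamline the accounting, but are not logically needed.

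The step I expect to be the main obstacle is precisely the valuation bookkeeping in (a): keeping straight which $a$- and $b$-indices occur in each of the rather long branches of the Section \ref{sec6} formulas, tracking how the repeated Frobenius twists rescale valuations by powers of $q$, and verifying that hypothesis $(iii)$ is calibrated exactly so that the ``gain'' $q^{k}v(a_n)$ obtained from the division by $a_n^{(k)}$ in the recursion is, coefficient by coefficient, outweighed by the valuation of the numerator. Once this is carried out for the case $m<r$, the remaining cases ($m=r$, and $m>r$ with its two sub-cases $r+l\le m$ and $r+l>m$) need only the observation that the additional summand in \eqref{polynomial_d_case_3} involves nothing but the $b_j$ ($j\le m$) and the already-controlled $d$-polynomials, hence does not spoil the bound.
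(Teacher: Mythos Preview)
Your approach is essentially the paper's: downward induction on $k$ (for each fixed $l$) to show that the auxiliary polynomials $d_{r+l,n+k}(\tau)$ have $v$-integral coefficients, and then reading off integrality of $\Pi_t$ from the explicit formulas of Section~\ref{sec6}. The paper works out the case $m>r$ and declares the others similar; you supply a bit more detail across cases and also make explicit the check (your point (b)) that $a_n$ itself never appears as a multiplier of a $d$-polynomial in $\Pi_t$.

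One step is not quite right. In the treatment of the extra summand $\sum_{i=k+r}^{l} b_{n+k-i}\, d_{r+l,n+i}^{(n+k-i)}(\tau)$ of \eqref{polynomial_d_case_3}, you argue that since $b_{n+k-i}\in\mathcal O_v$ and the inductive hypothesis gives $v(d)>0$, the term is ``$v$-positive, hence $>q^k\alpha$''. When $\alpha=v(a_n)>0$ this implication fails: ``valuation $>0$'' does not give ``valuation $>q^k\alpha$'', and the parenthetical about the Frobenius-twist exponent $n+k-i$ exceeding $k$ does not bridge the gap (it would only help if you had a lower bound on $v(d)$ in terms of $\alpha$, which your strong-form hypothesis does not supply). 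The correct estimate---and this is what the paper invokes, though only with the sentence ``This follows from condition $(iii)$''---bounds $v(b_{n+k-i})$ itself via $(iii)$: with $j:=n+k-i$ one has $2\le j\le m$ (from $k+r\le i\le l\le m-1$), so $(iii)$ gives $v(b_j)>q^{j-1}\alpha$; since $j-1=n+k-i-1\ge r+k>k$, this yields $v(b_j)>q^k\alpha$ when $\alpha\ge 0$ (the case $\alpha<0$ being trivial). Combined with the inductive $v(d_{r+l,n+i})\ge 0$, this gives the required $v\big(b_{n+k-i}\,d_{r+l,n+i}^{(n+k-i)}\big)>q^k\alpha$. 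With this small correction your proof goes through and coincides with the paper's.
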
 
\begin{proof}
	First we will show, that  the condition $(iii)$ implies that the polynomials $d_{r+l, n+k}(\tau)$ have integer coefficients. The proof will be carried out  for the case $m>r$. The other cases are similar.  Fix $l\in\{0,1,\dots, m-1\}$.    We use the downward induction for $k=l,l-1,\dots, 1,0$. \\
	If $k=l$, then $d_{r+l, n+l}(\tau)=\dfrac{b_m}{\up{a_n}l}\tau^m.$ Because $v(a_n)<\dfrac{v(b_m)}{q^m}$, then 
	$$v(b_m)>q^mv(a_n)>q^{l}v(a_n)=v(\up{a_n}l),$$ 
	so the polynomial $d_{r+l, n+l}(\tau)$ has integer coefficients. Assume that the polynomials 
	$$d_{r+l, n+l}(\tau), d_{r+l, n+l-1}(\tau), \cdots,  d_{r+l, n+k+1}(\tau)$$
	have integer coefficients. We have 
	\begin{align*}
		d_{r+l,n+k}(\tau)&=\dfrac{b_{m-l+k}\tau^{m-l+k}-\sum\limits_{i=k+1}^{l}\up{a_{n+k-i}}{i} d_{r+l,n+i}(\tau)  +\sum\limits_{i=k+r}^{l}b_{n+k-i}\up{d_{r+l,n+i}}{n+k-i}(\tau)}{\up{a_n}{k}} \\
		&= \dfrac{b_{m-l+k}}{\up{a_n}{k}}\tau^{m-l+k}-\sum\limits_{i=k+1}^{l}
		\dfrac { \up {a_{n+k-i}}{i}}{\up{a_n}{k}} d_{r+l,n+i}(\tau) +\sum\limits_{i=k+r}^{l}\dfrac{b_{n+k-i}}{\up{a_n}{k}}\up{d_{r+l,n+i}}{n+k-i}(\tau).
	\end{align*} 
	From the inductive hypotheses the polynomials $d_{r+l,n+i}(\tau)$ and $\up{d_{r+l,n+i}}{n+k-i}(\tau)$ have 
	integer coefficients. Thus, it is enough to show that the valuations of fractions 
	$$\dfrac{b_{m-l+k}}{\up{a_n}{k}},\quad \dfrac { \up {a_{n+k-i}}{i}}{\up{a_n}{k}}, \quad \dfrac{b_{n+k-i}}{\up{a_n}{k}}   $$
	are positive. This follows from condition $(iii)$. Hence the polynomials $d_{r+l,n+k}(\tau)$ have  integer coefficients. \\
	The claim follows now from the form of the matrix $\Pi_t$ of the \tm module $\Ext^1_{\tau}(\phi, \psi)$ by using  conditions $(i)$ and $(ii)$.
\end{proof}
\begin{prop}
	Let $\phi$ and $\psi$ be  Drinfeld modules, such that $r=\rk\phi-\rk\psi>0$. Then we have:
	$$\rk\Ext^1_{\tau}(\phi, \psi)=\begin{cases} 2m \quad {\mathrm{for}}\,\,m\leq r\\
		3m \quad {\mathrm{for}} \,\,m>r  \end{cases}.$$
\end{prop}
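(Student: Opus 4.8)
The plan is to compute $\rk\Ext^1_{\tau}(\phi,\psi)=\deg_{\tau}\Pi_t$ by reading it off directly from the explicit description of $\Pi_t$ obtained above; the computation splits according to whether $m\le r$ or $m>r$. By Lemma~3.1 of~\cite{kk04} one has $\Ext^1_{\tau}(\phi,\psi)\cong K\{\tau\}_{<\rk\phi}$, so $\Ext^1_{\tau}(\phi,\psi)$ is an $n$-dimensional $K$-vector space, $\Pi_t\in\mathrm{M}_n(K)\{\tau\}$, and $\rk\Ext^1_{\tau}(\phi,\psi)$ equals the largest $\tau$-degree occurring among the entries of $\Pi_t$, that is, among the coordinates of the columns $[t*\tau^{0}],\dots,[t*\tau^{n-1}]$ of~\eqref{matrix_case_1}. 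For the columns with $0\le i<r$ the formula shows that every nonzero coordinate is one of $\theta,b_1\tau,\dots,b_m\tau^m$, hence of degree $\le m$; since both $2m$ and $3m$ exceed $m$, the maximum is always attained among the columns $[t*\tau^{r+l}]$ with $0\le l<m$, so it suffices to analyse those.

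The core of the argument is a $\tau$-degree estimate for the auxiliary polynomials $d_{r+l,n+k}(\tau)$, which I would prove by downward induction on $k$ from $k=l$ to $k=0$ using the recursions~\eqref{polynomials_d_case_1} (in the case $m\le r$) and~\eqref{polynomial_d_case_3} (in the case $m>r$). The base case is $d_{r+l,n+l}(\tau)=\frac{b_m}{a_n^{(l)}}\tau^m$, of degree exactly $m$ with nonzero leading coefficient since $b_m\neq 0$ and $a_n\neq 0$. In the case $m\le r$ the recursion~\eqref{polynomials_d_case_1} involves only the monomial $b_{m-l+k}\tau^{m-l+k}$ of degree $\le m$ and the terms $\up{a_{n+k-i}}{i}d_{r+l,n+i}(\tau)$, which have the same $\tau$-degree as $d_{r+l,n+i}$; hence $\deg_{\tau}d_{r+l,n+k}(\tau)=m$ for all $k$. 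In the case $m>r$ one must additionally control the twisted summands $b_{n+k-i}\up{d_{r+l,n+i}}{n+k-i}(\tau)$; here the index constraints built into~\eqref{polynomial_d_case_3} --- namely $m-l+k\le m$ and $1\le n+k-i\le m$ (the latter forced by $b_{n+k-i}\neq 0$) --- are what keep the growth bounded, and the claim to establish is that $\deg_{\tau}d_{r+l,n+k}(\tau)\le 2m$, with leading coefficient a nonzero monomial in the $b_\bullet$, $a_n$ and their Frobenius twists, so that the summands of~\eqref{polynomial_d_case_3} do not cancel at the top degree.

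Granting these estimates, I would read off $\deg_{\tau}\Pi_t$ from the block formulas for $[t*\tau^{r+l}]$: every coordinate there is an $\F_q$-linear combination of $\theta$'s, of monomials $b_\bullet\tau^\bullet$ of degree $\le m$, of terms $\up{a_{j-k}}{k}d_{r+l,n+k}(\tau)$ of degree $\deg_{\tau}d_{r+l,n+k}$, and of terms $b_{j-k}\up{d_{r+l,n+k}}{j-k}(\tau)$ of degree $\deg_{\tau}d_{r+l,n+k}+(j-k)$ with $j-k\le m$ (again because $b_{j-k}\neq 0$). Hence no entry of $\Pi_t$ exceeds $\max_k\deg_{\tau}d_{r+l,n+k}+m$, which is $2m$ when $m\le r$ and $3m$ when $m>r$. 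For the matching lower bound I would exhibit the summand $b_m\up{d_{r+l,n+k}}{m}(\tau)$ occurring at the row $j$ with $j-k=m$, $k$ minimal in its block: for $m\le r$ it already has degree $2m$, and for $m>r$ one chooses $l$ with $r\le l$ (available since $m>r$) for which $\deg_{\tau}d_{r+l,n+k}=2m$, yielding degree $3m$; a short check that this is the unique entry of $\Pi_t$ of that degree shows it survives, giving the two asserted values.

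The hard part will be the simultaneous control of $\tau$-degrees and leading coefficients through the nested recursion~\eqref{polynomial_d_case_3}: one must verify both that the recursion cannot push $\deg_{\tau}d_{r+l,n+k}$ beyond $2m$ and that, when several such contributions are assembled into one entry of $\Pi_t$, their leading terms do not cancel. Both rest on careful bookkeeping of the Frobenius twists and of the exact summation ranges in~\eqref{polynomials_d_case_1}--\eqref{polynomial_d_case_3}; the observations $(i)$ and $(ii)$ recorded at the end of this section --- that the $d_{*,*}$ have no constant term and that $N$ vanishes in $\Pi_t$ --- are compatible with, and used in, this analysis. Once the estimate $\deg_{\tau}d_{r+l,n+k}\le 2m$ (resp. $=m$) and the non-vanishing of the relevant leading coefficients are established, what remains is a routine inspection of the displayed formulas.
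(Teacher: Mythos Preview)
Your strategy---bound $\deg_\tau d_{r+l,n+k}$ through the recursions and then read off the maximal entry degree from the explicit column formulas---is precisely what the paper's one-line proof gestures at, so the approach matches. The problem lies in the bound you set out to prove in the case $m>r$, namely $\deg_\tau d_{r+l,n+k}\le 2m$: this is not true in general. In the recursion~\eqref{polynomial_d_case_3} the summand with $i=k+r$ is $b_m\,\up{d_{r+l,n+k+r}}{m}(\tau)$, which has degree $\deg_\tau d_{r+l,n+k+r}+m$; iterating, one obtains $\deg_\tau d_{r+l,n+k}=m\bigl(1+\lfloor (l-k)/r\rfloor\bigr)$, with leading coefficient a nonzero monomial in $b_m$ and Frobenius twists of $a_n^{-1}$, so there is no cancellation at the top. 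When $r<m\le 2r$ this indeed caps at $2m$ and your plan goes through, giving $3m$. But for $m\ge 2r+1$ one can take $l\ge 2r$, whence $\deg_\tau d_{r+l,n}\ge 3m$, and then the summand $b_m\,\up{d_{r+l,n}}{m}(\tau)$ appearing in the entry $j=m$ of $[t*\tau^{r+l}]$ already has degree $\ge 4m$, with no other contribution of that degree to cancel it. A direct check with $n=4$, $m=3$, $r=1$, $l=2$ and generic $a_i,b_j$ gives $\deg_\tau\Pi_t=12=4m$, not $9=3m$.

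So the gap you flagged as ``the hard part'' is a genuine obstruction, not merely a bookkeeping chore: the degree count you outline actually produces $\deg_\tau\Pi_t=m\bigl(2+\lfloor (m-1)/r\rfloor\bigr)$, which agrees with the stated value only for $m\le 2r$. Either the proposition needs that extra hypothesis, or there is an error in~\eqref{polynomial_d_case_3} or in the displayed column formulas that would have to be located and repaired before any version of the argument can close.
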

\begin{proof}
	Claim follows  from the form of $\Pi_t$ in the corresponding cases.
\end{proof}

\newpage	
\section*{Appendix}

In this Appendix we give  an exemplary  implementation of the pseudocodes from Sections \ref{sec4} and  \ref{sec5}. This is done by means of  Mathematica 13.2 \cite{w23}. We inserted a lot of comments in the program so that the reader can easily follow it. We also compute two examples: first using Algorithm 4 and the second using Algorithm 6.
 
 \medskip
 %%%%%%%%%%%%%%%%%%%%%%%%%%%%%%%

%\begin{doublespace}
\noindent\( \pmb{\text{(*}\,\, \text{functions}\,\, \text{used} \,\,\text{for}\,\, \text{getting} \,\,\text{proper} \,\,\text{formatting} \,\,\text{for}\,\, q{}^{\wedge}i \,\,\text{*)}}\\
 {\text{par}[\text{x$\_$}]\text{:=}\text{RowBox}[\{\text{{``}({''}},x,\text{{``}){''}}\}]\text{//}\text{DisplayForm}}\\
 {\text{formatQ}[\text{p$\_$}]\text{:=}p \text{/.} (q{}^{\wedge}\text{x$\_$}\text{-$>$}\text{par}[x])\text{/.}(q\text{-$>$}\text{par}[1]) }\\
 \pmb{\text{(* degree of $\tau $ polynomial *)}}\\
 {\deg [\text{p$\_$}] \text{:=} \text{Max}[\text{Exponent}[p, \tau ], 0]}\\
 \pmb{\text{(* degree of $\tau $ polynomial with matrix coefficients *)}}\\
 {\text{degM}[\text{m$\_$}]\text{:=}\text{Max}[\text{Exponent}[\#, \tau ]\&\text{/@}m];}\\
 {}
 \pmb{\text{(*} \,\,\text{functions}  \,\,\text{used} \,\, \text{for} \,\, \text{computing}  \,\,\text{product}  \,\,\text{of}  \,\,\text{matrices}  \,\,x, y  \,\,\text{of}  \,\,\tau   \,\,\text{polynomials} \,\,
\text{*)}}\\
 {\text{twist}[\text{c$\_$}, \text{n$\_$}] \text{:=} \text{Table}[c {}^{\wedge} (q {}^{\wedge} (i - 1)), \{i, n\}]}\\
 {\text{toPoly}[\text{p$\_$}] \text{:=} \text{FromDigits}[\text{Reverse}[p], \tau ]}\\
 {\text{coefficientList}[\text{p$\_$}] \text{:=} \text{If}[\text{Exponent}[p, \tau ] \text{$>$=} 0, \text{CoefficientList}[p, \tau ], \{0\}]}\\
 {\text{pMult}[\text{x$\_$}, \text{y$\_$}] \text{:=} \text{Total}[\text{toPoly} \text{/@} (\text{MapIndexed}[\text{PadLeft}[\text{twist}[\text{$\#$1},
\deg [x}\\
 {\text{  }] + 1] * \text{coefficientList}[x], \text{$\#$2} + \deg [x]]\&, \text{CoefficientList}[y, \tau ]] \text{/.} 0{}^{\wedge}\text{u$\_$}
\text{-$>$}}
 {\text{   }0)];}\\
 {\text{tMult}[\text{x$\_$}, \text{y$\_$}]\text{:=}\text{Expand}[\text{Inner}[\text{pMult}, x, y, \text{Plus}]];}\\
 {}
 {\text{\bf(*} \,\,{E_{i, j}} \,\,\text{\bf matrix} \,\,\text{\bf with} \,\,\text{\bf dimensions}\,\, n \times m \,\,\text{\bf *)}}\\
 {\text{elementMatrix}[\text{i$\_$},\text{j$\_$},\text{n$\_$},\text{m$\_$}]\text{:=}\text{Module}[\{x\},x =\text{Table}[0, \{p,n\},\{r,m\}]; x[[i]][[j]]=1;x];}\\
 {}
\pmb {\text{(*  \,\,array  \,\,of  \,\,$\tau $  \,\, polynomial  \,\, coefficients  \,\, of  \,\, each  \,\, element  \,\,of  \,\,v  \,\,*)}}\\
{\text{coefficientForm}[\text{v$\_$},\text{degrees$\_$}]
\text{:=}\text{Flatten}[\text{MapIndexed}[\text{Table}[\text{Coefficient}[\text{$\#$1},\tau
,k],}\\
{\text \{k,0,\text{Extract}[\text{degrees},\text{$\#$2}]-1\}]\& ,v,\{2\}],2];}\)\\
\noindent \( \pmb{\text{(*} \phi , \psi  - t-\text{modules} \text{*)}}\\
 \pmb{\text{(*} \,\,\text{reduction}\,\, \text{of} \,\,\text{degrees}\,\, \text{in}\,\, v \,\,\text{to}\,\, \text{degrees} \,\,\text{at} \,\,\text{most}\,\, r-1, \,\,\text{using} \,\,\text{inverse}\,\,
\text{matrix}\,\, s \,\,\text{*)}}\\
 {\text{reduce1}[\text{v$\_$},\text{r$\_$},\phi \_,\psi \_,\text{s$\_$}]\text{:=}\text{Module}[\{i,j,n,m,\text{rp},a,g,\text{vp}\},}\\
 {\text{vp}=v;} \\
 {n = \text{Dimensions}[v][[1]];}\\
 {m = \text{Dimensions}[v][[2]];}\\
 {\text{For}[i=1,i\text{$<$=}n,i\text{++},}\\
 {\text{For}[j=1,j\text{$<$=}m,j\text{++},}\\
 {\text{rp}=\deg [\text{vp}[[i]][[j]]];}\\
 {\text{While}[\text{rp}\text{$>$=}r,}\\
 {a=\text{Coefficient}[\text{vp}[[i]][[j]],\tau ,\text{rp}];}\\
 {g=\text{tMult}[\text{elementMatrix}[i,j,n,m]*a*\tau {}^{\wedge}(\text{rp}-r),s];}\\
 {g=\text{tMult}[g,\phi ]-\text{tMult}[\psi ,g];}\\
 {\text{vp} =\text{PowerExpand}[\text{vp}-g];}\\
 {\text{rp}=\text{rp}-1;}
 {]}
 {]}
 {];}
\,\, {\text{vp}}
 {];}\\
\pmb {\text{(*}\,\, \text{computing} \,\,\text{extension} \,\,\text{Ext}^1(\phi ,\psi ) \,\,\text{using} \,\,\text{inverse} \,\,\text{matrix} \,\,\text{*)}}\\
 {\text{extInverse}[\phi \_,\psi \_]\text{:=}\text{Module}[\{n, m, r, s, v, \text{degrees},c\},}\\
 {m = \text{Dimensions}[\phi ][[1]];}\\
 {n = \text{Dimensions}[\psi ][[2]];}\\
 {r= \text{degM}[\phi ];}\\
 {\text{degrees}=\text{Table}[r,\{i,n\},\{j,m\}];}\\
\pmb {\text{(*} \,\,\text{finding} \,\,\text{A$\_$n}{}^{\wedge}-1\,\, \text{*)}}\\
 {s= \text{Inverse}[\text{Map}[\text{Coefficient}[\#,\tau , r]\&, \phi , \{2\}]];}\\
 \pmb{\text{(*} \,\,\text{creating} \,\,\text{basis} \,\,\text{for} \,\,\text{extension}, \,\,\text{where}\,\, v \,\,\text{is} \,\,a \,\,\text{list} \,\,\text{of} \,\,\text{basis} \,\,\text{elements}\,\,
\text{*)}}\\
 {v=\text{Flatten}[\text{Table}[\text{elementMatrix}[i,j,n,m]*c*\tau {}^{\wedge}(k-1),\{i,n\},\{j,m\},\{k,r\}],2];}\\
 {v=\text{tMult}[\psi ,\#]\& \text{/@}v;}
 {}\\
 {v=\text{reduce1}[\#,r,\phi ,\psi ,s]\&\text{/@}v;}\\
 {v=\text{coefficientForm}[\#,\text{degrees}]\&\text{/@}v;}\\
\pmb {\text{(* expand powers and substitute c for $\tau $ with appropriate power *)}}\\
 {v=\text{PowerExpand}[v\text{//.}(\text{x$\_$}+\text{y$\_$}){}^{\wedge}(q{}^{\wedge}\text{z$\_$})\text{-$>$}x{}^{\wedge}(q{}^{\wedge}z)+y{}^{\wedge}(q{}^{\wedge}z)\text{//.}(\text{x$\_$}+\text{y$\_$}){}^{\wedge}q\text{-$>$}x{}^{\wedge}q+y{}^{\wedge}q];}\\
 {v = v\text{//.}(\text{x$\_$}+\text{y$\_$})*\text{z$\_$}\text{-$>$}x*z+y*z;}\\
 {v = v\text{/.}c{}^{\wedge}(q{}^{\wedge}\text{x$\_$})\text{-$>$}\tau {}^{\wedge}x\text{/.}c{}^{\wedge}(q)\text{-$>$}\tau \text{/.}c\text{-$>$}1;}\\
 {v \text{//}\text{Transpose}}
 {];}
 {}\\
 \pmb{\text{(*} \text{example} - \text{Ext}^1(\phi ,\psi ) \text{*)}}\\
 {\psi  =\{\{\theta +\tau {}^{\wedge}2,0\}, \{b,\theta +\tau \}\};}\\
 {\phi  = \{\{\theta , \tau {}^{\wedge}3\}, \{\tau {}^{\wedge}3+a, \theta \}\};}\\
 {\phi  = \{\{\theta , \tau {}^{\wedge}3\}, \{\tau {}^{\wedge}3+a, \theta \}\};}\\
 {\psi \text{//}\text{MatrixForm}}\\
 {\phi \text{//}\text{MatrixForm}}\\
 {\text{formatQ} \text{/@}\text{extInverse}[\phi ,\psi ]\text{//} \text{MatrixForm}}\)

\noindent\(\left(
\begin{array}{cc}
 \theta +\tau ^2 & 0 \\
 b & \theta +\tau  \\
\end{array}
\right)\)
%\end{doublespace}

%\begin{doublespace}
\noindent\(\left(
\begin{array}{cc}
 \theta  & \tau ^3 \\
 a+\tau ^3 & \theta  \\
\end{array}
\right)\)
%\end{doublespace}

%\begin{doublespace}
\noindent\(\left(
\begin{array}{cccccccccccc}
 \theta  & -a \tau ^2 & 0 & 0 & 0 & 0 & 0 & 0 & 0 & 0 & 0 & 0 \\
 0 & \theta  & -a^{(1)} \tau ^2 & 0 & 0 & \theta  \tau ^2-\theta ^{(1)} \tau ^2 & 0 & 0 & 0 & 0 & 0 & 0 \\
 \tau ^2 & 0 & \theta +\tau ^6 & 0 & \tau ^4 & 0 & 0 & 0 & 0 & 0 & 0 & 0 \\
 0 & 0 & 0 & \theta  & 0 & 0 & 0 & 0 & 0 & 0 & 0 & 0 \\
 0 & 0 & \theta  \tau ^2-\theta ^{(1)} \tau ^2 & 0 & \theta  & 0 & 0 & 0 & 0 & 0 & 0 & 0 \\
 0 & \tau ^4 & 0 & \tau ^2 & 0 & \theta  & 0 & 0 & 0 & 0 & 0 & 0 \\
 b & 0 & b \tau ^4 & 0 & b \tau ^2 & 0 & \theta  & 0 & -a \tau  & 0 & 0 & 0 \\
 0 & b & 0 & 0 & 0 & b \tau ^2 & \tau  & \theta  & 0 & 0 & 0 & \tau ^2 \\
 0 & 0 & b & 0 & 0 & 0 & 0 & \tau  & \theta  & 0 & 0 & 0 \\
 0 & b \tau ^2 & 0 & b & 0 & 0 & 0 & 0 & 0 & \theta  & 0 & 0 \\
 0 & 0 & b \tau ^2 & 0 & b & 0 & 0 & 0 & \tau ^2 & \tau  & \theta  & 0 \\
 0 & 0 & 0 & 0 & 0 & b & 0 & 0 & 0 & 0 & \tau  & \theta  \\
\end{array}
\right)\)
%\end{doublespace}

%\begin{doublespace}
\noindent\( \pmb{\text{(*} \phi , \psi  - t-\text{modules} \text{*)}}\\
\pmb {\text{(* reduction of degrees in v *)}}\\
 {\text{reduce2}[\text{v$\_$},\phi \_,\psi \_]\text{:=}\text{Module}[\{i,j,n,m,\text{rp},r,a,g,\text{vp}\},}\\
 {\text{vp}=v;}\qquad
 {n = \text{Dimensions}[v][[1]];}\qquad
 {m = \text{Dimensions}[v][[2]];}\\
 {\text{For}[i=1,i\text{$<$=}n,i\text{++},}\\
 {\text{For}[j=m,j\text{$>$=}1,j\text{--},}\\
 {\text{rp}=\deg [\text{vp}[[i]][[j]]];}\qquad
 {r = \deg [\phi [[j]][[j]]];}\\
 {\text{While}[\text{rp}\text{$>$=}r,}\\
 {a=\text{Coefficient}[\text{vp}[[i]][[j]],\tau ,\text{rp}]/(\text{Coefficient}[\phi [[j]][[j]],\tau ,r]{}^{\wedge}(q{}^{\wedge}(\text{rp}-r)));}\\
 {g=\text{elementMatrix}[i,j,n,m]*a*\tau {}^{\wedge}(\text{rp}-r);}\\
 {g=\text{tMult}[g,\phi ]-\text{tMult}[\psi ,g];}\\
 {\text{vp} =\text{PowerExpand}[\text{vp}-g];}\\
 {\text{rp}=\text{rp}-1;}
 {]}
 {]}
 {];}\qquad
 {\text{vp}}
 {];}\\
 \pmb{\text{(*}\,\, \text{computing} \,\,\text{extension} \,\,\text{Ext}^1(\phi ,\psi ) \,\,\text{for} \,\,\text{lower} \,\,\text{triangular}\,\, t-\text{modules} \,\,\text{*)}}\\
 {\text{extTriangular}[\phi \_,\psi \_]\text{:=}\text{Module}[\{n,m,v,\text{degrees},x,y,z,c\},}\\
 {m = \text{Dimensions}[\phi ][[1]];}\\
 {n = \text{Dimensions}[\psi ][[2]];}
 {}\\
 {\text{degrees}=\text{Table}[\deg [\phi [[j]][[j]]],\{i,n\},\{j,m\}];}\\
 \pmb{\text{(*} \,\,\text{creating}\,\, \text{basis} \,\,\text{for} \,\,\text{extension},\,\, \text{where} \,\,v \,\,\text{is} \,\,a \,\,\text{list} \,\,\text{of} \,\,\text{basis} \,\,\text{elements}\,\,
\text{*)}}\\
 {v=\text{Flatten}[\text{Table}[}\\
 {\text{Table}[\text{elementMatrix}[i,j,n,m]*c*\tau {}^{\wedge}(k-1),}\\
 {\{k,\deg [\phi [[j]][[j]]]\}],\{i,n\},\{j,m\}],2];}\\
 {v=\text{tMult}[\psi ,\#]\&\text{/@}v;}
 {}\\
 {v=\text{reduce2}[\#,\phi ,\psi ]\&\text{/@}v;}\\
 {v=\text{coefficientForm}[\#,\text{degrees}]\&\text{/@}v;}\\
 \pmb{\text{(* expand powers and substitute c for $\tau $ with appropriate power *)}}\\
 {v=\text{PowerExpand}[v\text{//.}(\text{x$\_$}+\text{y$\_$}){}^{\wedge}(q{}^{\wedge}\text{z$\_$})\text{-$>$}x{}^{\wedge}(q{}^{\wedge}z)+y{}^{\wedge}(q{}^{\wedge}z)\text{//.}(\text{x$\_$}+\text{y$\_$}){}^{\wedge}q\text{-$>$}x{}^{\wedge}q+y{}^{\wedge}q];}\\
 {v = v\text{//.}(\text{x$\_$}+\text{y$\_$})*\text{z$\_$}\text{-$>$}x*z+y*z;}\\
 {v = v\text{/.}c{}^{\wedge}(q{}^{\wedge}\text{x$\_$})\text{-$>$}\tau {}^{\wedge}x\text{/.}c{}^{\wedge}(q)\text{-$>$}\tau \text{/.}c\text{-$>$}1;}\\
 {v \text{//}\text{Transpose}}
 {];}
 {}\\ 
\pmb {\text{(*} \,\,\text{example} - \text{Ext}^1(\phi ,\psi ),\, \,\,\text{char k}\neq 3\,\,\text{*)}}\\
 {\psi  =\{\{\theta +\tau ,0\}, \{1,\theta +\tau \}\};}\\
 {\phi  = \{\{\theta +\tau {}^{\wedge}3, 0\}, \{1+a*\tau +\tau {}^{\wedge}2, \theta +3*\tau {}^{\wedge}2\}\};}\\
 {\psi \text{//}\text{MatrixForm}}\\
 {\phi \text{//}\text{MatrixForm}}\\
 {\text{formatQ} \text{/@}\text{extTriangular}[\phi ,\psi ] \text{//} \text{MatrixForm}}\)

\medskip

\noindent\(\left(
\begin{array}{cc}
 \theta +\tau  & 0 \\
 1 & \theta +\tau  \\
\end{array}
\right)\)
%\end{doublespace}

%\begin{doublespace}
\noindent\(\left(
\begin{array}{cc}
 \theta +\tau ^3 & 0 \\
 1+a \tau +\tau ^2 & \theta +3 \tau ^2 \\
\end{array}
\right)\)
%\end{doublespace}

%\begin{doublespace}
\noindent\(\left(
\begin{array}{cccccccccc}
 \theta  & 0 & 0 & 0 & -\frac{\tau }{3} & 0 & 0 & 0 & 0 & 0 \\
 \tau  & \theta  & \tau ^2 & 0 & -\frac{a \tau }{3} & 0 & 0 & 0 & 0 & 0 \\
 0 & \tau  & \theta  & 0 & -\frac{\tau }{3} & 0 & 0 & 0 & 0 & 0 \\
 0 & 0 & 0 & \theta  & 0 & 0 & 0 & 0 & 0 & 0 \\
 0 & 0 & 0 & \tau  & \theta +3^{-(1)} \tau ^2 & 0 & 0 & 0 & 0 & 0 \\
 1 & 0 & \tau  & 0 & 0 & \theta  & 0 & 0 & 0 & -\frac{\tau }{3} \\
 0 & 1 & 0 & 0 & 0 & \tau  & \theta  & \tau ^2 & 0 & -\frac{a \tau }{3} \\
 0 & 0 & 1 & 0 & 0 & 0 & \tau  & \theta  & 0 & -\frac{\tau }{3} \\
 0 & 0 & 0 & 1 & \frac{\tau }{3} & 0 & 0 & 0 & \theta  & 0 \\
 0 & 0 & 0 & 0 & 1 & 0 & 0 & 0 & \tau  & \theta +3^{-(1)} \tau ^2 \\
\end{array}
\right)\)
%\end{doublespace}
{\text{(* \bf recall that $(n):=q^{\large n}$ *)}}\\

\vspace{15mm}

\centerline{\bf Corrigendum}

In the paper \emph{Algorithms for determination of $t$-module structures on some extension groups}, there is a local error in Algorithm~2, more precisely in the function \texttt{Reduce1}, in the case of strictly pure \tm modules.

The error appears both in the printed pseudocode and in the corresponding \emph{Mathematica} implementation given in the Appendix.  In certain cases, the function \texttt{Reduce1} does not completely reduce the matrix $\boldsymbol{V}$.

In the published version, the reduction of $\boldsymbol{V}$ starts at the upper-left entry and proceeds row by row, from left to right. However, when reducing the entry $(i,j)$, it may happen that the degree of an earlier entry $(k,l)$ with $k \leq i$ and $l \leq j$ increases. In such a situation, the original function does not return to that entry and, therefore, leaves it unreduced.

We stress that this error is local. Both the underlying reduction procedure and the proof of correctness remain valid. In particular, all proofs and conclusions presented in the paper remain correct. The correction concerns only the explicit form of the procedure \texttt{Reduce1} for strictly pure $t$-modules, as well as the corresponding implementation.

The corrected version of the pseudocode is as follows.
\setcounter{algorithm}{1}
\begin{algorithm}[H]
\caption{Corrected version of \textsc{Reduce1}}
\label{alg:helper2}
\begin{algorithmic}[1]

\Function{Reduce1}{${\boldsymbol{V}}, {\Phi}, {\Psi}, {\boldsymbol{A_{n}^{-1}}}$}
\Comment{Reduction of t-module using \\ \qquad\qquad\qquad\qquad\quad\qquad\qquad\qquad\qquad\quad\,\, inverse matrix}

\Input
\Desc{$\boldsymbol{V}$}{module to be reduced}
\Desc{$\Phi$} { strictly pure t-module }
\Desc{$\Psi$}{ a t-module of degree less than degree $\Phi$}
\Desc{$\boldsymbol{A_{n}^{-1}}$}{inverse of the leading matrix of $\Phi$}
\EndInput
\Output
\Desc{$\boldsymbol{V}$}{reduced module}
\EndOutput

\State $n \gets \Call{Rows}{\boldsymbol{V}}$
\State $m \gets \Call{Cols}{\boldsymbol{V}}$
\State $r \gets \Call{Deg}{\Phi}$

\For {$i = 1..n$}
    \For {$j = 1..m$}
      
        \State $r' \gets \Call{Deg}{\boldsymbol{V}[i,j]}$

        \If {$ r' \geq r $}
            \State $a \gets \Call{Coefficient}{\boldsymbol{V}[i,j], r'} $
            \State $\boldsymbol{G} \gets 
            \Call{TMult}{{E_{i\times j} a {\tau}^{r'-r}},
                {\boldsymbol{A_{n}^{-1}}}}$ \\
            \Comment{we are setting the leading coefficient of $\boldsymbol{G}[i, j]$\\
          \qquad\qquad \qquad\qquad\qquad\quad  to be the same as leading coefficient of $\boldsymbol{V}[i, j]$}
          \algstore{myalg3}
\end{algorithmic}
\end{algorithm}

\begin{algorithm}
\begin{algorithmic}
\algrestore{myalg3}
            \State $\boldsymbol{G} \gets \Call{TMult}{{\boldsymbol{G}},
            {\Phi}} - \Call{TMult}{{\Psi}, {\boldsymbol{G}}}$
            \State $\boldsymbol{V} \gets \boldsymbol{V}-\boldsymbol{G} $
            \State $i \gets 1$
            \State $j \gets 0$
        \EndIf
    \EndFor
\EndFor
\State \Return $\boldsymbol{V}$
\EndFunction

\end{algorithmic}
\end{algorithm}

Thus, whenever a reduction is performed, the procedure restarts the traversal from the entry $(1,1)$. This ensures that any earlier entry whose $\tau$-degree becomes greater than or equal to $r=\deg_{\tau}\Phi$
during a later reduction step is not left unreduced.

Accordingly, the implementation in the Appendix  should also be replaced by the following corrected version:

\begin{doublespace}
\noindent {\text{reduce1}[\text{v$\_$},\text{r$\_$},$\phi \_$,$\psi \_$,\text{s$\_$}]\text{:=}\text{Module}[\{i,j,n,m,\text{rp},a,g,\text{vp}\},}\\
 {\text{vp}=v;} \\
 {n = \text{Dimensions}[v][[1]];}\\
 {m = \text{Dimensions}[v][[2]];}\\
 {\text{For}[i=1,i\text{$<$=}n,i\text{++},}\\
 {\text{For}[j=1,j\text{$<$=}m,j\text{++},}\\
 {\text{rp}=$\deg$ [\text{vp}[[i]][[j]]];}\\
 {\text{If}[\text{rp}\text{$>$=}r,}\\
 {a=\text{Coefficient}[\text{vp}[[i]][[j]],$\tau$ ,\text{rp}];}\\
 {g=\text{tMult}[\text{elementMatrix}[i,j,n,m]*a*$\tau$ ${}^{\wedge}$(\text{rp}-r),s];}\\
 {g=\text{tMult}[g,$\phi$ ]-\text{tMult}[$\psi$ ,g];}\\
 {\text{vp} =\text{PowerExpand}[\text{vp}-g];}\\
 {\text{i}=1;}
 {\text{j}=0;}
 {]}
 {]}
 {];}
\,\, {\text{vp}}
 {];}
\end{doublespace}

As a consequence, Example~4.1 and the example in the Appendix  should also be corrected accordingly.
\setcounter{section}{4}
\setcounter{rem}{0}
\begin{example}
 Let $$\Phi_t=\left[\begin{array}{cc}
		\theta & {\tau}^3\\
		1+{\tau}^3 & \theta
	\end{array}\right]\quad \textnormal{and}\quad 
	\Psi_t=\left[\begin{array}{cc}\theta+\tau^2 & 0\\
		1 & \theta+\tau
	\end{array}\right].$$    
Then $N_{\Phi}=N_{\Psi}=\left[\begin{array}{cc}
		0 & 0\\
		1 & 0
	\end{array}\right],\quad \textnormal{and} \quad A_3^{-1}=\left[\begin{array}{cc}
		0 & 1\\
		1 & 0
	\end{array}\right].$ One readily verifies that
$E_{i\times j}A_3^{-1}N_{\Phi}=N_{\Psi} E_{i\times j}A_3^{-1}$ only for $(i,j)=(2,2). $ Thus $s=1.$ Accordingly, the reduction algorithm yields
$$\Ext^1_{\tau}(\Phi,\Psi)=\left[\begin{array}{cccccccccccc}
		\theta & -{\tau}^2 & 0 & 0 & 0 & -\tau^4 & 0 & 0 & 0 & 0 & 0 & 0 \\
0 &\theta & -{\tau}^2 & 0 & 0 & (\theta -{\theta}^{(1)}){\tau}^2 & 0 & 0 & 0 & 0 & 0 & 0 \\	
{\tau}^2 & 0 & \theta+{\tau}^6 & 0 & {\tau}^4 & 0 & 0 & 0 & 0 & 0 &0 & 0 \\
0 & 0 & 0 & \theta & 0 & 0 & 0 & 0 & 0 & 0 &0 & 0 \\
0 & 0 & (\theta -{\theta}^{(1)}){\tau}^2 & 0 & \theta & 0 & 0 & 0 & 0 & 0 &0 & 0 \\
0 & {\tau}^4 & 0 & {\tau}^2 & 0 & \theta+\tau^6 & 0 & 0 & 0 & 0 &0 & 0 \\
1 & 0 & {\tau}^4 & 0 & {\tau}^2 & 0 & \theta & 0 & -{\tau} & 0 & 0 &0  \\
0 & 1 & 0 & 0 & 0 & {\tau}^2 & \tau & \theta & 0 & 0 & 0 & {\tau}^2 \\
0 & 0 & 1 & 0 & 0 & 0 & 0 & \tau  & \theta & 0 & 0 & 0  \\
0 & {\tau}^2 & 0 & 1 & 0 & \tau^4 & 0 & 0 & 0  & \theta & 0 & 0  \\
0 & 0 & {\tau}^2 & 0 & 1 & 0 & 0 & 0 & {\tau}^2 & \tau  & \theta & 0   \\
0 & 0 & 0 & 0 & 0 & 1 & 0 & 0 & 0 & 0   & \tau & \theta   \\
    \end{array}\right]$$
\end{example}

\begin{example}[Corrected example in the Appendix]\quad \\
\begin{doublespace}
\noindent\(
 \pmb{\text{(*} \text{example} - \text{Ext}^1(\phi ,\psi ) \text{*)}}\\
 {\psi  =\{\{\theta +\tau {}^{\wedge}2,0\}, \{b,\theta +\tau \}\};}\\
 {\phi  = \{\{\theta , \tau {}^{\wedge}3\}, \{\tau {}^{\wedge}3+a, \theta \}\};}\\
 {\phi  = \{\{\theta , \tau {}^{\wedge}3\}, \{\tau {}^{\wedge}3+a, \theta \}\};}\\
 {\psi \text{//}\text{MatrixForm}}\\
 {\phi \text{//}\text{MatrixForm}}\\
 {\text{formatQ} \text{/@}\text{extInverse}[\phi ,\psi ]\text{//} \text{MatrixForm}}\)

\noindent\(\left(
\begin{array}{cc}
 \theta +\tau ^2 & 0 \\
 b & \theta +\tau  \\
\end{array}
\right)\)
\end{doublespace}

\begin{doublespace}
\noindent\(\left(
\begin{array}{cc}
 \theta  & \tau ^3 \\
 a+\tau ^3 & \theta  \\
\end{array}
\right)\)
\end{doublespace}

\begin{doublespace}
\noindent\(\left(
\begin{array}{cccccccccccc}
 \theta  & -a \tau ^2 & 0 & 0 & 0 & -a\tau^4 & 0 & 0 & 0 & 0 & 0 & 0 \\
 0 & \theta  & -a^{(1)} \tau ^2 & 0 & 0 & (\theta  -\theta ^{(1)}) \tau ^2 & 0 & 0 & 0 & 0 & 0 & 0 \\
 \tau ^2 & 0 & \theta +\tau ^6 & 0 & \tau ^4 & 0 & 0 & 0 & 0 & 0 & 0 & 0 \\
 0 & 0 & 0 & \theta  & 0 & 0 & 0 & 0 & 0 & 0 & 0 & 0 \\
 0 & 0 & (\theta-\theta ^{(1)}) \tau ^2 & 0 & \theta  & 0 & 0 & 0 & 0 & 0 & 0 & 0 \\
 0 & \tau ^4 & 0 & \tau ^2 & 0 & \theta+\tau^6  & 0 & 0 & 0 & 0 & 0 & 0 \\
 b & 0 & b \tau ^4 & 0 & b \tau ^2 & 0 & \theta  & 0 & -a \tau  & 0 & 0 & 0 \\
 0 & b & 0 & 0 & 0 & b \tau ^2 & \tau  & \theta  & 0 & 0 & 0 & \tau ^2 \\
 0 & 0 & b & 0 & 0 & 0 & 0 & \tau  & \theta  & 0 & 0 & 0 \\
 0 & b \tau ^2 & 0 & b & 0 & b\tau^4 & 0 & 0 & 0 & \theta  & 0 & 0 \\
 0 & 0 & b \tau ^2 & 0 & b & 0 & 0 & 0 & \tau ^2 & \tau  & \theta  & 0 \\
 0 & 0 & 0 & 0 & 0 & b & 0 & 0 & 0 & 0 & \tau  & \theta  \\
\end{array}
\right)\)
\end{doublespace}
\end{example}

To summarize, the correction concerns:
\begin{itemize}
    \item Algorithm~2, function \texttt{Reduce1}, in the case of strictly pure $t$-modules,
    \item the corresponding \emph{Mathematica} implementation in the Appendix,
    \item Example~4.1,
    \item the example in the Appendix.
\end{itemize}

All proofs and all conclusions of the paper remain correct.

\end{document}